\newtheorem{theorem}{Theorem}[section]
\newtheorem{lemma}[theorem]{Lemma}
\newtheorem{corollary}[theorem]{Corollary}
\newtheorem{remark}[theorem]{Remark}
\numberwithin{equation}{section}
\title{\bf Continuous dependence of the Cauchy problem for the inhomogeneous biharmonic NLS equation in Sobolev spaces}
\author{{JinMyong An, YuIl Jo and JinMyong Kim$^*$}\\
\footnotesize{Faculty of Mathematics, {\bf Kim Il Sung} University, Pyongyang, Democratic People's Republic of Korea}\\
\footnotesize{$^*$ Corresponding Author: JinMyong Kim(jm.kim0211@ryongnamsan.edu.kp)}}
\date{}
\begin{document}
\maketitle
\begin{abstract}
In this paper, we study the continuous dependence of the Cauchy problem for the inhomogeneous biharmonic nonlinear Schr\"{o}dinger (IBNLS) equation
\[iu_{t} +\Delta^{2} u=\lambda |x|^{-b}|u|^{\sigma}u,~u(0)=u_{0} \in H^{s} (\mathbb R^{d}),\]
in the standard sense in $H^s$, i.e. in the sense that the local solution flow is continuous $H^s\to H^s$. Here $d\in \mathbb N$, $s>0$, $\lambda\in \mathbb R$ and $\sigma>0$.
To arrive at this goal, we first obtain the estimates of the term $f(u)-f(v)$ in the fractional Sobolev spaces which generalize the similar results of An-Kim \cite{AKC22} and Dinh \cite{D18I}, where $f(u)$ is a nonlinear function that behaves like $\lambda |u|^{\sigma}u$ with $\lambda\in \mathbb R$. These estimates are then applied to obtain the standard continuous dependence result for IBNLS equation with $0<s <\min \{2+\frac{d}{2},\frac{3}{2}d\}$, $0<b<\min\{4,d,\frac{3}{2}d-s,\frac{d}{2}+2-s\}$ and $0<\sigma< \sigma_{c}(s)$, where $\sigma_{c}(s)=\frac{8-2b}{d-2s}$ if $s<\frac{d}{2}$, and $\sigma_{c}(s)=\infty$ if $s\ge \frac{d}{2}$. Our continuous dependence result generalizes that of Liu-Zhang \cite{LZ212} by extending the validity of $s$ and $b$.
\end{abstract}

\noindent \emph{Mathematics Subject Classification (2020)}. Primary 35Q55; Secondary 35B30.

\noindent \emph{Keywords}. Inhomogeneous biharmonic nonlinear Schr\"{o}dinger equation, Cauchy Problem, continuous dependence, subcritical.\\

\section{Introduction}\label{sec 1.}

In this paper, we consider the Cauchy problem for the inhomogeneous biharmonic nonlinear Schr\"{o}dinger (IBNLS) equation
\begin{equation} \label{GrindEQ__1_1_}
\left\{\begin{array}{l} {iu_{t} +\Delta^{2}u=\lambda |x|^{-b} |u|^{\sigma} u,~(t,x)\in \mathbb R\times \mathbb R^{d},}\\
{u(0,x)=u_{0}(x) \in H^{s}(\mathbb R^{d})}, \end{array}\right.
\end{equation}
where $d\in \mathbb N$, $s\ge 0$, $0<b<4$, $\sigma>0$ and $\lambda \in \mathbb R$.
The equation \eqref{GrindEQ__1_1_} may be seen as an inhomogeneous version of the classic biharmonic nonlinear Schr\"{o}dinger equation (also called fourth-order NLS equation),
\begin{equation} \label{GrindEQ__1_2_}
iu_{t} +\Delta^{2}u=\lambda |u|^{\sigma} u.
\end{equation}
The biharmonic nonlinear Schr\"{o}dinger equation \eqref{GrindEQ__1_2_} was introduced by Karpman \cite{K96} and Karpman-Shagalov \cite{KS97} to take into account the role of small fourth-order dispersion terms in the propagation of intense laser beams in a bulk medium with Kerr nonlinearity and it has attracted a lot of interest during the last two decades. See, for example, \cite{D18B,D18I,D21,GC07,LZ21,MZ16,P07,PX13} and the references therein.
The local and global well-posedness as well as scattering and blow-up in the energy space $H^{2}$ have been widely studied.
See \cite{D21,MZ16,P07,PX13} for example.
On the other hand, the local and global well-posedness in the fractional Sobolev spaces $H^s$ for the 4NLS equation \eqref{GrindEQ__1_1_} have also been studied by several authors. See \cite{D18B,D18I,GC07,LZ21} for example.

The equation \eqref{GrindEQ__1_1_} has a counterpart for the Laplacian operator, namely, the inhomogeneous nonlinear Schr\"{o}dinger (INLS) equation
\begin{equation} \label{GrindEQ__1_3_}
iu_{t} +\Delta u=\lambda |x|^{-b} |u|^{\sigma} u.
\end{equation}
The INLS equation \eqref{GrindEQ__1_3_} arises in nonlinear optics for modeling the propagation of laser beam and it has been widely studied by many authors. See, for example, \cite{AT21,AK211,AK212,AK23, AKC22, C21,DK21,GM21, MMZ21} and the references therein.

The IBNLS equation \eqref{GrindEQ__1_1_} is invariant under scaling $u_{\alpha}(t,x)=\alpha^{\frac{4-b}{\sigma}}u(\alpha^{4}t,\alpha x ),~\alpha >0$.
An easy computation shows that
$$
\left\|u_{\alpha}(t)\right\|_{\dot{H}^{s}}=\alpha^{s+\frac{4-b}{\sigma}-\frac{d}{2}}\left\|u(t)\right\|_{\dot{H}^{s}}.
$$
We thus define the critical Sobolev index
\begin{equation} \label{GrindEQ__1_4_}
s_{c}:=\frac{d}{2}-\frac{4-b}{\sigma}.
\end{equation}
Putting
\begin{equation} \label{GrindEQ__1_5_}
\sigma_{c}(s):=
\left\{\begin{array}{cl}
{\frac{8-2b}{d-2s},} ~&{{\rm if}~s<\frac{d}{2},}\\
{\infty,}~&{{\rm if}~s\ge \frac{d}{2},}
\end{array}\right.
\end{equation}
we can easily see that $s>s_{c}$ is equivalent to $\sigma<\sigma_{c}(s)$. If $s<\frac{d}{2}$, $s=s_{c}$ is equivalent to $\sigma=\sigma_{c}(s)$.
For initial data $u_{0}\in H^{s}(\mathbb R^{d})$, we say that the Cauchy problem \eqref{GrindEQ__1_1_} is $H^{s}$-critical (for short, critical) if $0\le s<\frac{d}{2}$ and $\sigma=\sigma_{c}(s)$.
If $s\ge 0$ and $\sigma<\sigma_{c}(s)$, the problem \eqref{GrindEQ__1_1_} is said to be  $H^{s}$-subcritical (for short, subcritical).
Especially, if $\sigma =\frac{8}{d-2s}$, the problem is known as $L^{2}$-critical or mass-critical.
If $\sigma =\frac{8-2b}{d-4}$ with $d\ge 5$, it is called $H^{2}$-critical or energy-critical.
Throughout the paper, a pair $(p,q)$ is said to be  admissible, for short $(p,q)\in A$, if
\begin{equation}\label{GrindEQ__1_6_}
\frac{2}{p}+\frac{d}{q}\le\frac{d}{2},~p\in [2,\infty],~q\in [2,\infty).
\end{equation}
Especially, we say that a pair $(p,q)$ is biharmonic Schr\"{o}dinger admissible (for short $(p,q)\in B$), if $(p,q)\in A$ with $\frac{4}{p}+\frac{d}{q}=\frac{d}{2}$. We also say that a pair $(p,q)$ is non-endpoint biharmonic Schr\"{o}dinger admissible (for short $(p,q)\in B_{0}$), if $(p,q)\in B$ with $p>2$. Note that $(p,q)\in B_{0}$ if, and only if, $\frac{4}{p}+\frac{d}{q}=\frac{d}{2}$ and
\begin{equation} \nonumber
\left\{\begin{array}{ll}
{2\le q< \frac{2d}{d-2}},~&{{\rm if}~d\ge 5,} \\
{2\le q<\infty} ,~&{{\rm if}~d\le 4.}
\end{array}\right.
\end{equation}
If $(p,q)\in A$ and $\frac{2}{p}+\frac{d}{q}=\frac{d}{2}$, then a pair $(p,q)$ is said to be Schr\"{o}dinger admissible (for short $(p,q)\in S$).
We also denote for $(p,q)\in [1,\infty]^{2}$,
\begin{equation}\label{GrindEQ__1_7_}
\gamma_{p,q}=\frac{d}{2}-\frac{4}{p}-\frac{d}{q}.
\end{equation}

The IBNLS equation \eqref{GrindEQ__1_1_} has attracted a lot of interest in recent years. See, for example, \cite{AKR22,ARK23,CG21, CGP22, GP20, GP22, LZ212, S21,S22,SG22} and the references therein.
Guzm\'{a}n-Pastor \cite{GP20} proved that \eqref{GrindEQ__1_1_} is locally well-posed in $L^{2}$, if $d\in \mathbb N$, $0<b<\min\left\{4,d\right\}$ and $0<\sigma<\sigma_{c}(0)$. They also established the local well-posedness in $H^{2}$ for $d\ge 3$, $0<b<\min\{4,\frac{d}{2}\}$, $\max\{0,\frac{2-2b}{d}\}<\sigma<\sigma_{c}(2)$.
Afterwards, Cardoso-Guzm\'{a}n-Pastor \cite{CGP22} established the local and global well-posedness in $\dot{H}^{s_{c}}\cap \dot{H}^{2}$ with $d\ge 5$, $0<s_{c}<2$, $0<b<\min\{4,\frac{d}{2}\}$ and $\max\{1,\frac{8-2b}{d}\}<\sigma< \frac{8-2b}{d-4}$.
Recently, Liu-Zhang \cite{LZ212} established the local well-posedness in $H^{s}$ with $0<s\le 2$ by using the Besov space theory.
More precisely, they proved that the IBNLS equation \eqref{GrindEQ__1_1_} is locally well-posed in $H^{s}$ if $d\in \mathbb N$, $0<s\le 2$, $0<\sigma<\sigma_{c}(s)$ and $0<b<\min\{4,\frac{d}{2}\}$. See Theorem 1.5 of \cite{LZ212} for details.
This result about the local well-posedness of \eqref{GrindEQ__1_1_} improves the one of \cite{GP20} by not only extending the validity of $d$ and $s$ but also removing the lower bound $\sigma>\frac{2-2b}{d}$.
This local well-posedness result is directly applied to obtain the global well-posedness result in $H^{2}$ for the mass-subcritical case $0<\sigma< \frac{8-2b}{d}$ and mass-critical case  $\sigma=\frac{8-2b}{d}$. See Corollary 1.10 of \cite{LZ212}. The global well-posedness and scattering in $H^{2}$ for the intercritical case $\frac{8-2b}{d}<\sigma<\sigma_{c}(2)$ have also been widely studied. See \cite{CG21, CGP22, GP20, GP22, S21,S22,SG22} for example.

Very recently, the authors in \cite{AKR22,ARK23} investigated the local and global well-posedness in the fractional Sobolev spaces $H^s$ with $0\le s <\min \{2+\frac{d}{2},\frac{3}{2}d\}$ for the IBNLS equation \eqref{GrindEQ__1_1_}. More precisely, they obtained the following local well-posedness result.
\begin{theorem}[\cite{AKR22}]\label{thm 1.1.}
Let $d\in \mathbb N$, $0\le s <\min \{2+\frac{d}{2},\frac{3}{2}d\}$, $0<b<\min\{4,d,\frac{3}{2}d-s,\frac{d}{2}+2-s\}$  and $0<\sigma<\sigma_{c}(s)$.
If $\sigma$ is not an even integer, assume that
\footnote[1]{Given $a\in \mathbb R$, $a^{+}$ denotes the fixed number slightly larger than $a$ ($a^{+}=a+\varepsilon$ with $\varepsilon>0$ small enough).}
\begin{equation} \label{GrindEQ__1_8_}
\sigma\ge \sigma_{\star}(s):=
\left\{\begin{array}{ll}
{0,}~&{{\rm if}~{d\in \mathbb N ~{\rm and}~s\le 1,}}\\
{\left[ s-\frac{d}{2}\right]},~&{{\rm if}~d=1,2 ~{\rm and}~ s\ge 1+\frac{d}{2}},\\
{\lceil s\rceil-2},~&{{\rm if}~d\ge 3~{\rm and}~s>2},\\
{\left(\frac{2s-2b-2}{d}\right)^{+}},~&{\rm otherwise.}
\end{array}\right.
\end{equation}
 Then for any $u_{0} \in H^{s}$, there exist $T_{\max}=T_{\max } \left(\left\|u_{0} \right\| _{H^{s} } \right)>0$, $T_{\min}=T_{\min } \left(\left\|u_{0} \right\| _{H^{s} } \right)>0$ and a unique, maximal solution of \eqref{GrindEQ__1_1_} satisfying
\begin{equation} \label{GrindEQ__1_9_}
u\in C\left(\left(-T_{\min},T_{\max} \right),H^{s} \right)\cap L_{\rm loc}^{p} \left(\left(-T_{\min },T_{\max} \right),H_{q}^{s} \right),
\end{equation}
for any $(p,q)\in B$. If $s>0$, then the solution of \eqref{GrindEQ__1_1_} depends continuously on the initial data $u_{0} $ in the following sense. There exists $0<T<T_{\max } ,\, T_{\min } $ such that if $u_{0,n} \to u_{0} $ in $H^{s}$ and if $u_{n} $ denotes the solution of \eqref{GrindEQ__1_1_} with the initial data $u_{0,n}$, then $0<T<T_{\max } \left(u_{0,n} \right),T_{\min } \left(u_{0,n}\right)$ for all sufficiently large $n$ and $u_{n} \to u$ in $L^{p} \left([-T,T],H_{q}^{s-\varepsilon } \right)$ as $n\to \infty $ for all $\varepsilon >0$ and all $(p,q)\in B$. In particular, $u_{n} \to u$ in $C\left([-T,T],H^{s-\varepsilon } \right)$ for all $\varepsilon >0$.
\end{theorem}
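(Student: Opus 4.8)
The plan is to solve the Duhamel formulation
\[
u(t) = e^{it\Delta^2}u_0 - i\lambda \int_0^t e^{i(t-\tau)\Delta^2}\bigl(|x|^{-b}|u|^\sigma u\bigr)(\tau)\,d\tau
\]
by a contraction mapping argument built from Strichartz norms. First I would fix a finite family of biharmonic admissible pairs in $B_0$ and work on a ball
\[
X=\bigl\{u:\ \|u\|_{L^\infty_t H^s_x}+\|u\|_{L^p_t H_q^s}\le M\bigr\}
\]
over $[-T,T]$, endowed with the \emph{weaker} metric $d(u,v)=\|u-v\|_{L^\infty_t L^2_x}+\|u-v\|_{L^p_t L^q_x}$, following the Cazenave--Weissler/Kato scheme: the full $H^s$ regularity is retained only as an a priori bound, while the contraction itself is run in the derivative-free norm. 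This is what makes the fixed point close despite the merely Hölder-continuous nonlinearity.

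The two analytic inputs are the Strichartz estimates for the biharmonic propagator $e^{it\Delta^2}$ --- the homogeneous estimate $\|e^{it\Delta^2}\varphi\|_{L^p_t L^q_x}\lesssim\|\varphi\|_{L^2}$ for $(p,q)\in B$, the inhomogeneous (retarded) counterpart using the gain exponents $\gamma_{p,q}$, and their $H^s$-versions obtained by commuting $\langle\nabla\rangle^s$ through --- and the nonlinear estimates for $f(u)=\lambda|x|^{-b}|u|^\sigma u$. The latter is the crux: I would bound $\|f(u)\|_{L^{p'}_t H_{q'}^s}$ and the difference $\|f(u)-f(v)\|_{L^{p'}_t L^{q'}_x}$ in terms of the $X$-norms. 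To handle the singular weight $|x|^{-b}$ I would split $\mathbb R^d=\{|x|<1\}\cup\{|x|\ge 1\}$: near the origin absorb $|x|^{-b}$ via a fractional Hardy inequality and Sobolev embedding (this is where $b<\min\{d,\tfrac{3}{2}d-s\}$ and $b<\tfrac{d}{2}+2-s$ enter), and away from the origin treat $|x|^{-b}$ as a bounded multiplier. The power $|u|^\sigma u$ is controlled by the fractional Leibniz rule together with the fractional chain rule for the Hölder map $z\mapsto|z|^\sigma z$, which forces the regularity threshold $\sigma\ge\sigma_\star(s)$ and the subcriticality $\sigma<\sigma_c(s)$, the exponents being balanced so that every output lands on a Strichartz-dual pair.

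\textbf{Main obstacle.} I expect the genuine difficulty to lie entirely in these weighted nonlinear estimates in fractional $H_q^s$: one must simultaneously (i) place the output on a dual pair $(p',q')$ with $(p,q)\in B$, (ii) keep $|x|^{-b}$ integrable after the fractional derivatives act, and (iii) stay below the Hölder regularity of $|u|^\sigma u$. The interplay of these three constraints is precisely what pins down the admissible ranges of $s$, $b$, $\sigma$, and making the borderline cases (for instance $s$ near $\tfrac{3}{2}d$ or $\tfrac{d}{2}+2$) close is the delicate point.

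Once the nonlinear estimates are in hand, the contraction follows for $T=T(\|u_0\|_{H^s})$ small in the subcritical regime, yielding a unique solution in $X$; persistence $u\in C((-T_{\min},T_{\max}),H^s)$ and the blow-up alternative defining $T_{\max},T_{\min}$ come from the standard continuation argument, iterating the local estimate as long as the $H^s$-norm stays bounded. For continuous dependence, the contraction already gives stability in the weak metric, so $u_{0,n}\to u_0$ in $H^s$ yields $u_n\to u$ in $L^p_t L^q_x$ and $C_t L^2_x$, together with the uniform bound $\sup_n\|u_n\|_{L^\infty_t H^s_x}\le C$. Interpolating this $L^q$-convergence against the uniform bound in $H_q^s$ upgrades it to convergence in $L^p_t H_q^{s-\varepsilon}$ for every $\varepsilon>0$, hence to $C_t H^{s-\varepsilon}$. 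The unavoidable $\varepsilon$-loss reflects exactly that $z\mapsto|z|^\sigma z$ is only Hölder, not Lipschitz, at the top regularity --- and removing this loss is what the present paper is devoted to.
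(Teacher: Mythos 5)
This theorem is not actually proved in the paper---it is quoted from \cite{AKR22}---but your plan follows essentially the same route as that proof, which is also mirrored in the paper's own Section \ref{sec 4.}: Duhamel plus biharmonic Strichartz estimates, a near/far-origin splitting of the weight $|x|^{-b}$ (cf.\ Remark \ref{rem 4.1.}), fractional chain/product rules for $|u|^{\sigma}u$ enforcing $\sigma\ge\sigma_{\star}(s)$, a contraction run in the weaker derivative-free metric \`{a} la Cazenave--Weissler/Kato, and the $\varepsilon$-loss in continuous dependence recovered by interpolating $L^{q}$-convergence against uniform $H_{q}^{s}$ bounds. The one cosmetic difference is that away from the origin the weight is not treated as a mere bounded multiplier: when derivatives fall on it one needs $(1-\chi(x))|x|^{-b}\in L^{\gamma}\cap\dot{H}_{\gamma}^{s'}$ for suitable finite $\gamma$ (i.e.\ its decay, via Remark \ref{rem 4.1.}), which is exactly how the exponent bookkeeping is closed there.
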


But the continuous dependence of the solution on the initial data in the sense of Theorem \ref{thm 1.1.} is weaker than what would be ``standard", i.e. $\varepsilon=0$. (see e.g. \cite{C03}).
As in the study of the classic NLS equation (cf. \cite{CFH11,DYC13}), INLS equation (cf. \cite{AKC22}) and the classic biharmonic NLS equation (cf. \cite{D18B,D18I,LZ21}), we wonder if the solution of the Cauchy problem for the IBNLS equation \eqref{GrindEQ__1_1_} depends on the initial data in the standard sense in $H^s$, i.e. in the sense that the local solution flow is continuous $H^s\to H^s$.

This paper aims to solve this problem. The continuous dependence of the Cauchy problem \eqref{GrindEQ__1_1_} in the standard sense in $H^s$  was already investigated by \cite{LZ212}. However, they only treated the case $s\le 2$ and $b<\frac{d}{2}$. In this paper, we study the continuous dependence of the Cauchy problem for the IBNLS equation \eqref{GrindEQ__1_1_} in the standard sense in $H^s$ with $0<s<\min \{2+\frac{d}{2},\frac{3}{2}d\}$.

The main ingredients of this paper are the estimates of $f(u)-f(v)$ in the fractional Sobolev spaces established in Section \ref{sec 3.}, where $f(u)$ is a nonlinear function which behaves like $\lambda|u|^{\sigma}u$ with $\lambda\in \mathbb R$. These estimates improve the similar results of \cite{AKC22,D18I} (see Lemmas 3.1--3.4, 3.8--3.11 of \cite{AKC22} and Corollary 3.5 of \cite{D18I}) and play a crucial role in proving the standard continuous dependence of the Cauchy problem for the IBNLS equation \eqref{GrindEQ__1_1_}. They can also be applied to improve the continuous dependence results on the similar equations such as the classic biharmonic NLS equation and the fractional NLS equation. In this paper, we only focus on the IBNLS equation \eqref{GrindEQ__1_1_}. Our main result is as follows.
\begin{theorem}\label{thm 1.2.}
Let $d\in \mathbb N$, $0< s <\min \{2+\frac{d}{2},\frac{3}{2}d\}$, $0<b<\min\{4,d,\frac{3}{2}d-s,\frac{d}{2}+2-s\}$  and $0<\sigma<\sigma_{c}(s)$.
If $\sigma$ is not an even integer, assume that
\begin{equation} \label{GrindEQ__1_10_}
\sigma\ge \tilde{\sigma}_{\star}(s):=
\left\{\begin{array}{ll}
{\left[ s-\frac{d}{2}\right]},~&{{\rm if}~d=1,2 ~{\rm and}~ s\ge 1+\frac{d}{2}},\\
{\lceil s\rceil-2},~&{{\rm if}~d\ge 3~{\rm and}~s>2},\\
{1},~&{\rm otherwise.}
\end{array}\right.
\end{equation}
 Then for any $u_{0} \in H^{s}$, then the corresponding solution $u$ of the IBNLS equation \eqref{GrindEQ__1_1_} depends continuously on the initial data $u_{0}$ in the following sense. For any interval $[-S,T]\subset (T_{\min }(u_0), T_{\max }(u_0)$, and every $(p,q)\in B$, if $u_{0,n} \to u_{0}$ in $H^{s}$ and if $u_{n} $ denotes the solution of \eqref{GrindEQ__1_1_} with the initial data $u_{0,n}$, then $u_{n} \to u$ in $L^{p} \left([-S,T],H_{q}^{s}\right)$ as $n\to \infty $. In particular, $u_{n} \to u$ in $C\left([-S,T],H^{s} \right)$. In addition, if $\sigma$ is an even integer, or if not we assume $\sigma\ge \hat{\sigma}_{\star}(s)$, then the dependence is locally Lipschitz, where
\begin{equation} \label{GrindEQ__1_11_}
\hat{\sigma}_{\star}(s):=
\left\{\begin{array}{ll}
{\left[s-\frac{d}{2}\right]+1},~&{{\rm if}~d=1,2 ~{\rm and}~ s\ge 1+\frac{d}{2}},\\
{\lceil s\rceil-1},~&{{\rm if}~d\ge 3~{\rm and}~s>2},\\
{1},~&{\rm otherwise.}
\end{array}\right.
\end{equation}
\end{theorem}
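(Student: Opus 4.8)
The plan is to upgrade the weak continuous dependence of Theorem~\ref{thm 1.1.} (convergence in $L^p([-S,T],H_q^{s-\varepsilon})$) to the standard one ($\varepsilon=0$) by running a Kato-type argument at the top regularity level $s$, in which the loss-free estimates for $f(u)-f(v)$ established in Section~\ref{sec 3.} replace the usual difference estimates. Fix $[-S,T]\subset(-T_{\min}(u_0),T_{\max}(u_0))$ and write $f(u)=\lambda|x|^{-b}|u|^{\sigma}u$. Given $u_{0,n}\to u_0$ in $H^s$, Theorem~\ref{thm 1.1.} guarantees that for $n$ large the solutions $u_n$ exist on $[-S,T]$, are uniformly bounded in $L^p([-S,T],H_q^{s})$ for every $(p,q)\in B$, and satisfy $u_n\to u$ in $L^p([-S,T],H_q^{s-\varepsilon})$ for all $\varepsilon>0$; in particular, after a Sobolev embedding, $u_n\to u$ in the auxiliary strictly subcritical space-time norms that carry no derivatives. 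The whole difficulty is thus concentrated in closing the estimate at the exact regularity $s$.

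I would work from the Duhamel difference
\[
u_n(t)-u(t)=e^{it\Delta^2}(u_{0,n}-u_0)-i\int_0^t e^{i(t-\tau)\Delta^2}\bigl(f(u_n(\tau))-f(u(\tau))\bigr)\,d\tau,
\]
and apply the homogeneous and inhomogeneous Strichartz estimates for the biharmonic group $e^{it\Delta^2}$ along biharmonic admissible pairs $(p,q)\in B$. This reduces matters to bounding $f(u_n)-f(u)$ in the dual Strichartz space at level $s$, where the key input is the Section~\ref{sec 3.} estimate. I expect it to split the top-order contribution as
\[
\bigl\|f(u_n)-f(u)\bigr\|_{N}\lesssim\bigl(\|u_n\|^{\sigma}+\|u\|^{\sigma}\bigr)\,\|u_n-u\|_{L^p([-S,T],H_q^{s})}+R_n,
\]
where $N$ denotes the relevant dual Strichartz space, the prefactor is measured in a subcritical auxiliary norm, and the remainder $R_n$ involves only \emph{lower}-order differences of $u_n$ and $u$, so that $R_n\to0$ by the weak convergence above. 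The subcritical hypothesis $\sigma<\sigma_c(s)$ is what forces the auxiliary norm to sit at a non-endpoint exponent $p<\infty$, and hence to be arbitrarily small on short time subintervals by absolute continuity of the integral.

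The final step is a chaining and absorption argument: I would partition $[-S,T]$ into finitely many subintervals $I_1,\dots,I_M$ on each of which the auxiliary norm of $u$ is so small that the main term is absorbed into the left-hand side. On $I_1$ this yields $\|u_n-u\|_{L^p(I_1,H_q^{s})}\lesssim\|u_{0,n}-u_0\|_{H^s}+R_n\to0$, and in particular $\|u_n-u\|_{C(I_1,H^s)}\to0$, so the endpoint values converge in $H^s$ and the argument restarts on $I_2$ with these as data. After $M$ steps one obtains $u_n\to u$ in $L^p([-S,T],H_q^{s})$ for every $(p,q)\in B$, hence in $C([-S,T],H^s)$. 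For the Lipschitz assertion, when $\sigma$ is an even integer (so $f$ is smooth) or $\sigma\ge\hat{\sigma}_{\star}(s)$, the Section~\ref{sec 3.} estimates instead bound $R_n$ directly by $C\|u_{0,n}-u_0\|_{H^s}$ with no derivative loss, and the same absorption gives $\|u_n-u\|_{L^p([-S,T],H_q^{s})}\lesssim\|u_{0,n}-u_0\|_{H^s}$.

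The main obstacle will be the $f(u)-f(v)$ estimate itself at the exact level $s$: combining a fractional Leibniz decomposition with a fractional chain rule so that $u-v$ is peeled off at a strictly lower regularity (to feed $R_n$) while the full $s$ derivative lands on a single factor, all in the presence of the singular weight $|x|^{-b}$. Controlling the weight forces weighted and Hardy-type fractional inequalities, which is exactly where the restrictions $0<b<\min\{4,d,\tfrac32 d-s,\tfrac d2+2-s\}$ enter; and when $\sigma$ is not an even integer the threshold $\sigma\ge\tilde{\sigma}_{\star}(s)$ is precisely what ensures that $z\mapsto|z|^{\sigma}z$ is differentiable enough for the chain rule to reach order $s$. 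Once these estimates are in hand, as established in Section~\ref{sec 3.}, the Strichartz and absorption machinery above is routine.
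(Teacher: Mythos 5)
Your proposal is correct and follows essentially the same route as the paper: Duhamel plus biharmonic Strichartz estimates, the Section~\ref{sec 3.} difference estimates (combined with the cutoff decomposition of $|x|^{-b}$ in Lemmas \ref{lem 4.2.}--\ref{lem 4.3.}) yielding a main term at regularity $s$ that is absorbed on short time intervals plus a lower-regularity H\"older remainder that vanishes by the $H^{s-\varepsilon}$ convergence of Theorem \ref{thm 1.1.}, then chaining over $[-S,T]$, with the case $\sigma\ge\hat{\sigma}_{\star}(s)$ (or $\sigma$ an even integer) giving the purely Lipschitz estimate. The only cosmetic differences are that the paper gets smallness from a factor $|I|^{\theta}$, $\theta>0$, rather than from absolute continuity of an auxiliary norm, and handles the weight by splitting $|x|^{-b}$ with a cutoff and Remark \ref{rem 4.1.} rather than by Hardy-type inequalities.
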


Let us compare Theorem \ref{thm 1.2.} with the result of \cite{LZ212}.

First, Liu-Zhang \cite{LZ212} only considered the case $s\le 2$. As an immediate consequence of Theorem \ref{thm 1.2.}, we have the following new continuous dependence result in $H^{s}(\mathbb R^{d})$ with $d\ge 2$ and $2<s<2+\frac{d}{2}$.
\begin{corollary}\label{cor 1.3.}
Let $d\ge 2$, $2<s<2+\frac{d}{2}$, $0<b<\min\{4,\frac{d}{2}+2-s\}$ and $0<\sigma<\sigma_{c}(s)$.
If $\sigma$ is not an even integer, assume further that
\begin{equation} \label{GrindEQ__1_12_}
\left\{\begin{array}{ll}
{\sigma\ge\left[ s-\frac{d}{2}\right]},~&{{\rm if}~d=2,}\\
{\sigma\ge\lceil s\rceil-2},~&{{\rm if}~d\ge 3.}\\
\end{array}\right.
\end{equation}
Then for any $u_{0} \in H^{s}$, then the corresponding solution of the IBNLS equation \eqref{GrindEQ__1_1_} depends continuously on the initial data $u_{0}$ in the sense of Theorem \ref{thm 1.2.}. In addition, if $\sigma$ is an even integer, or if not we assume
\begin{equation} \label{GrindEQ__1_13_}
\left\{\begin{array}{ll}
{\sigma\ge\left[ s-\frac{d}{2}\right]+1},~&{{\rm if}~d=2,}\\
{\sigma\ge\lceil s\rceil-1},~&{{\rm if}~d\ge 3.}\\
\end{array}\right.
\end{equation}
 then the dependence is locally Lipschitz.
\end{corollary}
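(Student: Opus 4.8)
The plan is to deduce Corollary \ref{cor 1.3.} directly from Theorem \ref{thm 1.2.}, since the corollary is merely the specialization of the theorem to the range $d\ge 2$ and $2<s<2+\frac{d}{2}$. Accordingly, the entire task is to check that, under the restricted hypotheses of the corollary, all of the constraints on $s$, $b$ and $\sigma$ demanded by Theorem \ref{thm 1.2.} are satisfied, and then to invoke that theorem verbatim. No new analysis is needed; the work is purely the simplification of the $\min$-bounds and the identification of the correct branches of $\tilde{\sigma}_\star$ and $\hat{\sigma}_\star$.

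First I would settle the admissible range of $s$. Since $2+\frac{d}{2}\le \frac{3}{2}d$ is equivalent to $d\ge 2$, we have $\min\{2+\frac{d}{2},\frac{3}{2}d\}=2+\frac{d}{2}$ for every $d\ge 2$. Hence the corollary's assumption $2<s<2+\frac{d}{2}$ is exactly a subinterval of the range $0<s<\min\{2+\frac{d}{2},\frac{3}{2}d\}$ required by Theorem \ref{thm 1.2.}. Next I would reconcile the two bounds on $b$: it suffices to show that the corollary's bound $\min\{4,\frac{d}{2}+2-s\}$ equals the theorem's bound $\min\{4,d,\frac{3}{2}d-s,\frac{d}{2}+2-s\}$. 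Here $\frac{d}{2}+2-s\le d$ is equivalent to $s\ge 2-\frac{d}{2}$, which holds since $s>2$, and $\frac{d}{2}+2-s\le \frac{3}{2}d-s$ is equivalent to $d\ge 2$. Thus $\frac{d}{2}+2-s$ is the minimum of $\{d,\frac{3}{2}d-s,\frac{d}{2}+2-s\}$, the two bounds agree, and any $b$ admissible for the corollary is admissible for the theorem.

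Finally I would match the lower bounds on $\sigma$ by selecting the appropriate branch of $\tilde{\sigma}_\star(s)$ and $\hat{\sigma}_\star(s)$. For $d=2$ one has $1+\frac{d}{2}=2<s$, so $s\ge 1+\frac{d}{2}$ and the first branch applies, giving $\tilde{\sigma}_\star(s)=\left[s-\frac{d}{2}\right]$ and $\hat{\sigma}_\star(s)=\left[s-\frac{d}{2}\right]+1$. For $d\ge 3$ one has $s>2$, so the second branch applies, giving $\tilde{\sigma}_\star(s)=\lceil s\rceil-2$ and $\hat{\sigma}_\star(s)=\lceil s\rceil-1$. These are precisely the quantities in \eqref{GrindEQ__1_12_} and \eqref{GrindEQ__1_13_}, while the subcriticality requirement $0<\sigma<\sigma_c(s)$ and the exemption of even-integer $\sigma$ from any lower bound are identical in both statements.

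With all hypotheses thereby verified, Theorem \ref{thm 1.2.} yields the standard continuous dependence in $L^{p}([-S,T],H^{s}_{q})$ for every $(p,q)\in B$, hence in $C([-S,T],H^{s})$, and, under the stronger condition \eqref{GrindEQ__1_13_} or for even-integer $\sigma$, the local Lipschitz dependence. I expect no genuine analytic obstacle here; the only point requiring care is the $d$-dependent case split when selecting the branches of $\tilde{\sigma}_\star$ and $\hat{\sigma}_\star$, together with confirming that the single bound $\min\{4,\frac{d}{2}+2-s\}$ indeed absorbs the extra constraints $d$ and $\frac{3}{2}d-s$ throughout the range $2<s<2+\frac{d}{2}$.
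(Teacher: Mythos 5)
Your proposal is correct and follows exactly the paper's route: the paper presents Corollary \ref{cor 1.3.} as an immediate specialization of Theorem \ref{thm 1.2.}, and your verification that $\min\{2+\frac{d}{2},\frac{3}{2}d\}=2+\frac{d}{2}$ and $\min\{4,d,\frac{3}{2}d-s,\frac{d}{2}+2-s\}=\min\{4,\frac{d}{2}+2-s\}$ for $d\ge 2$, $s>2$, together with the correct branch selection in $\tilde{\sigma}_\star(s)$ and $\hat{\sigma}_\star(s)$, is precisely the (implicit) content of that reduction. Nothing is missing.
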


When $0<s<2$, the continuous dependence result of \cite{LZ212} only covers the case $b<\min\{4,\frac{d}{2}\}$. As an immediate consequence of Theorem \ref{thm 1.2.}, we also have the following new result, which covers the case $\frac{d}{2}\le b< 4$ with $d<8$.

\begin{corollary}\label{cor 1.4.}
Let $d<8$, $0<s<\min\{2,d\}$, $\frac{d}{2}\le b<\min\{4,d,\frac{3}{2}d-s,\frac{d}{2}+2-s\}$ and $1\le \sigma<\sigma_{c}(s)$.
Then for any $u_{0} \in H^{s}$, then the corresponding solution of the IBNLS equation \eqref{GrindEQ__1_1_} depends continuously on the initial data $u_{0}$ in the standard sense in $H^s$.
\end{corollary}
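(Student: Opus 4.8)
The plan is to derive Corollary \ref{cor 1.4.} directly from Theorem \ref{thm 1.2.}: I would show that the hypotheses imposed here form a special case of those in the theorem, with the extra constraints $d<8$ and $\frac{d}{2}\le b$ merely carving out a nonempty subregion of the admissible parameters. Since the corollary asserts only continuous dependence (not the Lipschitz refinement), it suffices to verify the hypotheses guaranteeing the first conclusion of Theorem \ref{thm 1.2.}.

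First I would check the range conditions. From $0<s<\min\{2,d\}$ we get $s<2\le 2+\frac{d}{2}$ and $s<d\le\frac{3}{2}d$, so the requirement $0<s<\min\{2+\frac{d}{2},\frac{3}{2}d\}$ holds. The upper bound on $b$ is verbatim the one in Theorem \ref{thm 1.2.}, and the constraint $\sigma<\sigma_{c}(s)$ is identical in both statements, so nothing is lost there. The only point to confirm about the lower bound $\frac{d}{2}\le b$ is consistency: the interval $\big[\frac{d}{2},\,\min\{4,d,\frac{3}{2}d-s,\frac{d}{2}+2-s\}\big)$ is nonempty exactly because $d<8$ gives $\frac{d}{2}<4$, while $\frac{d}{2}<d$, $\frac{d}{2}<\frac{3}{2}d-s$ (as $s<d$) and $\frac{d}{2}<\frac{d}{2}+2-s$ (as $s<2$) are automatic. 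This is precisely what the hypotheses $d<8$ and $s<\min\{2,d\}$ encode.

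The one substantive step is to evaluate $\tilde{\sigma}_{\star}(s)$ from \eqref{GrindEQ__1_10_} in this regime. Since $s<\min\{2,d\}\le 2$, the branch $d\ge 3,\ s>2$ never occurs; and the branch $d=1,2,\ s\ge 1+\frac{d}{2}$ is also never triggered, because for $d=1$ we have $s<1<\frac{3}{2}$ and for $d=2$ we have $s<2=1+\frac{d}{2}$. Hence we always land in the ``otherwise'' case, giving $\tilde{\sigma}_{\star}(s)=1$. As the corollary assumes $\sigma\ge 1$, the condition $\sigma\ge\tilde{\sigma}_{\star}(s)$ required when $\sigma$ is not an even integer is automatically satisfied (and no lower bound is needed when $\sigma$ is an even integer). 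With every hypothesis of Theorem \ref{thm 1.2.} thus verified, its conclusion yields continuous dependence in the standard sense in $H^{s}$. I expect no genuine difficulty beyond this bookkeeping; the value of the corollary is simply that it makes explicit the admissibility of the previously uncovered range $\frac{d}{2}\le b<4$ with $d<8$, which lies outside the range $b<\frac{d}{2}$ treated in \cite{LZ212}.
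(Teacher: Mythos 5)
Your proposal is correct and takes exactly the paper's route: the paper presents Corollary \ref{cor 1.4.} as an immediate consequence of Theorem \ref{thm 1.2.}, and your verification that the hypotheses specialize correctly (in particular that $s<\min\{2,d\}$ forces the ``otherwise'' branch of \eqref{GrindEQ__1_10_}, so $\tilde{\sigma}_{\star}(s)=1$ and the assumption $\sigma\ge 1$ suffices, while $d<8$ ensures $\frac{d}{2}<4$ so the range of $b$ is nonempty) is precisely the bookkeeping the paper leaves implicit.
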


Combining Corollaries \ref{cor 1.3.} and \ref{cor 1.4.} directly with the continuous dependence result of \cite{LZ212}, we obtain the following standard continuous dependence results for the IBNLS equation \eqref{GrindEQ__1_1_}.
\begin{corollary}\label{cor 1.5.}
Let $d\ge 2$, $0<s<2+\frac{d}{2}$, $0<b<\min\{4,d,\frac{d}{2}+2-s\}$ and $0<\sigma<\sigma_{c}(s)$.
If $\sigma$ is not an even integer, assume further that
\begin{equation} \label{GrindEQ__1_14_}
\left\{\begin{array}{ll}
{\sigma\ge\left[ s-\frac{d}{2}\right]},~&{{\rm if}~d=2~{\rm and}~s>2,}\\
{\sigma\ge\lceil s\rceil-2},~&{{\rm if}~d\ge 3~{\rm and}~s>2,}\\
{\sigma\ge1},~&{{\rm if}~s< 2~{\rm and}~b\ge \frac{d}{2},}\\
{\sigma>0},~&{{\rm otherwise}.}\\
\end{array}\right.
\end{equation}
Then for any $u_{0} \in H^{s}$, then the corresponding solution of the IBNLS equation \eqref{GrindEQ__1_1_} depends continuously on the initial data $u_{0}$ in the standard sense in $H^s$.
\end{corollary}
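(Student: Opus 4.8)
The plan is to prove Corollary~\ref{cor 1.5.} by a pure covering (case-analysis) argument, with no new analytic input. The key observation is that the parameter region described by the hypotheses of Corollary~\ref{cor 1.5.} is exactly the union of three regions already settled: the region of Corollary~\ref{cor 1.3.} (for $s>2$), the region of Corollary~\ref{cor 1.4.} (for $s<2$ and $b\ge\frac{d}{2}$), and the region of the standard continuous dependence theorem of Liu--Zhang~\cite{LZ212} (for $s\le 2$ and $b<\frac{d}{2}$). Since each of these three results already delivers continuous dependence in the standard sense in $H^s$, it suffices to verify that every tuple $(d,s,b,\sigma)$ allowed by Corollary~\ref{cor 1.5.} satisfies the full hypotheses of at least one of them, with the side conditions on $\sigma$ matching up. I would organize the verification according to the size of $s$ relative to $2$.

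First I would record two elementary consequences of the standing assumptions $d\ge 2$ and $0<b<\frac{d}{2}+2-s$. On the one hand, if $s\ge 2$ then $b<\frac{d}{2}+2-s\le\frac{d}{2}$, so $b<\frac{d}{2}$ is automatic; hence the branch $b\ge\frac{d}{2}$ can occur only together with $s<2$. On the other hand, since $d\ge 2$ gives $\frac{d}{2}+2\le\frac{3}{2}d$, we have $b<\frac{d}{2}+2-s\le\frac{3}{2}d-s$, so the constraint $b<\frac{3}{2}d-s$ appearing in Corollary~\ref{cor 1.4.} is never an extra restriction. These two remarks are what make the covering clean.

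Next I would run the case analysis. If $2<s<2+\frac{d}{2}$: by the first remark $b<\frac{d}{2}$, so in particular $b<\min\{4,\frac{d}{2}+2-s\}$, and the side condition~\eqref{GrindEQ__1_14_} reads exactly as~\eqref{GrindEQ__1_12_}; hence Corollary~\ref{cor 1.3.} applies. If $s=2$, then $b<\frac{d}{2}+2-s=\frac{d}{2}$, so $b<\min\{4,\frac{d}{2}\}$ and~\eqref{GrindEQ__1_14_} reduces to $\sigma>0$, which is precisely the regime of the Liu--Zhang theorem~\cite{LZ212}. If $0<s<2$ and $b<\frac{d}{2}$, the same Liu--Zhang theorem applies (here $b<\min\{4,\frac{d}{2}\}$ again). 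Finally, if $0<s<2$ and $\frac{d}{2}\le b<4$, I would check the hypotheses of Corollary~\ref{cor 1.4.}: the inequality $\frac{d}{2}\le b<4$ forces $d<8$; $d\ge 2$ gives $s<2=\min\{2,d\}$; the bound $b<\frac{3}{2}d-s$ is free by the second remark; and~\eqref{GrindEQ__1_14_} gives $\sigma\ge 1$ (while an even integer $\sigma$ is automatically $\ge 2\ge 1$), so $1\le\sigma<\sigma_c(s)$ as required. Thus Corollary~\ref{cor 1.4.} applies.

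Since these four cases exhaust $0<s<2+\frac{d}{2}$ and each invokes a result providing standard continuous dependence, the corollary follows. I do not expect a genuine obstacle here: the argument is entirely bookkeeping. The only points that require care—and where a careless combination could leave a gap—are the two reductions isolated above, namely that $b<\frac{d}{2}$ is forced once $s\ge 2$ (so that $b\ge\frac{d}{2}$ is compatible only with $s<2$, the exact domain of Corollary~\ref{cor 1.4.}) and that the auxiliary constraint $b<\frac{3}{2}d-s$ is implied by $b<\frac{d}{2}+2-s$ when $d\ge 2$. Checking that the $\sigma$-thresholds in~\eqref{GrindEQ__1_14_} coincide case by case with those of Corollaries~\ref{cor 1.3.} and~\ref{cor 1.4.}, and degenerate to $\sigma>0$ in the Liu--Zhang range, is the remaining, purely arithmetic, verification.
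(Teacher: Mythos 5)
Your proposal is correct and is exactly the paper's argument: the paper derives Corollary \ref{cor 1.5.} by "combining Corollaries \ref{cor 1.3.} and \ref{cor 1.4.} directly with the continuous dependence result of \cite{LZ212}," which is precisely your covering by the cases $s>2$ (Corollary \ref{cor 1.3.}), $s\le 2$ with $b<\frac{d}{2}$ (Liu--Zhang), and $s<2$ with $b\ge\frac{d}{2}$ (Corollary \ref{cor 1.4.}). Your two preliminary remarks (that $s\ge 2$ forces $b<\frac{d}{2}$, and that $b<\frac{d}{2}+2-s$ implies $b<\frac{3}{2}d-s$ when $d\ge 2$) supply the bookkeeping the paper leaves implicit, and they check out.
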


\begin{corollary}\label{cor 1.6.}
Let $d=1$, $0< s\le 2$, $0<b<\tilde{4}$ and $0<\sigma<\sigma_{c}(s)$, where
\begin{equation} \label{GrindEQ__1_15_}
\tilde{4}=
\left\{\begin{array}{ll}
{\min\{1,\frac{3}{2}-s\}},~&{{\rm if}~s<1,}\\
{\frac{1}{2}},~&{{\rm if}~s\ge 1.}\\
\end{array}\right.
\end{equation}
If $\sigma$ is not an even integer, assume further that
\begin{equation} \label{GrindEQ__1_16_}
\left\{\begin{array}{ll}
{\sigma\ge1},~&{{\rm if}~s<1~{\rm and}~b\ge\frac{1}{2},}\\
{\sigma>0},~&{{\rm otherwise}.}\\
\end{array}\right.
\end{equation}
Then for any $u_{0} \in H^{s}$, then the corresponding solution of the IBNLS equation \eqref{GrindEQ__1_1_} depends continuously on the initial data $u_{0}$ in the standard sense in $H^s$.
\end{corollary}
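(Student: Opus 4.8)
The plan is to reduce Corollary~\ref{cor 1.6.} entirely to already-available results by a dichotomy on the size of $b$ relative to $\frac{d}{2}=\frac12$; since Corollary~\ref{cor 1.3.} requires $d\ge2$, in dimension one only Corollary~\ref{cor 1.4.} and the Liu--Zhang theorem \cite{LZ212} are relevant. The standard continuous dependence theorem of \cite{LZ212} covers precisely the range $d=1$, $0<s\le 2$, $0<\sigma<\sigma_c(s)$ and $0<b<\min\{4,\frac{d}{2}\}=\frac12$, with no lower bound imposed on $\sigma$. Hence for $0<b<\frac12$ there is nothing new to prove: the conclusion is immediate from \cite{LZ212}, and this region is exactly the one on which the nonlinearity hypothesis \eqref{GrindEQ__1_16_} asks only for $\sigma>0$.

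It remains to treat $b\ge\frac12$. First I would observe that this forces $s<1$: if $s\ge1$ then $\tilde4=\frac12$ by \eqref{GrindEQ__1_15_}, so the hypothesis $b<\tilde4$ would give $b<\frac12$, contradicting $b\ge\frac12$. Thus we are in the regime $0<s<1$ and $\frac12\le b<\tilde4=\min\{1,\tfrac32-s\}$, and in this regime \eqref{GrindEQ__1_16_} requires $\sigma\ge1$.

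The second step is to verify that these constraints are exactly the $d=1$ specialization of Corollary~\ref{cor 1.4.}. For $d=1$ the admissible $b$-window of Corollary~\ref{cor 1.4.} is
\[
\frac{d}{2}\le b<\min\Bigl\{4,\,d,\,\tfrac32 d-s,\,\tfrac{d}{2}+2-s\Bigr\}=\min\Bigl\{1,\,\tfrac32-s\Bigr\}=\tilde4,
\]
its range of $s$ is $0<s<\min\{2,d\}=1$, its standing assumption on the nonlinearity is $1\le\sigma<\sigma_c(s)$, and the dimensional constraint $d<8$ holds trivially. All of these coincide with the conditions just derived, so Corollary~\ref{cor 1.4.} applies verbatim and delivers standard continuous dependence in $H^s$ for this case. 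Together with the first case this exhausts the parameter region of Corollary~\ref{cor 1.6.}.

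The content here is bookkeeping rather than analysis, and the one point deserving care is the boundary behaviour of $\tilde4$ at $s=1$ encoded in \eqref{GrindEQ__1_15_}. It is precisely the drop of $\tilde4$ to $\frac12$ for $s\ge1$ that ensures the case $b\ge\frac12$ cannot occur once $s\ge1$, so that the Liu--Zhang region $\{b<\frac12\}$ and the Corollary~\ref{cor 1.4.} region $\{s<1,\ b\ge\frac12\}$ fit together with neither a mismatch in their $\sigma$-requirements nor any gap in the claimed parameter set $\{0<s\le2,\ 0<b<\tilde4\}$.
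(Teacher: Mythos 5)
Your proof is correct and is exactly the argument the paper intends: Corollary \ref{cor 1.6.} is presented as a direct combination of Corollary \ref{cor 1.4.} with the continuous dependence result of \cite{LZ212}, and your dichotomy ($0<b<\frac{1}{2}$ handled by \cite{LZ212} since $\min\{4,\frac{d}{2}\}=\frac{1}{2}$ for $d=1$; $b\ge\frac{1}{2}$ forcing $s<1$ via \eqref{GrindEQ__1_15_} and then matching verbatim the $d=1$ specialization of Corollary \ref{cor 1.4.}) supplies the bookkeeping the paper leaves implicit. The only unstated (and trivial) point is that when $\sigma$ is an even integer the hypothesis $\sigma\ge 1$ of Corollary \ref{cor 1.4.} holds automatically, since positive even integers are at least $2$.
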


This paper is organized as follows. In Section \ref{sec 2.}, we recall some notation and give some preliminary results related to our problem. In Section \ref{sec 3.}, we establish the estimates of the term $f(u)-f(v)$ in the fractional Sobolev spaces, where $f(u)$ is a nonlinear function which behaves like $\lambda|u|^{\sigma}u$ with $\lambda\in \mathbb R$. In Section \ref{sec 4.}, we prove Theorem \ref{thm 1.2.}.

\section{Preliminaries}\label{sec 2.}
Let us introduce some notation used throughout the paper.
$C>0$ stands for a positive universal constant, which can be different at different places.
The notation $a\lesssim b$ means $a\le Cb$ for some constant $C>0$.
For $p\in \left[1,\infty \right]$, $p'$ denotes the dual number of $p$, i.e. $1/p+1/p'=1$.
For $s\in\mathbb R$, we denote by $[s]$ the largest integer which is less than or equal to $s$ and by $\left\lceil s\right\rceil $ the minimal integer which is larger than or equal to $s$.
As in \cite{WHHG11}, for $s\in \mathbb R$ and $1<p<\infty $, we denote by $H_{p}^{s} (\mathbb R^{d} )$ and $\dot{H}_{p}^{s} (\mathbb R^{d} )$ the nonhomogeneous Sobolev space and homogeneous Sobolev space, respectively. As usual, we abbreviate $H_{2}^{s} (\mathbb R^{d} )$ and $\dot{H}_{2}^{s} (\mathbb R^{d} )$ as $H^{s} (\mathbb R^{d} )$ and $\dot{H}^{s} (\mathbb R^{d} )$, respectively. Given two normed spaces $X$ and $Y$, $X\hookrightarrow Y$ means that $X$ is continuously embedded in $Y$, i.e. there exists a constant $C\left(>0\right)$ such that $\left\| f\right\| _{Y} \le C\left\| f\right\| _{X} $ for all $f\in X$. If there is no confusion, $\mathbb R^{d} $ will be omitted in various function spaces.

Next, we recall some useful facts which are used throughout the paper.
First of all, we recall some useful embeddings on Sobolev spaces. See \cite{WHHG11} for example.

\begin{lemma}\label{lem 2.1.}
Let $-\infty <s_{2} \le s_{1} <\infty $ and $1<p_{1} \le p_{2} <\infty $ with $s_{1} -\frac{d}{p_{1} } =s_{2} -\frac{d}{p_{2} } $. Then we have the following embeddings:
\[\dot{H}_{p_{1} }^{s_{1} } \hookrightarrow \dot{H}_{p_{2} }^{s_{2} } ,H_{p_{1} }^{s_{1} } \hookrightarrow H_{p_{2} }^{s_{2} } .\]
\end{lemma}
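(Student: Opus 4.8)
The plan is to reduce both embeddings to the $L^{p_{1}}\to L^{p_{2}}$ boundedness of fractional integration operators and to invoke the Hardy--Littlewood--Sobolev (HLS) inequality, the scaling hypothesis $s_{1}-\frac{d}{p_{1}}=s_{2}-\frac{d}{p_{2}}$ being exactly the condition that matches the HLS exponent relation. Writing $\alpha:=s_{1}-s_{2}\ge 0$, the hypothesis rearranges to $\alpha=d\left(\tfrac{1}{p_{1}}-\tfrac{1}{p_{2}}\right)$, equivalently $\tfrac{1}{p_{2}}=\tfrac{1}{p_{1}}-\tfrac{\alpha}{d}$. Since $p_{1}\le p_{2}$ with $1<p_{1}<\infty$, we have $0\le\alpha<d$, and $\alpha>0$ precisely when $p_{1}<p_{2}$.

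First I would treat the homogeneous embedding, using $\|f\|_{\dot{H}_{p}^{s}}=\|(-\Delta)^{s/2}f\|_{L^{p}}$. Setting $g:=(-\Delta)^{s_{1}/2}f$, so that $f=(-\Delta)^{-s_{1}/2}g$, I would write
$$
(-\Delta)^{s_{2}/2}f=(-\Delta)^{-(s_{1}-s_{2})/2}g=I_{\alpha}g ,
$$
where $I_{\alpha}$ is the Riesz potential of order $\alpha$. If $\alpha=0$ then $s_{1}=s_{2}$ and $p_{1}=p_{2}$, so the map is the identity. If $\alpha>0$, then $0<\alpha<d$ and $1<p_{1}<p_{2}<\infty$ with $\tfrac{1}{p_{2}}=\tfrac{1}{p_{1}}-\tfrac{\alpha}{d}$, so HLS gives $\|I_{\alpha}g\|_{L^{p_{2}}}\lesssim\|g\|_{L^{p_{1}}}$, that is $\|f\|_{\dot{H}_{p_{2}}^{s_{2}}}\lesssim\|f\|_{\dot{H}_{p_{1}}^{s_{1}}}$.

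For the inhomogeneous embedding I would argue analogously with the Bessel potential replacing the Riesz potential. Using $\|f\|_{H_{p}^{s}}=\|(I-\Delta)^{s/2}f\|_{L^{p}}$ and $g:=(I-\Delta)^{s_{1}/2}f$, I obtain $(I-\Delta)^{s_{2}/2}f=(I-\Delta)^{-\alpha/2}g=G_{\alpha}\ast g$, where $G_{\alpha}$ is the Bessel kernel of order $\alpha$. The decisive input is the pointwise control $0\le G_{\alpha}(x)\lesssim|x|^{\alpha-d}$ for $|x|<1$ and $G_{\alpha}(x)\lesssim e^{-c|x|}$ for $|x|\ge 1$. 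Splitting $G_{\alpha}=G_{\alpha}\mathbf{1}_{|x|<1}+G_{\alpha}\mathbf{1}_{|x|\ge 1}$, the singular part is dominated by the Riesz kernel and handled by HLS exactly as above, while the tail lies in $L^{r}$ with $1+\tfrac{1}{p_{2}}=\tfrac{1}{r}+\tfrac{1}{p_{1}}$ (here $\tfrac{1}{r}=1-\tfrac{\alpha}{d}\in(0,1]$, so $r\in[1,\infty)$) and is handled by Young's inequality. Summing the two contributions yields $\|f\|_{H_{p_{2}}^{s_{2}}}\lesssim\|f\|_{H_{p_{1}}^{s_{1}}}$.

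The only points requiring care — and the closest thing to an obstacle — are the verification that the endpoint exclusions $p_{1}>1$ and $p_{2}<\infty$ place us strictly inside the range where HLS holds (it fails at $p_{1}=1$ and at $p_{2}=\infty$), the check that $0\le\alpha<d$, and, in the inhomogeneous case, the low-frequency behaviour, where the Riesz potential is unbounded on $L^{p_{1}}$ but the exponential decay of the Bessel kernel restores boundedness. These are standard facts for Bessel potential spaces and are precisely what underlies the cited reference \cite{WHHG11}.
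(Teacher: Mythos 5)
Your argument is correct and complete in all essentials. Be aware, though, that the paper itself contains no proof of this lemma: it is recalled from \cite{WHHG11} (``we recall some useful embeddings on Sobolev spaces''), so there is no in-paper argument to compare against, and the relevant comparison is with the standard literature. Your proof is the classical potential-theoretic one: the hypothesis $s_{1}-\frac{d}{p_{1}}=s_{2}-\frac{d}{p_{2}}$ is exactly the Hardy--Littlewood--Sobolev exponent relation for the Riesz potential $I_{\alpha}$ with $\alpha=s_{1}-s_{2}=d\left(\frac{1}{p_{1}}-\frac{1}{p_{2}}\right)\in[0,d)$, the endpoint exclusions $p_{1}>1$, $p_{2}<\infty$ are precisely what HLS requires, and your splitting of the Bessel kernel $G_{\alpha}$ into a singular piece dominated by the Riesz kernel (handled by HLS) plus an exponentially decaying tail in $L^{r}$ with $\frac{1}{r}=1-\frac{\alpha}{d}\in(0,1]$ (handled by Young) is the standard and correct repair for the failure of $I_{\alpha}$ at low frequencies. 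By contrast, the cited monograph obtains such embeddings via Littlewood--Paley theory: Bernstein's inequality gives $\|\Delta_{j}f\|_{L^{p_{2}}}\lesssim 2^{jd(1/p_{1}-1/p_{2})}\|\Delta_{j}f\|_{L^{p_{1}}}$ blockwise, and the lemma is the case $\dot{F}^{s_{1}}_{p_{1},2}\hookrightarrow \dot{F}^{s_{2}}_{p_{2},2}$ of embeddings valid on the whole Besov/Triebel--Lizorkin scale; that route is more general, while yours is more elementary and self-contained at the stated generality. Two points you should make explicit if writing this up, neither of which is a gap: (i) the identity $(-\Delta)^{s_{2}/2}f=I_{\alpha}(-\Delta)^{s_{1}/2}f$ needs justification on a dense class (e.g.\ Schwartz functions whose Fourier transform vanishes near the origin, under the usual realization of $\dot{H}^{s_{1}}_{p_{1}}$), since $I_{\alpha}$ is a priori defined only on suitable functions; and (ii) when $\alpha=0$ the hypothesis forces $p_{1}=p_{2}$, so both embeddings degenerate to the identity, as you noted. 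Finally, your instinct to run the inhomogeneous case through the Bessel kernel rather than trying to combine the homogeneous embedding with $H^{s}_{p}=L^{p}\cap\dot{H}^{s}_{p}$ is right: the latter shortcut fails because $L^{p_{1}}\not\hookrightarrow L^{p_{2}}$ on $\mathbb{R}^{d}$ when $p_{1}<p_{2}$, and the exponential decay of $G_{\alpha}$ is exactly what restores boundedness at low frequencies.
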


\begin{lemma}\label{lem 2.2.}
Let $-\infty <s<\infty $ and $1<p<\infty $. Then we have
\begin{enumerate}
\item $H_{p}^{s+\varepsilon } \hookrightarrow H_{p}^{s}~(\varepsilon >0)$,
\item $H_{p}^{s} =L^{p} \cap \dot{H}_{p}^{s}~(s>0)$.
\end{enumerate}
\end{lemma}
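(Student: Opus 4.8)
The plan is to reduce both statements to the $L^p$-boundedness of a few explicit Fourier multiplier operators and then invoke the Mikhlin--H\"ormander multiplier theorem. Recall that, by definition, $\|f\|_{H_p^s}=\|J^s f\|_{L^p}$ and $\|f\|_{\dot H_p^s}=\||D|^s f\|_{L^p}$, where $J^s=\langle D\rangle^s=(1-\Delta)^{s/2}$ has Fourier multiplier $\langle\xi\rangle^s=(1+|\xi|^2)^{s/2}$ and $|D|^s$ has multiplier $|\xi|^s$. Thus $J^s$ is by construction an isometric isomorphism $H_p^s\to L^p$, and every norm inequality we need will be equivalent to the statement that a certain multiplier $m(\xi)$ acts boundedly on $L^p$ for $1<p<\infty$. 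The one standing tool is: if $m\in C^{\infty}(\mathbb R^d\setminus\{0\})$ satisfies $|\xi|^{|\alpha|}\,|\partial^{\alpha}m(\xi)|\le C$ for all $|\alpha|\le\lfloor d/2\rfloor+1$, then $m(D)$ is bounded on $L^p$ for $1<p<\infty$.

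For part (1), I would reduce the claim $\|f\|_{H_p^s}\lesssim\|f\|_{H_p^{s+\varepsilon}}$ to the identity $J^s f=\langle D\rangle^{-\varepsilon}\,J^{s+\varepsilon}f$, so that it suffices to show $\langle D\rangle^{-\varepsilon}$ is bounded on $L^p$. Here there are two equally short routes: either observe that the Bessel kernel $G_{\varepsilon}$ defined by $\widehat{G_{\varepsilon}}(\xi)=\langle\xi\rangle^{-\varepsilon}$ is a nonnegative $L^1$ function with $\|G_{\varepsilon}\|_{L^1}=1$ for $\varepsilon>0$, whence Young's inequality gives $\|\langle D\rangle^{-\varepsilon}f\|_{L^p}\le\|f\|_{L^p}$; or verify directly that $\langle\xi\rangle^{-\varepsilon}$ meets the Mikhlin condition (it is a smooth symbol of nonpositive order). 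Either way, $\|f\|_{H_p^s}\le\|f\|_{H_p^{s+\varepsilon}}$.

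For part (2) I would prove the two inclusions separately and then equate norms. The inclusion $H_p^s\hookrightarrow L^p\cap\dot H_p^s$ follows from $\|f\|_{L^p}=\|\langle D\rangle^{-s}J^s f\|_{L^p}\lesssim\|J^s f\|_{L^p}$ (using the $\varepsilon=s$ case of part (1)) together with the boundedness of the multiplier $|\xi|^s/\langle\xi\rangle^s=(|\xi|^2/(1+|\xi|^2))^{s/2}$, which gives $\||D|^s f\|_{L^p}\lesssim\|J^s f\|_{L^p}$. For the reverse inclusion I would use the algebraic splitting
\[
\langle\xi\rangle^s=\frac{\langle\xi\rangle^s}{1+|\xi|^s}\bigl(1+|\xi|^s\bigr)=m_0(\xi)\bigl(1+|\xi|^s\bigr),\qquad m_0(\xi):=\frac{\langle\xi\rangle^s}{1+|\xi|^s},
\]
which after inverting the Fourier transform reads $J^s f=m_0(D)f+m_0(D)\,|D|^s f$. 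If $m_0(D)$ is bounded on $L^p$, then $\|f\|_{H_p^s}=\|J^s f\|_{L^p}\lesssim\|f\|_{L^p}+\||D|^s f\|_{L^p}$, and combining the two inclusions yields $\|f\|_{H_p^s}\approx\|f\|_{L^p}+\|f\|_{\dot H_p^s}$, i.e. $H_p^s=L^p\cap\dot H_p^s$ with equivalent norms.

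The main obstacle, and the only genuinely non-routine point, is verifying the Mikhlin hypotheses for $m_0$ (and for $|\xi|^s/\langle\xi\rangle^s$) near the origin, where $|\xi|^s$ fails to be smooth whenever $s\notin 2\mathbb N$. The key observation that rescues everything is precisely the hypothesis $s>0$: expanding near $\xi=0$ gives $m_0(\xi)=1-|\xi|^s+O(|\xi|^2)$, so for $|\alpha|\ge1$ the leading non-smooth contribution is $\partial^{\alpha}(-|\xi|^s)\sim|\xi|^{s-|\alpha|}$, whence $|\xi|^{|\alpha|}|\partial^{\alpha}m_0(\xi)|\sim|\xi|^s\to0$ as $\xi\to0$; at infinity $m_0\to1$ with decaying derivatives, and elsewhere $m_0$ is smooth and bounded away from $0$. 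Thus $m_0$ satisfies the Mikhlin condition exactly because $s>0$, and the analogous (easier) check applies to $|\xi|^s/\langle\xi\rangle^s$. An alternative to the Mikhlin route, should one prefer to avoid pointwise derivative bookkeeping, is a Littlewood--Paley argument: decompose $f$ into its low-frequency part and dyadic pieces $P_j f$, control the low block in $L^p$ and the high blocks through $\||D|^s P_j f\|_{L^p}$, and sum. I expect the multiplier approach to be the cleanest, with the origin estimate above being the crux.
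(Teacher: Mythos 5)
Your proof is correct, and the first thing to note is that the paper contains no proof of this lemma to compare against: Lemma 2.2 is recalled as a standard fact with a pointer to \cite{WHHG11}, so you have supplied an argument where the authors supplied a citation. Your multiplier-theoretic route is essentially the classical textbook proof of these two facts (it goes back to Stein's characterization of the Bessel potential spaces, and is the kind of argument underlying the treatment in \cite{WHHG11}): part (1) via the Bessel kernel $G_{\varepsilon}\ge 0$ with $\|G_{\varepsilon}\|_{L^{1}}=\widehat{G_{\varepsilon}}(0)=1$ and Young's inequality, and part (2) via the $L^{p}$-boundedness of the two multipliers $|\xi|^{s}\langle\xi\rangle^{-s}$ and $m_{0}(\xi)=\langle\xi\rangle^{s}(1+|\xi|^{s})^{-1}$. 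You have also correctly isolated the one genuinely non-routine point, namely the Mikhlin bounds at the origin where $|\xi|^{s}$ fails to be smooth for $s\notin 2\mathbb{N}$, and correctly observed that the hypothesis $s>0$ is exactly what saves the estimate $|\xi|^{|\alpha|}|\partial^{\alpha}m_{0}(\xi)|\lesssim|\xi|^{s}$ there.

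Two small points would tighten the write-up, neither of which is a gap. First, the expansion near the origin should read $m_{0}(\xi)=1-|\xi|^{s}+O\bigl(|\xi|^{\min\{2,2s\}}\bigr)$ rather than $O(|\xi|^{2})$: for $0<s<1$ the term $|\xi|^{2s}$ coming from the geometric series $(1+|\xi|^{s})^{-1}=\sum_{k\ge 0}(-1)^{k}|\xi|^{ks}$ dominates $|\xi|^{2}$. This changes nothing, since every term in the product of that series with the smooth factor $\langle\xi\rangle^{s}$ is either smooth or a constant multiple of $|\xi|^{ks}$ with $k\ge 1$, each contributing $|\xi|^{|\alpha|}\,|\partial^{\alpha}(\cdot)|\lesssim|\xi|^{ks}\le C$ on a small ball. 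Second, differentiating an asymptotic expansion term by term deserves a word of justification; here it is legitimate because on a punctured neighbourhood of $0$ the differentiated series converges locally uniformly, or one can avoid the expansion altogether and prove $|\partial^{\alpha}m_{0}(\xi)|\lesssim 1+|\xi|^{s-|\alpha|}$ near the origin directly by the quotient rule and induction on $|\alpha|$. It is also worth one sentence in the reverse inclusion of part (2) to note that for $f\in L^{p}\cap\dot{H}_{p}^{s}$ the identity $J^{s}f=m_{0}(D)f+m_{0}(D)|D|^{s}f$ holds in $\mathcal{S}'$ with both terms in $L^{p}$, so that $J^{s}f$ is indeed an $L^{p}$ function. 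With these cosmetic repairs your argument is complete and self-contained.
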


\begin{corollary}\label{cor 2.3.}
Let $-\infty <s_{2} \le s_{1} <\infty $ and $1<p_{1} \le p_{2} <\infty $ with $s_{1} -\frac{d}{p_{1} } \ge s_{2} -\frac{d}{p_{2} } $. Then we have $H_{p_{1} }^{s_{1} } \hookrightarrow H_{p_{2} }^{s_{2} } $.
\end{corollary}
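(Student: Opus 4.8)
The plan is to reduce the inequality case to the equality case already handled by Lemma~\ref{lem 2.1.}, using the regularity-monotonicity from Lemma~\ref{lem 2.2.} to absorb the slack. The key observation is that the hypothesis $s_{1}-\frac{d}{p_{1}}\ge s_{2}-\frac{d}{p_{2}}$ can be converted into an equality by raising the target smoothness from $s_{2}$ to a suitable intermediate value while keeping the integrability fixed at $p_{2}$.

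Concretely, I would introduce the intermediate index $\tilde{s}_{2}:=s_{1}-\frac{d}{p_{1}}+\frac{d}{p_{2}}$, chosen precisely so that the scaling identity $s_{1}-\frac{d}{p_{1}}=\tilde{s}_{2}-\frac{d}{p_{2}}$ holds with equality. Two elementary order checks then license the application of Lemma~\ref{lem 2.1.}. First, since $p_{1}\le p_{2}$ we have $\frac{d}{p_{2}}\le\frac{d}{p_{1}}$, whence $\tilde{s}_{2}\le s_{1}$. Second, the standing inequality $s_{1}-\frac{d}{p_{1}}\ge s_{2}-\frac{d}{p_{2}}$ rearranges exactly to $\tilde{s}_{2}\ge s_{2}$. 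Thus $s_{2}\le\tilde{s}_{2}\le s_{1}$ and $1<p_{1}\le p_{2}<\infty$ with $s_{1}-\frac{d}{p_{1}}=\tilde{s}_{2}-\frac{d}{p_{2}}$, so Lemma~\ref{lem 2.1.} (applied with $\tilde{s}_{2}$ in the role of $s_{2}$) gives $H_{p_{1}}^{s_{1}}\hookrightarrow H_{p_{2}}^{\tilde{s}_{2}}$.

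It then remains to descend from smoothness $\tilde{s}_{2}$ to $s_{2}$ at fixed integrability $p_{2}$. If $\tilde{s}_{2}=s_{2}$ there is nothing left to prove. Otherwise $\tilde{s}_{2}=s_{2}+\varepsilon$ with $\varepsilon>0$, and part~(1) of Lemma~\ref{lem 2.2.} yields $H_{p_{2}}^{\tilde{s}_{2}}=H_{p_{2}}^{s_{2}+\varepsilon}\hookrightarrow H_{p_{2}}^{s_{2}}$. Composing the two continuous embeddings produces the desired $H_{p_{1}}^{s_{1}}\hookrightarrow H_{p_{2}}^{s_{2}}$, with embedding constant the product of the two constituent constants. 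I do not anticipate any genuine obstacle: the statement is routine embedding bookkeeping, and the only point demanding care is verifying the ordering $s_{2}\le\tilde{s}_{2}\le s_{1}$ that makes Lemma~\ref{lem 2.1.} legitimately applicable, which is exactly where the two hypotheses $p_{1}\le p_{2}$ and $s_{1}-\frac{d}{p_{1}}\ge s_{2}-\frac{d}{p_{2}}$ get consumed.
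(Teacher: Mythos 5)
Your proof is correct and is essentially the paper's own argument: the paper's proof is the one-line remark that the result follows from Lemma~\ref{lem 2.1.} and Item 1 of Lemma~\ref{lem 2.2.}, and your write-up simply fills in the details (the intermediate index $\tilde{s}_{2}$, the ordering checks, and the composition of the two embeddings) of exactly that reduction.
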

\begin{proof} The result follows from Lemma \ref{lem 2.1.} and Item 1 of Lemma \ref{lem 2.2.}.
\end{proof}

Next, we recall the well-known fractional chain rule and product rule.

\begin{lemma}[Fractional Chain Rule, \cite{CW91, KPV93}]\label{lem 2.4.}
Suppose $G\in C^{1} (\mathbb C)$ and $s\in \left(0,\;1\right)$. Then for $1<r,\;r_{2} <\infty $, and $1<r_{1} \le \infty $ satisfying $\frac{1}{r} =\frac{1}{r_{1} } +\frac{1}{r_{2} } $,
\begin{equation} \nonumber
\left\| G(u)\right\| _{\dot{H}_{r}^{s} } \lesssim\left\| G'(u)\right\| _{r_{1} } \left\|u\right\|_{\dot{H}_{r_{2} }^{s} } .
\end{equation}
\end{lemma}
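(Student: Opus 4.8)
The plan is to reduce the homogeneous Sobolev norm of $G(u)$ to a square function built from the local oscillations of $G(u)$, to estimate that oscillation pointwise by the mean value theorem, and then to decouple the two resulting factors by Hölder's inequality together with the boundedness of the Hardy--Littlewood maximal operator. Since $s\in(0,1)$, I would first invoke the Strichartz--Stein characterization of the homogeneous Sobolev space: for $1<\rho<\infty$ one has the equivalence
\begin{equation}\nonumber
\left\|f\right\|_{\dot{H}_{\rho}^{s}}\approx \left\|\mathcal{D}^{s}f\right\|_{\rho},\qquad \mathcal{D}^{s}f(x):=\left(\int_{0}^{\infty}\left(t^{-s}\,\frac{1}{|B(0,t)|}\int_{|y|<t}\left|f(x+y)-f(x)\right|\,dy\right)^{2}\frac{dt}{t}\right)^{1/2},
\end{equation}
where $B(0,t)$ is the ball of radius $t$. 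The restriction $s\in(0,1)$ is precisely what makes this oscillation square function (with the absolute value \emph{inside} the spatial average) comparable to the norm; for $s\ge 1$ first differences saturate and the equivalence fails. It therefore suffices to bound $\left\|\mathcal{D}^{s}(G(u))\right\|_{r}$ from above.

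For the pointwise oscillation I would use that $G\in C^{1}(\mathbb{C})$ and write, via the fundamental theorem of calculus,
\begin{equation}\nonumber
\left|G(u(x+y))-G(u(x))\right|\le \left|u(x+y)-u(x)\right|\int_{0}^{1}\left|G'\!\left(u(x)+\theta(u(x+y)-u(x))\right)\right|d\theta .
\end{equation}
Inserting this bound into $\mathcal{D}^{s}(G(u))$, the goal reduces to the pointwise estimate
\begin{equation}\nonumber
\mathcal{D}^{s}(G(u))(x)\lesssim M\!\left(\left|G'(u)\right|\right)(x)\cdot \mathcal{D}^{s}u(x),
\end{equation}
where $M$ denotes the Hardy--Littlewood maximal operator. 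Granting this, I would finish by Hölder's inequality with $\frac{1}{r}=\frac{1}{r_{1}}+\frac{1}{r_{2}}$, the maximal bound $\left\|M(|G'(u)|)\right\|_{r_{1}}\lesssim \left\|G'(u)\right\|_{r_{1}}$ (valid for $1<r_{1}<\infty$, and trivial for $r_{1}=\infty$ since $M(|G'(u)|)\le \|G'(u)\|_{\infty}$ a.e.), and the reverse half of the equivalence above, $\left\|\mathcal{D}^{s}u\right\|_{r_{2}}\approx \left\|u\right\|_{\dot{H}_{r_{2}}^{s}}$; the hypotheses $1<r,r_{2}<\infty$ are exactly what let me apply the Strichartz--Stein equivalence to both $G(u)$ and $u$.

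The main obstacle is precisely the displayed pointwise inequality. The difficulty is that the average $\int_{0}^{1}|G'(u(x)+\theta(u(x+y)-u(x)))|\,d\theta$ of $|G'|$ along the segment joining $u(x)$ and $u(x+y)$ is entangled with the far value $u(x+y)$, and it cannot be dominated by $M(|G'(u)|)(x)$ through any crude endpoint estimate, as spike examples for $G'$ reveal. The decoupling is made possible by the weight $t^{-2s}\,dt/t$ with $s<1$: those configurations in which the intermediate argument lies far from $u(x)$ are necessarily accompanied by large differences $|u(x+y)-u(x)|$, and these are absorbed into $\mathcal{D}^{s}u(x)$, while $M(|G'(u)|)(x)$ records the local size of $|G'(u)|$. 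Carrying out this layering of the $\theta$--integral rigorously, so as to replace the segment average by $M(|G'(u)|)(x)$ up to an acceptable error, is exactly the content of the arguments of Christ--Weinstein \cite{CW91} and Kenig--Ponce--Vega \cite{KPV93}, whose scheme I would follow to complete the proof.
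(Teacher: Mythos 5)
The paper offers no proof of Lemma \ref{lem 2.4.}: it is recalled verbatim from \cite{CW91,KPV93}, so your attempt must be measured against those arguments, and there it has a genuine gap. The pointwise inequality $\mathcal{D}^{s}(G(u))(x)\lesssim M\left(|G'(u)|\right)(x)\,\mathcal{D}^{s}u(x)$, on which your entire reduction rests, is false --- not merely hard. Take $u=c\,\chi_{B}$ a multiple of the indicator of a ball (which lies in $\dot{H}^{s}_{r_{2}}$ once $sr_{2}<1$), and $G(z)=g(|z|)$ with $g$ smooth, $g(0)=0$, $g'\ge 0$ supported in $(c/4,3c/4)$, so that $G(c)=\int g'>0$. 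Then $G(u)=G(c)\chi_{B}$ and the left-hand side of the lemma is positive, while $u$ takes only the values $0$ and $c$, where $G'$ vanishes; hence $G'(u)\equiv 0$ a.e., $M(|G'(u)|)\equiv 0$, and both your pointwise bound and, indeed, the lemma's right-hand side are identically zero. Your own text half-recognizes the danger (``spike examples for $G'$''), but the proposed rescue --- that intermediate arguments far from $u(x)$ force large differences $|u(x+y)-u(x)|$ absorbed into $\mathcal{D}^{s}u(x)$ --- cannot work: in the example the majorant vanishes identically, so no layering of the $\theta$-integral and no exploitation of the weight $t^{-2s}\,dt/t$ can repair the inequality. (The example also shows that the bare statement for arbitrary $G\in C^{1}(\mathbb C)$ is a folklore overstatement; it is harmless in this paper only because the lemma is ever applied to power-type $G$ with $|G'(z)|\lesssim |z|^{\rho}$, after which $\left\|G'(u)\right\|_{r_{1}}$ is immediately replaced by $\left\||u|^{\rho}\right\|_{r_{1}}$, cf. \eqref{GrindEQ__3_3_}, \eqref{GrindEQ__3_7_}.)

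What \cite{CW91,KPV93} actually do is structurally different from your scheme. They use that $|G'|$ is essentially monotone in $|z|$ (automatic for power-type nonlinearities), so the segment average obeys $\int_{0}^{1}\left|G'\left(u(x)+\theta(u(x+y)-u(x))\right)\right|d\theta\lesssim |G'(u(x))|+|G'(u(x+y))|$. The near term pairs directly with $\mathcal{D}^{s}u(x)$; the far term $|G'(u(x+y))|$ is decoupled \emph{not} by $M(|G'(u)|)(x)$ but by H\"{o}lder inside the $y$-average, producing $\left(M\left(|G'(u)|^{\rho}\right)(x)\right)^{1/\rho}$ for some $1<\rho<r_{1}$ together with a modified square function carrying an inner $L^{\rho'}$-average, both handled by the maximal inequality and the Strichartz--Stein characterization with that inner exponent. (An alternative route is the Littlewood--Paley telescoping $G(u)=G(P_{\le 0}u)+\sum_{j}\bigl(G(P_{\le j+1}u)-G(P_{\le j}u)\bigr)$, again leaning on the power structure.) Two further points: you explicitly defer the key estimate to the references (``whose scheme I would follow''), so the attempt is incomplete as a standalone proof even on its own terms; and your claimed equivalence $\left\|f\right\|_{\dot{H}^{s}_{\rho}}\approx\left\|\mathcal{D}^{s}f\right\|_{\rho}$ for all $1<\rho<\infty$ needs care, since the direction bounding $\left\|f\right\|_{\dot{H}^{s}_{\rho}}$ by $\left\|\mathcal{D}^{s}f\right\|_{\rho}$ --- the one you need for $G(u)$ --- holds only in the restricted range $\rho>\frac{2d}{d+2s}$.
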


\begin{lemma}[Fractional Product Rule, \cite{CW91}]\label{lem 2.5.}
Let $s\ge 0$, $1<r,r_{2},p_{1}<\infty$ and $1<r_{1},p_{2}\le\infty$. Assume that
\[\frac{1}{r} =\frac{1}{r_{i} } +\frac{1}{p_{i} }~(i=1,2).\]
Then we have
\begin{equation}
\left\| fg\right\| _{\dot{H}_{r}^{s} } \lesssim \left\| f\right\| _{r_{1} } \left\| g\right\| _{\dot{H}_{p_{1} }^{s} } +\left\| f\right\| _{\dot{H}_{r_{2} }^{s} } \left\| g\right\| _{p_{2} } .
\end{equation}
\end{lemma}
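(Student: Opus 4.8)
The plan is to establish this fractional Leibniz rule by Littlewood--Paley theory together with Bony's paraproduct decomposition. First I would dispose of the case $s=0$: since $\dot H^0_r=L^r$, the desired bound reduces to $\|fg\|_{L^r}\le\|f\|_{r_1}\|g\|_{p_1}$, which is H\"older's inequality under $\frac1r=\frac1{r_1}+\frac1{p_1}$ and is clearly dominated by the right-hand side. Hence I may assume $s>0$ and rely on the homogeneous square-function characterization
\[
\|h\|_{\dot H^s_r}\sim\Big\|\big(\textstyle\sum_{k\in\mathbb Z}2^{2ks}|\Delta_k h|^2\big)^{1/2}\Big\|_{L^r},
\]
valid for $1<r<\infty$, where $\{\Delta_k\}$ denote the standard Littlewood--Paley projections and $S_k=\sum_{l\le k}\Delta_l$ the associated low-frequency cutoffs.

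Next I would insert Bony's decomposition $fg=T_fg+T_gf+R(f,g)$, with the low--high paraproduct $T_fg=\sum_k(S_{k-N}f)(\Delta_k g)$, the high--low paraproduct $T_gf=\sum_j(\Delta_jf)(S_{j-N}g)$, and the high--high remainder $R(f,g)=\sum_{|j-k|\le N}(\Delta_jf)(\Delta_kg)$, estimating the three pieces separately. For $T_fg$ each summand is frequency-localized in an annulus of size $\sim2^k$, so only indices $k\approx l$ contribute to $\Delta_l(T_fg)$; this yields the pointwise bound $2^{ls}|\Delta_l(T_fg)|\lesssim(\sup_k|S_{k-N}f|)\sum_{k\approx l}2^{ks}|\Delta_kg|$. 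Applying the $L^r$ norm to the square function, H\"older's inequality under $\frac1r=\frac1{r_1}+\frac1{p_1}$, the pointwise majorization $\sup_k|S_{k-N}f|\lesssim Mf$ by the Hardy--Littlewood maximal function, and the boundedness of $M$ on $L^{r_1}$ (using $Mf\le\|f\|_\infty$ when $r_1=\infty$), I obtain $\|T_fg\|_{\dot H^s_r}\lesssim\|f\|_{r_1}\|g\|_{\dot H^s_{p_1}}$. Interchanging the roles of $f$ and $g$ and using $\frac1r=\frac1{r_2}+\frac1{p_2}$ gives $\|T_gf\|_{\dot H^s_r}\lesssim\|f\|_{\dot H^s_{r_2}}\|g\|_{p_2}$, which is the second term on the right-hand side.

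The main obstacle is the remainder $R(f,g)$, the high--high interaction, since the product of two comparable high frequencies can produce output at arbitrarily low frequency. Here localization forces $j\approx k\gtrsim l$ for $\Delta_lR(f,g)$ to be nonzero, and I would write $2^{ls}=2^{(l-j)s}\,2^{js}$; because $s>0$ the weights $2^{(l-j)s}$ over $l\le j+C$ form a summable geometric sequence, so a convolution/Young estimate in the frequency index followed by Cauchy--Schwarz and H\"older under $\frac1r=\frac1{r_2}+\frac1{p_2}$ bounds $R(f,g)$ by $\|\,\sup_j|\widetilde\Delta_jg|\,\|_{p_2}\,\|f\|_{\dot H^s_{r_2}}\lesssim\|g\|_{p_2}\|f\|_{\dot H^s_{r_2}}$, again via the maximal inequality (symmetrically one could assign it to the first term instead). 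Collecting the three contributions yields the claimed estimate. It is precisely the remainder that requires $s>0$ for the geometric summation and the vector-valued maximal inequality to close, whereas the two paraproducts are controlled by straightforward frequency localization.
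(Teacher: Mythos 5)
The paper never proves Lemma \ref{lem 2.5.}: it is recalled as a known result with a citation to Christ--Weinstein \cite{CW91}, so there is no internal argument to compare against, and your proposal supplies what the paper omits. What you wrote is the standard Bony-paraproduct proof of the fractional Leibniz rule, and it is correct in outline: the $s=0$ reduction to H\"older, the square-function characterization of $\dot H^s_r$ for $1<r<\infty$, the decomposition $fg=T_fg+T_gf+R(f,g)$ with $T_fg$ assigned to $\|f\|_{r_1}\|g\|_{\dot H^s_{p_1}}$ and the other two pieces to $\|f\|_{\dot H^s_{r_2}}\|g\|_{p_2}$, and the correct diagnosis that only the high--high remainder needs $s>0$ (the two paraproducts are fine for all $s\in\mathbb R$, since their output frequency is comparable to the high input frequency). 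Two routine repairs are worth flagging. First, the pointwise bound $2^{ls}|\Delta_l(T_fg)|\lesssim(\sup_k|S_{k-N}f|)\sum_{k\approx l}2^{ks}|\Delta_kg|$ is not literally valid: $\Delta_l$ is a convolution operator and does not commute with pointwise multiplication, so one must first write $|\Delta_l[(S_{k-N}f)(\Delta_kg)]|\lesssim M[(S_{k-N}f)(\Delta_kg)]$ and invoke the Fefferman--Stein vector-valued maximal inequality before pulling out the supremum and applying H\"older; in other words, the vector-valued maximal inequality is needed in all three pieces, not only in the remainder as your last paragraph suggests. Second, in the endpoint cases the maximal-function step must be bypassed by the trivial bound $Mf\le\|f\|_{\infty}$, which you note for $r_1=\infty$; the same remark applies to $p_2=\infty$ in the high--low and remainder terms, and the outer Fefferman--Stein application is unaffected since it runs at the exponent $r\in(1,\infty)$. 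With these fixes the argument closes and is essentially the proof found in the literature the paper cites.
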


Using Lemma \ref{lem 2.5.}, H\"{o}lder inequality and the induction, we can get the following result.

\begin{corollary}\label{cor 2.6.}
Let $s\ge 0$, $q\in \mathbb N$. Let ${1}<r,\;r_{k}^{i} <\infty $ for $1\le i,\;k\le q$. Assume that
\[\frac{1}{r} =\sum _{i=1}^{k}\frac{1}{r_{k}^{i} }  , \]
for any $1\le k\le q$. Then we have
\begin{equation} \nonumber
\left\| \prod _{i=1}^{k}f_{i}  \right\| _{\dot{H}_{r}^{s} } \lesssim\sum _{k=1}^{k}(\left\| f_{k} \right\| _{\dot{H}_{r_{k}^{k} }^{s} } \prod _{i\in I_{k} }\left\| f_{i} \right\| _{r_{k}^{i} }),
\end{equation}
where $I_{k} =\left\{i\in \mathbb N:\;1\le i\le q,\;i\ne k\right\}$.
\end{corollary}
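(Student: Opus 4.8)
The plan is to argue by induction on the number of factors $q$, using the two-factor fractional product rule (Lemma~\ref{lem 2.5.}) as the engine and H\"older's inequality to redistribute the factors that do not carry the $\dot H^s$ derivative. The analytic content is entirely contained in Lemma~\ref{lem 2.5.}; everything else is a bookkeeping of exponents.

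First I would dispose of the base cases. For $q=1$ the inequality is a tautology (take $r^1_1=r$). For $q=2$ it is exactly Lemma~\ref{lem 2.5.} after relabelling: the two summands produced there, namely $\|f_1\|_{\dot H_{r_2}^{s}}\|f_2\|_{p_2}$ and $\|f_1\|_{r_1}\|f_2\|_{\dot H_{p_1}^{s}}$, coincide with the $k=1$ and $k=2$ terms once the splitting exponents are identified with the prescribed $r^i_k$, and the reciprocal-sum hypothesis $\frac1r=\frac1{r^1_k}+\frac1{r^2_k}$ is precisely the constraint $\frac1r=\frac1{r_i}+\frac1{p_i}$ demanded by Lemma~\ref{lem 2.5.}.

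For the inductive step, assuming the estimate for products of fewer than $q$ factors, I would peel off one factor and write $\prod_{i=1}^{q}f_i=f_{k}\cdot\prod_{i\neq k}f_i$ for the index $k$ whose term is to be produced. Applying Lemma~\ref{lem 2.5.} to this two-fold product bounds $\bigl\|\prod_i f_i\bigr\|_{\dot H_r^{s}}$ by the sum of a piece in which $f_k$ carries the $\dot H_{r^k_k}^{s}$ norm and $\prod_{i\neq k}f_i$ sits in a Lebesgue space $L^{b}$, and a piece in which $f_k$ sits in a Lebesgue space while the partial product carries the $\dot H^s$ norm. On the first piece I would apply H\"older's inequality, $\bigl\|\prod_{i\neq k}f_i\bigr\|_{b}\lesssim\prod_{i\neq k}\|f_i\|_{r^i_k}$ with $\frac1b=\sum_{i\neq k}\frac1{r^i_k}$, which reproduces exactly the $k$-th summand $\|f_k\|_{\dot H_{r^k_k}^{s}}\prod_{i\neq k}\|f_i\|_{r^i_k}$; on the second piece the partial product has only $q-1$ factors and carries the $\dot H^s$ norm, so the induction hypothesis applies and yields terms in which exactly one of the remaining factors carries the Sobolev norm. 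Collecting the contributions over all $k$ gives the asserted bound.

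The step I expect to be the real obstacle is not analytic but the exponent bookkeeping, and it must be handled with some care. At every invocation of Lemma~\ref{lem 2.5.} one has to verify that the auxiliary Sobolev exponent attached to the partial product, together with the Lebesgue exponents introduced by H\"older, satisfies the reciprocal-sum identity $\frac1r=\sum_i\frac1{r^i_k}$ for the relevant target index $k$ and, crucially, lies in the admissible ranges imposed by Lemma~\ref{lem 2.5.}: the derivative-carrying slots and the base exponent must lie strictly in $(1,\infty)$, while the purely Lebesgue slots may be taken up to $\infty$. Since each prescribed $r^i_k$ lies in $(1,\infty)$, the auxiliary exponents $\frac1b=\sum_{i\neq k}\frac1{r^i_k}$ and the intermediate Sobolev exponents are automatically admissible, and it is precisely the freedom to choose the splitting exponents independently for each target index $k$ that lets one realize each prescribed distribution. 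A clean write-up would fix these choices explicitly at each peeling step so that no intermediate exponent escapes the range $(1,\infty)$ and every generated term matches one of the summands on the right-hand side.
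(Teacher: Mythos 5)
The paper offers no written proof of Corollary \ref{cor 2.6.}: it records only the one-line indication ``Using Lemma \ref{lem 2.5.}, H\"older inequality and the induction,'' which is exactly the strategy you propose, so in approach you coincide with the paper. However, your inductive step has a genuine gap at precisely the point you dismiss as mere bookkeeping. When you peel off $f_k$ and apply Lemma \ref{lem 2.5.}, the second term it produces is $\|f_k\|_{L^{a}}\bigl\|\prod_{i\neq k}f_i\bigr\|_{\dot H^{s}_{c}}$ for a \emph{single} pair $(a,c)$ with $\frac{1}{r}=\frac{1}{a}+\frac{1}{c}$; the induction hypothesis applied to the $(q-1)$-fold product then generates one summand for each $j\neq k$, and every one of them is multiplied by the same factor $\|f_k\|_{L^{a}}$. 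To match the asserted right-hand side you would need $a=r^{k}_{j}$ simultaneously for all $j\neq k$, which is impossible when the prescribed systems genuinely vary with the target index $j$ --- and the statement allows them to vary arbitrarily, since the hypothesis $\frac{1}{r}=\sum_{i}\frac{1}{r^{i}_{k}}$ is imposed separately for each $k$. So ``the freedom to choose the splitting exponents independently for each target index $k$'' is not actually available within a single peeling: each application of Lemma \ref{lem 2.5.} commits you to one Lebesgue exponent for the peeled factor across all summands the induction subsequently produces.

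Two remarks on how this is repaired. First, the induction does go through verbatim for \emph{consistent} systems, i.e.\ those in which the Lebesgue exponent attached to a factor is the same in every summand where that factor does not carry the derivative ($r^{i}_{k}=\beta_{i}$ for $i\neq k$), the constraint then forcing $\frac{1}{r^{k}_{k}}-\frac{1}{\beta_{k}}$ to be independent of $k$; the systems actually used in the paper are of this type (compare \eqref{GrindEQ__3_18_}, \eqref{GrindEQ__3_31_} and \eqref{GrindEQ__3_32_}, where $\frac{1}{\tilde{\alpha}_{k_i}}-\frac{1}{\alpha_{k_i}}=\frac{v}{s}\bigl(\frac{1}{r}-\frac{1}{q}\bigr)$ does not depend on $i$), so the corollary as applied in Section \ref{sec 3.} is safe. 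Second, the fully general statement with independent per-$k$ systems is still true, but its natural proof is not the naive induction: one uses a Littlewood--Paley/paraproduct decomposition in which the factor of highest frequency carries the derivative and each regime is estimated with its own H\"older system --- the same mechanism that gives Lemma \ref{lem 2.5.} its two independently split terms. Your write-up should either restrict the claim to consistent systems, as the applications permit, or invoke such a decomposition.
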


\begin{lemma}[\cite{AK211}]\label{lem 2.7.}
Let $s>0$, $1<p<\infty $ and $v=s-[s]$. Then $\sum _{|\alpha |=[s]}\left\| D^{\alpha } f\right\| _{\dot{H}_{p}^{v} }  $ is an equivalent norm on $\dot{H}_{p}^{s}$.
\end{lemma}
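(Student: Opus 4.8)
The plan is to prove the equivalence by reducing it to the $L^p$-boundedness of a finite family of zero-order Fourier multipliers, via the Mikhlin--H\"{o}rmander multiplier theorem. Recall that the homogeneous norm is $\|f\|_{\dot H_p^s} = \||D|^s f\|_{L^p}$, where $|D|^s=(-\Delta)^{s/2}$ has Fourier symbol $|\xi|^s$, and that, writing $D^\alpha=\partial^\alpha$, one has $\|D^\alpha f\|_{\dot H_p^v}=\||D|^v\partial^\alpha f\|_{L^p}$ with symbol $|\xi|^v(i\xi)^\alpha$. Since $|\alpha|=[s]$ and $v=s-[s]$, this symbol is homogeneous of degree $v+[s]=s$, matching $|\xi|^s$; this common homogeneity is precisely what makes the equivalence possible, and it is the one bookkeeping point one must track throughout.

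For the bound $\|D^\alpha f\|_{\dot H_p^v}\lesssim\|f\|_{\dot H_p^s}$, I would write $|D|^v\partial^\alpha f=m_\alpha(D)\,|D|^s f$ with
\[
m_\alpha(\xi)=\frac{|\xi|^v(i\xi)^\alpha}{|\xi|^s}=i^{[s]}\,\frac{\xi^\alpha}{|\xi|^{[s]}}.
\]
Since $|\alpha|=[s]$, the function $\xi^\alpha/|\xi|^{[s]}$ is homogeneous of degree $0$ and $C^\infty$ on $\mathbb R^d\setminus\{0\}$, so $m_\alpha$ satisfies the Mikhlin conditions $|\partial_\xi^\beta m_\alpha(\xi)|\lesssim|\xi|^{-|\beta|}$ (each $\partial^\beta$ of a degree-$0$ homogeneous smooth symbol is degree $-|\beta|$ homogeneous and smooth off the origin). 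Hence $m_\alpha(D)$ is bounded on $L^p$ for every $1<p<\infty$, and summing over the finitely many $\alpha$ with $|\alpha|=[s]$ yields the estimate.

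For the reverse bound $\|f\|_{\dot H_p^s}\lesssim\sum_{|\alpha|=[s]}\|D^\alpha f\|_{\dot H_p^v}$, the key is a reconstruction identity for $|\xi|^{[s]}$. By the multinomial theorem, $|\xi|^{2[s]}=(\xi_1^2+\cdots+\xi_d^2)^{[s]}=\sum_{|\alpha|=[s]}\binom{[s]}{\alpha}\xi^{2\alpha}$, so dividing by $|\xi|^{[s]}$ gives
\[
|\xi|^s=|\xi|^v|\xi|^{[s]}=|\xi|^v\sum_{|\alpha|=[s]}\binom{[s]}{\alpha}\frac{\xi^\alpha}{|\xi|^{[s]}}\,\xi^\alpha=\sum_{|\alpha|=[s]}n_\alpha(\xi)\,\big(|\xi|^v(i\xi)^\alpha\big),
\]
with $n_\alpha(\xi)=i^{-[s]}\binom{[s]}{\alpha}\xi^\alpha/|\xi|^{[s]}$, again homogeneous of degree $0$ and smooth away from the origin, hence a Mikhlin multiplier bounded on $L^p$. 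This gives the operator identity $|D|^s f=\sum_{|\alpha|=[s]}n_\alpha(D)\,(|D|^v\partial^\alpha f)$, and the triangle inequality together with the $L^p$-boundedness of each $n_\alpha(D)$ produces the reverse estimate. Combining the two directions proves the equivalence, and the argument covers the integer case $s\in\mathbb N$ uniformly, where $v=0$ and $|D|^v=\mathrm{Id}$.

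I do not anticipate a serious obstacle: the entire proof rests on the Mikhlin--H\"{o}rmander theorem, whose hypotheses are immediate since the relevant symbols $\xi^\alpha/|\xi|^{[s]}$ are degree-zero homogeneous and smooth off the origin. The only steps demanding care are the homogeneity accounting—checking that $|\xi|^v(i\xi)^\alpha$ and $|\xi|^s$ share the exact degree $s$, so that the quotient symbols are genuinely degree zero—and the multinomial identity, which is what lets the finite collection of mixed derivative norms recover the full fractional norm $\|f\|_{\dot H_p^s}$.
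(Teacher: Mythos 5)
Your proof is correct. Note that the paper itself offers no proof of this lemma: it is imported verbatim from the cited reference \cite{AK211}, so there is no internal argument to compare yours against. What you wrote is the standard route to this equivalence, and it holds up: both families of symbols $\xi^{\alpha}/|\xi|^{[s]}$ (with $|\alpha|=[s]$) are homogeneous of degree zero and smooth off the origin, hence Mikhlin--H\"{o}rmander multipliers bounded on $L^{p}$ for $1<p<\infty$; the forward inequality is the boundedness of $m_{\alpha}(D)$ applied to $|D|^{s}f$, and the reverse inequality follows from your multinomial reconstruction $|\xi|^{2[s]}=\sum_{|\alpha|=[s]}\binom{[s]}{\alpha}\xi^{2\alpha}$, which expresses $|D|^{s}f$ as a finite sum of multiplier operators acting on the $|D|^{v}\partial^{\alpha}f$. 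The only point you leave implicit is the usual one: the operator identities should be verified on a dense subclass (e.g.\ Schwartz functions with Fourier support away from the origin, or tempered distributions modulo polynomials, depending on how $\dot{H}_{p}^{s}$ is realized) and then extended by density; this is routine and does not affect the argument.
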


We also recall the interpolation inequality in Sobolev spaces. See, for example, Proposition 1.21 of \cite{WHHG11}.
\begin{lemma}[Convexity H\"{o}lder inequality]\label{lem 2.8.}
 Let $1<p,\;p_{i} <\infty $, $0\le \theta _{i} \le 1$, $s_{i} ,\;s\in \mathbb R$, ${\rm (}i=1,\;\ldots ,\;N{\rm )}$, $\sum _{i=1}^{N}\theta _{i}  =1$, $s=\sum _{i=1}^{N}\theta _{i}  s_{i} $, $1/p =\sum _{i=1}^{N}{\theta _{i}/p_{i} }$. Then we have $\bigcap _{i=1}^{N}\dot{H}_{p_{i} }^{s_{i} }  \subset \dot{H}_{p}^{s} $ and for any $v\in \bigcap _{i=1}^{N}\dot{H}_{p_{i} }^{s_{i} } $,
\begin{equation} \nonumber
\left\| v\right\| _{\dot{H}_{p}^{s} } \le \prod _{i=1}^{N}\left\| v\right\| _{\dot{H}_{p_{i} }^{s_{i} } }^{\theta _{i} }  .
\end{equation}
\end{lemma}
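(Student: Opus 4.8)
The plan is to read the statement as a multilinear interpolation inequality for the homogeneous Riesz-potential spaces $\dot H_r^{\rho}$ and to prove it by reducing to the two-space case and then inducting on $N$. First I would clear away the degenerate situations: the bound is trivial when $v=0$, and any index with $\theta_i=0$ can be deleted, since such a term influences neither the product on the right nor the affine constraints $\sum_i\theta_i=1$, $s=\sum_i\theta_i s_i$, $1/p=\sum_i\theta_i/p_i$. After this reduction I may assume $v\neq0$ and $\theta_i>0$ for every surviving index; if only one index survives the claim is a tautology, so I may further assume $N\ge2$, whence each $\theta_i<1$.

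For the base case $N=2$ I would write $\theta=\theta_2$, so $s=(1-\theta)s_1+\theta s_2$ and $1/p=(1-\theta)/p_1+\theta/p_2$, and identify $\dot H_p^s$ with the complex interpolation space $[\dot H_{p_1}^{s_1},\dot H_{p_2}^{s_2}]_{\theta}$. This identification holds because $\dot H_r^{\rho}$ is the homogeneous Triebel-Lizorkin space $\dot F_{r,2}^{\rho}$ and complex interpolation acts affinely on the smoothness and both integrability indices, the fine index staying equal to $2$ when it equals $2$ at both endpoints; the abstract inequality $\|v\|_{[X_0,X_1]_\theta}\le\|v\|_{X_0}^{1-\theta}\|v\|_{X_1}^{\theta}$ then delivers simultaneously the inclusion $\dot H_{p_1}^{s_1}\cap\dot H_{p_2}^{s_2}\subset\dot H_p^s$ and the asserted bound. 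Should a self-contained argument be preferred, I would instead invoke Stein's complex interpolation theorem directly on the inequality: setting $s(z)=(1-z)s_1+zs_2$, $1/p(z)=(1-z)/p_1+z/p_2$ on the strip $0\le\Re z\le1$, choosing by duality a $g$ with $\|g\|_{L^{p'}}=1$ and $\|v\|_{\dot H_p^s}=\int(|\nabla|^s v)\,\bar g\,dx$, taking the Riesz-Thorin analytic extension $g_z$ of $g$, and considering
\[
F(z)=\int_{\mathbb R^d}\bigl(|\nabla|^{s(z)}v\bigr)\,\overline{g_z}\,dx .
\]
One would then bound $|F|$ by $\|v\|_{\dot H_{p_1}^{s_1}}$ on $\Re z=0$ and by $\|v\|_{\dot H_{p_2}^{s_2}}$ on $\Re z=1$, and read off $\|v\|_{\dot H_p^s}=F(\theta)\le\|v\|_{\dot H_{p_1}^{s_1}}^{1-\theta}\|v\|_{\dot H_{p_2}^{s_2}}^{\theta}$ from the three-lines theorem.

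For the inductive step I would set $\mu=\theta_1+\dots+\theta_{N-1}=1-\theta_N>0$ and $\tilde\theta_i=\theta_i/\mu$ ($1\le i\le N-1$), so that $\sum_{i=1}^{N-1}\tilde\theta_i=1$, and introduce the intermediate exponents $\tilde s=\sum_{i=1}^{N-1}\tilde\theta_i s_i$, $1/\tilde p=\sum_{i=1}^{N-1}\tilde\theta_i/p_i$. The induction hypothesis then gives $\|v\|_{\dot H_{\tilde p}^{\tilde s}}\le\prod_{i=1}^{N-1}\|v\|_{\dot H_{p_i}^{s_i}}^{\tilde\theta_i}$. Because the relations $s=\mu\tilde s+\theta_N s_N$, $1/p=\mu/\tilde p+\theta_N/p_N$ and $\mu+\theta_N=1$ hold by construction, the already-proved $N=2$ case applied to the couple $(\dot H_{\tilde p}^{\tilde s},\dot H_{p_N}^{s_N})$ with weight $\theta_N$ yields $\|v\|_{\dot H_p^s}\le\|v\|_{\dot H_{\tilde p}^{\tilde s}}^{\mu}\|v\|_{\dot H_{p_N}^{s_N}}^{\theta_N}$. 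Combining the two displays and using $\mu\tilde\theta_i=\theta_i$ collapses the right-hand side to $\prod_{i=1}^{N}\|v\|_{\dot H_{p_i}^{s_i}}^{\theta_i}$, which closes the induction.

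I expect the only real difficulty to sit inside the $N=2$ step, in two places. The homogeneous spaces $\dot H_r^{\rho}$ live among tempered distributions modulo polynomials, so the duality pairing, the definition of $F(z)$, and the final passage to the limit must be performed on a convenient dense subclass (for instance Schwartz functions whose Fourier transform vanishes near the origin) and then extended, consistently with the realization of $\dot H_r^{\rho}$ adopted in \cite{WHHG11}. The second point is the admissibility of $F$ for the three-lines theorem: one must check that $F$ grows at most subexponentially along the strip, which reduces to the $L^{p_j}$-boundedness of the imaginary-order Fourier multipliers $|\nabla|^{i\tau}$ with operator norm growing at most polynomially in $\tau$, a standard consequence of the Mikhlin multiplier theorem. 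Beyond these analytic inputs the proof is pure bookkeeping with the affine relations among $s$, $1/p$ and the weights.
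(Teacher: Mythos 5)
The paper itself contains no proof of Lemma \ref{lem 2.8.}: it is quoted directly from the literature (Proposition 1.21 of \cite{WHHG11}). So there is no in-paper argument to compare yours against; your submission stands as an independent derivation, and its skeleton --- discarding indices with $\theta_i=0$, inducting on $N$, and settling the two-space case by complex interpolation after identifying $\dot H_p^s$ with $\dot F_{p,2}^s$ --- is the standard textbook route and is structurally sound. The induction bookkeeping ($\mu=1-\theta_N$, $\tilde\theta_i=\theta_i/\mu$, the intermediate pair $(\tilde s,\tilde p)$) is correct, and your caution about realizing homogeneous spaces modulo polynomials and working on a dense subclass is appropriate.

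The genuine defect is the constant. The lemma asserts the inequality with constant exactly $1$, and neither version of your $N=2$ step delivers that. In the first version, the identification $[\dot F_{p_1,2}^{s_1},\dot F_{p_2,2}^{s_2}]_\theta=\dot F_{p,2}^{s}$ holds only with \emph{equivalence} of norms, so the abstract log-convexity $\|v\|_{[X_0,X_1]_\theta}\le\|v\|_{X_0}^{1-\theta}\|v\|_{X_1}^{\theta}$ transfers to the Sobolev norms only up to a multiplicative constant. In the second version, your boundary bounds are not what you claim: on $\Re z=0$ one has $|F(i\tau)|\le\bigl\||\nabla|^{i\tau(s_2-s_1)}\bigr\|_{L^{p_1}\to L^{p_1}}\|v\|_{\dot H_{p_1}^{s_1}}$, and by Mikhlin that imaginary-order multiplier norm grows polynomially in $|\tau|$ --- it is not uniformly bounded, let alone $\le 1$. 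Hence the classical three-lines theorem is inapplicable; one must invoke Hirschman's extended three-lines lemma (or damp $F$ by a factor $e^{\delta(z^2-\theta^2)}$), and either device leaves a constant $C>1$ in the conclusion, which moreover compounds to $C^{N-1}$ through the induction. What your argument actually establishes is $\|v\|_{\dot H_p^s}\lesssim\prod_{i}\|v\|_{\dot H_{p_i}^{s_i}}^{\theta_i}$. That weaker form suffices for every application in the paper --- the lemma is only ever used inside $\lesssim$ chains such as \eqref{GrindEQ__3_17_}, \eqref{GrindEQ__3_33_} and \eqref{GrindEQ__3_38_} --- but it is strictly short of the statement as printed, so you should either flag the discrepancy or give an argument genuinely producing constant $1$.
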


We end this section with recalling the Strichartz estimates for the fourth-order Schr\"{o}dinger equation. See, for example, \cite{D18I} and the reference therein.
\begin{lemma}[Strichartz estimates]\label{lem 2.9.}
Let $\gamma\in \mathbb R$ and $u$ be the solution to the linear fourth-order Schr\"{o}dinger equation, namely
\begin{equation}\nonumber
u(t)=e^{it\Delta^{2}}u_{0}+i\int_{0}^{t}{e^{i(t-s)\Delta^{2}}F(s)ds,}
\end{equation}
for some data $u_{0}$ and $F$. Then for all $(p,q)$ and $(a,b)$ admissible,
$$
\left\|u\right\|_{L^{p}(I,\dot{H}_{q}^{\gamma})}\lesssim  \left\|u_{0}\right\|_{\dot{H}^{\gamma+\gamma_{p,q}}}+ \left\|F\right\|_{L^{a'}(I,\dot{H}_{b'}^{\gamma+\gamma_{p,q}-\gamma_{a',b'}-4})},
$$
where $\gamma_{p,q}$ and $\gamma_{a',b'}$ are as in \eqref{GrindEQ__1_9_}.
\end{lemma}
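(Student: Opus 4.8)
The plan is to split the Duhamel solution $u=e^{it\Delta^{2}}u_{0}+i\int_{0}^{t}e^{i(t-s)\Delta^{2}}F(s)\,ds$ into its free and retarded parts, to prove the homogeneous Strichartz estimate for the propagator $e^{it\Delta^{2}}$ first, and then to obtain the inhomogeneous one from it by a $TT^{*}$/duality argument. The two genuinely analytic inputs are the unitarity of $e^{it\Delta^{2}}$ on every $\dot{H}^{\sigma}$ (immediate from $\bigl|e^{it|\xi|^{4}}\bigr|=1$) and a frequency-localized dispersive estimate. Writing $P_{N}$ for the Littlewood--Paley projection onto $|\xi|\sim N$, the substitution $\xi\mapsto|t|^{-1/4}\xi$ together with stationary phase for the quartic phase $x\cdot\xi+t|\xi|^{4}$ — whose Hessian $4t(|\xi|^{2}I+2\xi\otimes\xi)$ is nondegenerate on the annulus where $P_{N}$ localizes — yields $\|e^{it\Delta^{2}}P_{N}f\|_{L^{\infty}}\lesssim|t|^{-d/2}N^{-d}\|f\|_{L^{1}}$. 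The degeneracy of the phase at $\xi=0$ is harmless because $P_{N}$ excises a neighborhood of the origin.

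Next I would run the Keel--Tao machinery frequency piece by frequency piece, with the $N$-dependent dispersive constant. Interpolating the localized bound with the $L^{2}$ isometry gives $\|e^{it\Delta^{2}}P_{N}f\|_{L^{q}}\lesssim N^{-d(1-2/q)}|t|^{-\frac{d}{2}(1-2/q)}\|f\|_{L^{q'}}$, so the temporal kernel has exponent $\frac{d}{2}(1-2/q)$ and the abstract $TT^{*}$/Hardy--Littlewood--Sobolev argument (with the Keel--Tao endpoint at $p=2$, $\sigma=d/2$) reproduces exactly the Schr\"odinger-admissible relation $\frac{2}{p}+\frac{d}{q}=\frac{d}{2}$ and the bound $\|e^{it\Delta^{2}}P_{N}f\|_{L^{p}(I,L^{q})}\lesssim N^{\gamma_{p,q}}\|P_{N}f\|_{L^{2}}$, using the identity $-d(1-2/q)=2\gamma_{p,q}$ valid on that line. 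Summing over dyadic $N$ by the homogeneous Littlewood--Paley square function — legitimate since $2\le q<\infty$ — gives $\|e^{it\Delta^{2}}f\|_{L^{p}(I,L^{q})}\lesssim\|f\|_{\dot{H}^{\gamma_{p,q}}}$ for every Schr\"odinger-admissible pair. Every admissible $(p,q)\in A$ lies below such a pair at the same $p$ (indeed $q\ge q_{0}$ where $\frac{2}{p}+\frac{d}{q_{0}}=\frac{d}{2}$), so a single spatial Sobolev embedding $\dot{H}^{\,d/q_{0}-d/q}_{q_{0}}\hookrightarrow L^{q}$ (Lemma \ref{lem 2.1.}) upgrades this to all of $A$, the exponents matching because $(d/q_{0}-d/q)+\gamma_{p,q_{0}}=\gamma_{p,q}$; commuting $|\nabla|^{\gamma}$ through the propagator then supplies the $\dot{H}^{\gamma}_{q}$ norm, giving $\|e^{it\Delta^{2}}u_{0}\|_{L^{p}(I,\dot{H}^{\gamma}_{q})}\lesssim\|u_{0}\|_{\dot{H}^{\gamma+\gamma_{p,q}}}$.

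For the retarded term I would factor $\int_{\mathbb{R}}e^{i(t-s)\Delta^{2}}F(s)\,ds=e^{it\Delta^{2}}\big(\int_{\mathbb{R}}e^{-is\Delta^{2}}F(s)\,ds\big)$. Dualizing the homogeneous estimate for $(a,b)$ yields $\|\int_{\mathbb{R}}e^{-is\Delta^{2}}F\,ds\|_{\dot{H}^{\delta-\gamma_{a,b}}}\lesssim\|F\|_{L^{a'}(I,\dot{H}^{\delta}_{b'})}$, and the homogeneous estimate for $(p,q)$ maps this into $L^{p}(I,\dot{H}^{\gamma}_{q})$ provided $\delta-\gamma_{a,b}=\gamma+\gamma_{p,q}$, i.e. $\delta=\gamma+\gamma_{p,q}+\gamma_{a,b}$. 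The $-4$ in the statement appears through the fourth-order duality identity $\gamma_{a,b}+\gamma_{a',b'}=-4$, a one-line computation from \eqref{GrindEQ__1_7_}, which rewrites this as $\delta=\gamma+\gamma_{p,q}-\gamma_{a',b'}-4$, exactly as claimed. Finally I would pass from the full-line integral to $\int_{0}^{t}$ by the Christ--Kiselev lemma in the non-endpoint range $p>a'$, invoking the Keel--Tao inhomogeneous bilinear estimate for the remaining endpoint.

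I expect the crux to be the dispersive estimate and the accompanying bookkeeping of the derivative count. The stationary-phase bound must be proved uniformly in $N$ and $t$ with the sharp powers, since it is precisely the $N^{-d}$ gain that becomes the smoothing factor $N^{\gamma_{p,q}}$ after $TT^{*}$; and the endpoint cases — both the Keel--Tao $L^{2}$-endpoint on the Schr\"odinger line and the Christ--Kiselev restriction for the retarded estimate — are where the argument is most delicate. The remaining reductions (the spatial Sobolev embedding, commuting $|\nabla|^{\gamma}$, and the duality identity producing the $-4$) are then routine once the sharp line estimate is established.
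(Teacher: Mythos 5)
Your proposal is correct and follows essentially the argument behind the paper's source for this lemma: the paper itself states Lemma \ref{lem 2.9.} without proof, citing \cite{D18I}, and that reference proves it exactly by your scheme --- frequency-localized stationary-phase dispersive estimate $\|e^{it\Delta^{2}}P_{N}\|_{L^{1}\to L^{\infty}}\lesssim |t|^{-d/2}N^{-d}$, Keel--Tao/$TT^{*}$ on the Schr\"odinger line with the smoothing factor $N^{\gamma_{p,q}}$, Littlewood--Paley summation, Sobolev embedding to reach all of $A$, and duality plus Christ--Kiselev (with the Keel--Tao retarded endpoint) for the Duhamel term. Your exponent bookkeeping checks out as well, in particular $2\gamma_{p,q}=-d(1-2/q)$ on the line $\frac{2}{p}+\frac{d}{q}=\frac{d}{2}$ and the duality identity $\gamma_{a,b}+\gamma_{a',b'}=-4$ that produces the $-4$ in the statement.
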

\section{Estimates of $f(u)-f(v)$}\label{sec 3.}

In this section, we establish the estimates of the term $f(u)-f(v)$, which generalize Corollary 3.5 of \cite{D18I} and Lemmas 3.1--3.4, 3.8--3.11 of \cite{AKC22}.

\begin{lemma}\label{lem 3.1.}
Let $0<s<1$, $\sigma \ge 1$ and $1<p,r<\infty$, $1<q\le \infty$ satisfying $\frac{1}{p}=\frac{1}{r}+\frac{\sigma}{q}$. Assume that $f\in C^{2} \left(\mathbb C\to \mathbb C\right)$ satisfies
\begin{equation} \label{GrindEQ__3_1_}
|f^{(k)} (u)|\lesssim|u|^{\sigma +1-k} ,
\end{equation}
for any $0\le k\le 2$ and $u\in \mathbb C$. Then we have
\begin{eqnarray}\begin{split}\nonumber
\left\| f(u)-f(v)\right\| _{\dot{H}_{p}^{s} } \lesssim&(\left\| u\right\| _{L^q}^{\sigma } +\left\| v\right\| _{L^q}^{\sigma })\left\| u-v\right\| _{\dot{H}_{r}^{s} } \\
&+(\left\| u\right\| _{L^q}^{\sigma-1} +\left\| v\right\| _{L^q}^{\sigma-1})
(\left\| u\right\| _{\dot{H}_{r}^{s}}+\left\| v\right\| _{\dot{H}_{r}^{s}})\left\| u-v\right\| _{L^{q}}.
\end{split}\end{eqnarray}
\end{lemma}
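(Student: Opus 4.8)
The plan is to reduce the estimate to a single application of the fractional product rule (Lemma~\ref{lem 2.5.}) followed by the fractional chain rule (Lemma~\ref{lem 2.4.}), after expressing $f(u)-f(v)$ through the fundamental theorem of calculus. Writing $w_{\theta}=v+\theta(u-v)$ and using the Wirtinger derivatives $\partial_{z}f,\partial_{\bar z}f$, I would first record
\[
f(u)-f(v)=\int_{0}^{1}\big[\partial_{z}f(w_{\theta})\,(u-v)+\partial_{\bar z}f(w_{\theta})\,\overline{(u-v)}\big]\,d\theta.
\]
Since $f\in C^{2}$ with the growth \eqref{GrindEQ__3_1_}, each of $g:=\partial_{z}f$ and $g:=\partial_{\bar z}f$ is a $C^{1}$ function on $\mathbb C$ obeying $|g(z)|\lesssim|z|^{\sigma}$ and $|g'(z)|\lesssim|z|^{\sigma-1}$. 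Because $\|\bar h\|_{\dot{H}_{p}^{s}}=\|h\|_{\dot{H}_{p}^{s}}$, both contributions have the same shape, so it suffices to bound $\|g(w_{\theta})(u-v)\|_{\dot{H}_{p}^{s}}$ uniformly in $\theta\in[0,1]$ and then integrate via Minkowski's inequality.

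For the product $g(w_{\theta})\cdot(u-v)$ I would apply Lemma~\ref{lem 2.5.} with the two splittings $\frac1p=\frac{\sigma}{q}+\frac1r$ and $\frac1p=\frac1a+\frac1q$, where $\frac1a=\frac1r+\frac{\sigma-1}{q}$, obtaining
\[
\|g(w_{\theta})(u-v)\|_{\dot{H}_{p}^{s}}\lesssim \|g(w_{\theta})\|_{L^{q/\sigma}}\|u-v\|_{\dot{H}_{r}^{s}}+\|g(w_{\theta})\|_{\dot{H}_{a}^{s}}\|u-v\|_{L^{q}}.
\]
The first factor in the first term is controlled by $\||w_{\theta}|^{\sigma}\|_{L^{q/\sigma}}=\|w_{\theta}\|_{L^{q}}^{\sigma}$. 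For the $\dot{H}_{a}^{s}$ norm in the second term I would invoke the chain rule (Lemma~\ref{lem 2.4.}), which is available precisely because $0<s<1$, with $\frac1a=\frac{\sigma-1}{q}+\frac1r$; this gives $\|g(w_{\theta})\|_{\dot{H}_{a}^{s}}\lesssim\|g'(w_{\theta})\|_{L^{q/(\sigma-1)}}\|w_{\theta}\|_{\dot{H}_{r}^{s}}\lesssim\|w_{\theta}\|_{L^{q}}^{\sigma-1}\|w_{\theta}\|_{\dot{H}_{r}^{s}}$, using $|g'(z)|\lesssim|z|^{\sigma-1}$. Finally I would bound $\|w_{\theta}\|_{L^{q}}\le\|u\|_{L^{q}}+\|v\|_{L^{q}}$ and $\|w_{\theta}\|_{\dot{H}_{r}^{s}}\le\|u\|_{\dot{H}_{r}^{s}}+\|v\|_{\dot{H}_{r}^{s}}$ uniformly in $\theta$, integrate over $[0,1]$, and use $(x+y)^{\alpha}\lesssim x^{\alpha}+y^{\alpha}$ for $x,y\ge0$, $\alpha\ge0$, to reach the claimed inequality.

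The arithmetic of the exponents is routine; the point requiring genuine care is verifying that every exponent produced above lies in the range demanded by Lemmas~\ref{lem 2.4.} and~\ref{lem 2.5.}. The relation $\frac1p=\frac{\sigma}{q}+\frac1r<1$ forces $\frac{\sigma}{q}<1$, hence $q/\sigma>1$ (admissible, and $=\infty$ only when $q=\infty$), while $\frac1a=\frac1p-\frac1q\in(0,1)$ gives $a\in(1,\infty)$, a legitimate $\dot{H}^{s}$ exponent for the product rule. For the chain rule the exponent $q/(\sigma-1)$ satisfies $q/(\sigma-1)\ge q/\sigma>1$ and equals $\infty$ exactly in the marginal case $\sigma=1$, where $|g'|\lesssim|z|^{0}$ is bounded and $\|g'(w_{\theta})\|_{L^{\infty}}\lesssim1=\|w_{\theta}\|_{L^{q}}^{\sigma-1}$; this is the sole place where $\sigma=1$ (and simultaneously $q=\infty$) must be handled as a limiting case rather than by literal H\"{o}lder splitting. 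Once these endpoint checks are in place, the estimate follows exactly as sketched.
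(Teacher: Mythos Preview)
Your proposal is correct and follows essentially the same approach as the paper's proof: both write $f(u)-f(v)$ via the fundamental theorem of calculus, apply the fractional product rule (Lemma~\ref{lem 2.5.}) with the splittings $\frac1p=\frac{\sigma}{q}+\frac1r$ and $\frac1p=\frac1q+\frac1a$ where $\frac1a=\frac{\sigma-1}{q}+\frac1r$, and then use the fractional chain rule (Lemma~\ref{lem 2.4.}) on $f'(w_\theta)$ to handle the $\dot H^{s}_{a}$ term, treating $\sigma=1$ as the endpoint case $p_3=\infty$. The only cosmetic differences are that the paper simplifies by assuming $f$ real (you keep the Wirtinger derivatives explicit) and that the paper applies the product rule to $(u-v)\int_0^1 f'(w_t)\,dt$ as a whole before invoking Minkowski on the $\dot H^{s}_{a}$ factor, whereas you apply Minkowski first and then the product rule pointwise in $\theta$; neither change affects the argument.
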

\begin{proof}
Without loss of generality and for simplicity, we assume that $f$ is a function of a real variable. We have
\[f(u)-f(v)=\left(u-v\right)\int _{0}^{1}f'\left(v+t\left(u-v\right)\right)dt .\]
Putting
\begin{equation} \label{GrindEQ__3_2_}
\frac{1}{p_{1} } =\frac{\sigma }{q} ,~\frac{1}{p_{2} } =\frac{\sigma-1}{q} +\frac{1}{r},
\end{equation}
it follows from \eqref{GrindEQ__3_1_} and Lemma \ref{lem 2.5.} (fractional product rule) that
\begin{equation}\nonumber
\left\| f(u)-f(v)\right\| _{\dot{H}_{p}^{s} }=\left\| \left(u-v\right)\int _{0}^{1}f'\left(v+t\left(u-v\right)\right)dt \right\| _{\dot{H}_{p}^{s} }\lesssim I_{1} +I_{2},
\end{equation}
where

$$
I_{1}=\left\| \int _{0}^{1}f'\left(v+t\left(u-v\right)\right)dt \right\| _{L^{p_1}} \left\| u-v\right\| _{\dot{H}_{r}^{s} },
$$

$$
I_{2}=\left\| \int _{0}^{1}f'\left(v+t\left(u-v\right)\right)dt \right\| _{\dot{H}_{p_{2} }^{s} } \left\| u-v\right\| _{L^q}.
$$
First, we estimate $I_{1}$. It follows from \eqref{GrindEQ__3_1_} that
\begin{equation} \label{GrindEQ__3_3_}
|f'\left(v+t\left(u-v\right)\right)|\lesssim{\mathop{\max }\limits_{t\in \left[0,\, 1\right]}} \left|v+t\left(u-v\right)\right|^{\sigma } \lesssim |u|^{\sigma } +|v|^{\sigma },
\end{equation}
for any $0\le t \le 1$. Using \eqref{GrindEQ__3_2_} and \eqref{GrindEQ__3_3_}, we have
\begin{eqnarray}\begin{split} \label{GrindEQ__3_4_}
I_{1} &\le \left\| u-v\right\| _{\dot{H}_{r}^{s} }\int _{0}^{1}\left\| f'\left(v+t\left(u-v\right)\right)\right\| _{L^{p_{1}} } dt \\
&\lesssim \left\| |u|^{\sigma }+|v|^{\sigma } \right\| _{p_{1} }  \left\| u-v\right\| _{\dot{H}_{r}^{s} }\le (\left\| u\right\| _{L^q}^{\sigma } +\left\| v\right\| _{L^q}^{\sigma })\left\| u-v\right\| _{\dot{H}_{r}^{s} }.
\end{split}\end{eqnarray}
Next, we estimate $I_{2}$. We have
\begin{equation} \label{GrindEQ__3_5_}
\left\| \int _{0}^{1}f'\left(v+t\left(u-v\right)\right)dt \right\| _{\dot{H}_{p_{2} }^{s} } \le \int _{0}^{1}\left\| f'\left(v+t\left(u-v\right)\right)\right\| _{\dot{H}_{p_{2} }^{s} } dt .
\end{equation}
It also follows from Lemma \ref{lem 2.4.} (fractional chain rule) and \eqref{GrindEQ__3_1_} that
\begin{eqnarray}\begin{split} \label{GrindEQ__3_6_}
\left\| f'\left(v+t\left(u-v\right)\right)\right\| _{\dot{H}_{p_{2} }^{s} } &\lesssim\left\| f''\left(v+t\left(u-v\right)\right)\right\| _{L^{p_{3}}} \left\| v+t\left(u-v\right)\right\| _{\dot{H}_{r}^{s} }.
\end{split}\end{eqnarray}
where $\frac{1}{p_{3} } =\frac{\sigma -1}{q}$ if $\sigma >1$, and $p_{3} =\infty $ if $\sigma =1$. We can also see that
\begin{equation} \label{GrindEQ__3_7_}
|f''\left(v+t\left(u-v\right)\right)|\lesssim{\mathop{\max }\limits_{t\in \left[0,\, 1\right]}} \left|v+t\left(u-v\right)\right|^{\sigma-1} \lesssim |u|^{\sigma-1} +|v|^{\sigma-1},
\end{equation}
and
\begin{equation} \label{GrindEQ__3_8_}
\left\| v+t\left(u-v\right)\right\| _{\dot{H}_{r}^{s} } \le {\mathop{\max }\limits_{t\in \left[0,\, 1\right]}} \left\| I^{s} v+t\left(I^{s} u-I^{s} v\right)\right\| _{L^{r} } \lesssim\left\| u\right\| _{\dot{H}_{r}^{s} } +\left\| v\right\| _{\dot{H}_{r}^{s} },
\end{equation}
for any $0\le t \le 1$.
\eqref{GrindEQ__3_5_}--\eqref{GrindEQ__3_8_} and \eqref{GrindEQ__3_2_} imply that
\begin{eqnarray}\begin{split} \label{GrindEQ__3_9_}
I_{2}&=\left\| \int _{0}^{1}f'\left(v+t\left(u-v\right)\right)dt \right\| _{\dot{H}_{p_{2} }^{s} } \left\| u-v\right\| _{L^q} \\
&\lesssim
(\left\| u\right\| _{L^q}^{\sigma-1} +\left\| v\right\| _{L^q}^{\sigma-1})
(\left\| u\right\| _{\dot{H}_{r}^{s}}+\left\| v\right\| _{\dot{H}_{r}^{s}})\left\| u-v\right\| _{L^{q}} .
\end{split}\end{eqnarray}
In view of \eqref{GrindEQ__3_4_} and \eqref{GrindEQ__3_9_}, we get the desired result.
This completes the proof.
\end{proof}

\begin{lemma}\label{lem 3.2.}
Let $s\ge 1$, $\sigma \ge \left\lceil s\right\rceil -1$ and $1<p,r<\infty$, $1<q\le \infty$ satisfying $\frac{1}{p}=\frac{1}{r}+\frac{\sigma}{q}$.
Assume that $f\in C^{\left\lceil s\right\rceil } \left(\mathbb C\to \mathbb C\right)$ satisfies
\begin{equation} \label{GrindEQ__3_10_}
|f^{(k)} (u)|\lesssim|u|^{\sigma +1-k} ,
\end{equation}
for any $0\le k\le \left\lceil s\right\rceil $ and $u\in \mathbb C$. Assume further
\begin{equation} \label{GrindEQ__3_11_}
|f^{\left(\left\lceil s\right\rceil\right)} (u)-f^{\left(\left\lceil s\right\rceil \right)} (v)|
\lesssim\left|u-v\right|^{\min \{ \sigma -\left\lceil s\right\rceil +1,\;1\} } \left(|u|+|v|\right)^{\max \{ 0,\;\sigma -\left\lceil s\right\rceil \} } ,
\end{equation}
for any $u,\;v\in \mathbb C$. Then we have
\begin{eqnarray}\begin{split}\label{GrindEQ__3_12_}
&\left\| f(u)-f(v)\right\| _{\dot{H}_{p}^{s} } \lesssim(\left\| u\right\| _{L^q}^{\sigma } +\left\| v\right\| _{L^q}^{\sigma })\left\| u-v\right\| _{\dot{H}_{r}^{s} } \\
&~~~~~~+(\left\| u\right\| _{L^q}^{\sigma-1} +\left\| v\right\| _{L^q}^{\sigma-1})
(\left\| u\right\| _{\dot{H}_{r}^{s}}+\left\| v\right\| _{\dot{H}_{r}^{s}})\left\| u-v\right\| _{L^{q}}\\
&~~~~~~+(\left\| u\right\| _{L^q}^{\max \{ \left\lceil s\right\rceil-1 ,\;\sigma-1\} } +\left\| v\right\| _{L^q}^{\max \{ \left\lceil s\right\rceil-1,\;\sigma-1 \} } )\left\|u\right\| _{\dot{H}_{r}^{s}}\left\| u-v\right\| _{L^q}^{\min \{ \sigma -\left\lceil s\right\rceil +1,\;1\} }.
\end{split}\end{eqnarray}
\end{lemma}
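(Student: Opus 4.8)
The plan is to mimic the structure of Lemma \ref{lem 3.1.} but to lift it to the higher-regularity setting $s\ge 1$ by means of the equivalent-norm characterization of $\dot H_p^s$. Writing $m=[s]$ and $v=s-m\in[0,1)$, I would first invoke Lemma \ref{lem 2.7.} to reduce the claim \eqref{GrindEQ__3_12_} to an estimate of $\|D^\alpha(f(u)-f(v))\|_{\dot H_p^v}$ for every multi-index $\alpha$ with $|\alpha|=m$. When $s$ is an integer one has $v=0$ and $\lceil s\rceil=m$, so $\dot H_p^v=L^p$; for non-integer $s$ one has $\lceil s\rceil=m+1$ and $v\in(0,1)$. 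It is worth noting at the outset why the hypothesis \eqref{GrindEQ__3_11_} is needed at all: since $f$ is assumed only of class $C^{\lceil s\rceil}$, one cannot apply Lemma \ref{lem 3.1.} to the composition of $f^{(m)}$ (which would require $f\in C^{m+2}$), and \eqref{GrindEQ__3_11_} is precisely the substitute for the missing extra derivative.

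The next step is to expand $D^\alpha f(u)$ by the Fa\`a di Bruno formula as a finite linear combination of terms $f^{(k)}(u)\prod_{j=1}^{k}\partial^{\gamma_j}u$ with $1\le k\le m$, $|\gamma_j|\ge 1$ and $\sum_j|\gamma_j|=m$. Subtracting the corresponding expansion at $v$ and telescoping, I would split each term into a chain-rule part $[f^{(k)}(u)-f^{(k)}(v)]\prod_j\partial^{\gamma_j}u$ and a product part $f^{(k)}(v)[\prod_j\partial^{\gamma_j}u-\prod_j\partial^{\gamma_j}v]$, the latter telescoped factor by factor. Each piece is then measured in $\dot H_p^v$ by distributing the $v$ derivatives through the fractional product rule (Lemma \ref{lem 2.5.}, valid for all $s\ge 0$) and, wherever a fractional exponent lands on a composition $f^{(k)}(w)$ with $k\le m$ and hence $f^{(k)}\in C^1$, by the fractional chain rule (Lemma \ref{lem 2.4.}) together with the growth bound \eqref{GrindEQ__3_10_}. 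The intermediate derivative norms of $u$ of order $|\gamma_j|$ generated in this way are recombined into the single full-order norm $\|u\|_{\dot H_r^s}$ (times powers of $\|u\|_{L^q}$) via the convexity interpolation inequality (Lemma \ref{lem 2.8.}) and the Sobolev embeddings (Lemma \ref{lem 2.1.} and Corollary \ref{cor 2.3.}); throughout, all Lebesgue exponents are forced to close by the scaling relation $\tfrac1p=\tfrac1r+\tfrac{\sigma}{q}$. The product-difference of the highest derivative yields the first summand of \eqref{GrindEQ__3_12_} carrying $\|u-v\|_{\dot H_r^s}$, while the product differences of intermediate derivatives and the lower-order ($k<\lceil s\rceil$) chain-rule differences — for which the mean value theorem applies since $f^{(k)}$ is Lipschitz with $|f^{(k+1)}|\lesssim(|u|+|v|)^{\sigma-k}$ — yield the second summand carrying $\|u-v\|_{L^q}$ to the first power.

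The main obstacle is the single top-order factor in which the full $\lceil s\rceil$-th derivative of $f$ is compared at $u$ and $v$. For integer $s$ this is the $k=m$ term $[f^{(m)}(u)-f^{(m)}(v)]\prod_j\partial_{i_j}u$; for non-integer $s$ it is the $\dot H^v$ measurement of $f^{(m)}(u)-f^{(m)}(v)$ produced when the product rule places the fractional derivative on the composition. In the borderline regime $\lceil s\rceil-1\le\sigma<\lceil s\rceil$ the function $f^{(\lceil s\rceil)}$ is only H\"older continuous (not Lipschitz), so the hypothesis \eqref{GrindEQ__3_11_} must be used here in place of a further derivative of $f$, and it is exactly this H\"older modulus that produces the fractional power $\|u-v\|_{L^q}^{\min\{\sigma-\lceil s\rceil+1,1\}}$ together with the companion factor $(\|u\|_{L^q}+\|v\|_{L^q})^{\max\{\lceil s\rceil-1,\sigma-1\}}$ in the third summand of \eqref{GrindEQ__3_12_}. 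Executing this H\"older-type bound inside the fractional space $\dot H_p^v$ for non-integer $s$, and verifying that the exponent bookkeeping there stays consistent with $\tfrac1p=\tfrac1r+\tfrac{\sigma}{q}$, is the delicate part of the argument; once the telescoping is organized and this one term is controlled, the remaining pieces follow from routine applications of H\"older's inequality and Lemma \ref{lem 2.8.}.
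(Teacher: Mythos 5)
Your proposal follows essentially the same route as the paper's proof: reduction via the equivalent-norm characterization of Lemma \ref{lem 2.7.}, the Leibniz/Fa\`a di Bruno expansion \eqref{GrindEQ__3_14_}, the split into the chain-rule part $I_k$ and the telescoped product part $II_k$ of \eqref{GrindEQ__3_16_}, the fractional product and chain rules (Lemmas \ref{lem 2.5.} and \ref{lem 2.4.}) combined with the convexity inequality (Lemma \ref{lem 2.8.}) to recombine the intermediate derivatives into $\left\|u\right\|_{\dot{H}_{r}^{s}}$, and the hypothesis \eqref{GrindEQ__3_11_} invoked precisely at the top-order term to generate the H\"older-type third summand of \eqref{GrindEQ__3_12_}. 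The one sub-step you flag as delicate but leave unexecuted — controlling $\left\|f^{(k)}(u)-f^{(k)}(v)\right\|_{\dot{H}_{p_{2_k}}^{v}}$ for non-integer $s$, where no chain rule for differences is available — is resolved in the paper by interpolating this norm between $L^{p_{1_k}}$ and $\dot{H}_{p_{3_k}}^{1}$ as in \eqref{GrindEQ__3_38_}, reducing the $\dot{H}^{1}$ norm to first derivatives via Lemma \ref{lem 2.7.}, and then running the mean-value and H\"older-modulus estimates \eqref{GrindEQ__3_40_}--\eqref{GrindEQ__3_43_} and \eqref{GrindEQ__3_50_}--\eqref{GrindEQ__3_52_}, which is exactly the mechanism your outline anticipates.
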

\begin{proof}
If follows from Lemma \ref{lem 2.7.} that
\begin{equation} \label{GrindEQ__3_13_}
\left\| f(u)-f(v)\right\| _{\dot{H}_{p}^{s} } \lesssim\sum _{|\alpha |=\left[s\right]}\left\| D^{\alpha } f(u)-D^{\alpha } f(v)\right\| _{\dot{H}_{p}^{v} }  ,
\end{equation}
where $v=s-\left[s\right]$. Without loss of generality and for simplicity, we assume that $f$ is a function of a real variable. It follows from the Leibniz rule of derivatives that
\begin{equation} \label{GrindEQ__3_14_}
D^{\alpha } f(u)=\sum _{k=1}^{|\alpha |}\sum _{\Lambda _{\alpha }^{k} }C_{\alpha ,\;k} f^{(k)} (u)\prod _{i=1}^{k}D^{\alpha _{i} } u
\end{equation}
where $\Lambda _{\alpha }^{k} =\left(\alpha _{1} +\cdots +\alpha _{k} =\alpha ,\;|\alpha _{i}|\ge 1\right)$.
Hence, we have
\begin{eqnarray}\begin{split} \label{GrindEQ__3_15_}
\left\| D^{\alpha } f(u)-D^{\alpha } f(v)\right\| _{\dot{H}_{p}^{v} } &=\left\| \sum _{k=1}^{|\alpha |}\sum _{\Lambda _{\alpha }^{k} }C_{\alpha ,\;k} \left(f^{(k)} (u)\prod _{i=1}^{k}D^{\alpha _{i} } u -f^{(k)} (v)\prod _{i=1}^{k}D^{\alpha _{i} } v \right)  \right\| _{\dot{H}_{p}^{v} } \\
&\le \sum _{k=1}^{|\alpha |}\sum _{\Lambda _{\alpha }^{k} }C_{\alpha ,\;k} \left\| f^{(k)} (u)\prod _{i=1}^{k}D^{\alpha _{i} } u -f^{(k)} (v)\prod _{i=1}^{k}D^{\alpha _{i} } v \right\| _{\dot{H}_{p}^{v} }    \\
&\le \sum _{k=1}^{|\alpha |}\sum _{\Lambda _{\alpha }^{k} }C_{\alpha ,\;k} (\left\| I_{k} \right\| _{\dot{H}_{p}^{v} } +\left\| II_{k} \right\| _{\dot{H}_{p}^{v} }) ,
\end{split}\end{eqnarray}
where
\begin{equation} \label{GrindEQ__3_16_}
I_{k} =\left(f^{(k)} (u)-f^{(k)} (v)\right)\prod _{i=1}^{k}D^{\alpha _{i} } u ,~II_{k} =f^{(k)} (v)\left(\prod _{i=1}^{k}D^{\alpha _{i} } u -\prod _{i=1}^{k}D^{\alpha _{i} } v \right).
\end{equation}

We divide the study in two cases: $s\in \mathbb N$ and $s\notin \mathbb N$.

\textbf{Case 1.} First, we consider the case $s\in \mathbb N$, i.e. $v=0$.

\emph{Step 1.1}. First, we estimate $\left\| I_{k} \right\| _{p} $, where $I_{k}$ is given in \eqref{GrindEQ__3_16_}.
Using \eqref{GrindEQ__3_10_}, H\"{o}lder inequality, Lemmas \ref{lem 2.7.} and \ref{lem 2.8.}, we have
\begin{eqnarray}\begin{split}\label{GrindEQ__3_17_}
I_{k}& \le \left\| f^{(k)} (u)-f^{(k)} (v)\right\| _{L^{p_{1_k}}} \prod_{i=1}^{k}{\left\|D^{\alpha _{i} } u\right\|_{L^{\alpha_{k_i}}}}\\
&\lesssim \left\| f^{(k)} (u)-f^{(k)} (v)\right\| _{L^{p_{1_k}}}
\prod_{i=1}^{k} (\left\|u\right\| _{\dot{H}_{r}^{s}}^{\frac{|\alpha_{i}|}{s}}
\left\|u\right\| _{L^{q}}^{1-\frac{|\alpha_{i}|}{s}})\\
& \lesssim \left\|f^{(k)} (u)-f^{(k)} (v)\right\| _{L^{p_{1_k}}}\left\|u\right\| _{\dot{H}_{r}^{s}} \left\|u\right\| _{L^{q}}^{k-1},
\end{split}\end{eqnarray}
where
\begin{equation} \label{GrindEQ__3_18_}
\frac{1}{p_{1_k}}:=\frac{\sigma+1-k}{q},~\frac{1}{\alpha_{k_i}}:=\frac{|\alpha_{i}|}{s}\frac{1}{r}+\left(1-\frac{|\alpha_{i}|}{s}\right)\frac{1}{q}.
\end{equation}

$\cdot$ If $k=|\alpha |=\left\lceil s\right\rceil $, then it follows from \eqref{GrindEQ__3_11_} and H\"{o}lder inequality that
\begin{eqnarray}\begin{split} \label{GrindEQ__3_19_}
\left\| f^{\left(\left\lceil s\right\rceil \right)} (u)-f^{\left(\left\lceil s\right\rceil \right)} (v)\right\| _{L^{p_{1_k}}} &\lesssim\left\| \left|u-v\right|^{\min \{ \sigma -\left\lceil s\right\rceil +1,\;1\} } \left(|u|+|v|\right)^{\max \{ 0,\;\sigma -\left\lceil s\right\rceil \} } \right\| _{L^{p_{1_k}}}  \\
&\lesssim\left\| u-v\right\| _{L^q}^{\min \{ \sigma -\left\lceil s\right\rceil +1,\;1\} } (\left\| u\right\|_{L^q}+\left\|v\right\|_{L^q})^{\max \{ 0,\;\sigma -\left\lceil s\right\rceil \} }.
\end{split}\end{eqnarray}
In view of \eqref{GrindEQ__3_17_} and \eqref{GrindEQ__3_19_}, we have
\begin{equation} \label{GrindEQ__3_20_}
\left\| I_{k} \right\| _{L^p} \lesssim\left\| u-v\right\| _{L^q}^{\min \{ \sigma -\left\lceil s\right\rceil +1,\;1\} }(\left\| u\right\| _{L^q}^{\max \{ \left\lceil s\right\rceil-1 ,\;\sigma-1\} } +\left\| v\right\| _{L^q}^{\max \{ \left\lceil s\right\rceil-1,\;\sigma-1 \} } )\left\|u\right\| _{\dot{H}_{r}^{s}} .
\end{equation}

$\cdot$ If $k<|\alpha |=\left\lceil s\right\rceil $, then we have
\begin{eqnarray}\begin{split} \label{GrindEQ__3_21_}
\left\| f^{(k)} (u)-f^{(k)} (v)\right\| _{L^{p_{1_k}}}&=\left\| \left(u-v\right)\int _{0}^{1}f^{\left(k+1\right)} \left(v+t\left(u-v\right)\right)dt \right\| _{L^{p_{1_k}}}\\
&\lesssim \left\| \left(u-v\right)\int _{0}^{1}\left|v+t\left(u-v\right)\right|^{\sigma -k} dt \right\| _{L^{p_{1_k}}}\\
&\lesssim \left\| u-v\right\| _{L^{q} } (\left\| u\right\| _{L^{q}}^{\sigma -k} +\left\| v\right\| _{L^{q}}^{\sigma -k}).
\end{split}\end{eqnarray}
In view of \eqref{GrindEQ__3_17_} and \eqref{GrindEQ__3_21_}, we have
\begin{equation} \label{GrindEQ__3_22_}
\left\| I_{k} \right\| _{L^p} \lesssim(\left\| u\right\| _{L^{q}}^{\sigma-1} +\left\| v\right\| _{L^{q}}^{\sigma-1})\left\|u\right\|_{\dot{H}_r^s}\left\| u-v\right\| _{\dot{H}_{r}^{s} } .
\end{equation}
Hence we have
\begin{eqnarray}\begin{split} \label{GrindEQ__3_23_}
&\left\| I_{k} \right\| _{L^p} \lesssim
(\left\| u\right\| _{L^q}^{\sigma-1} +\left\| v\right\| _{L^q}^{\sigma-1})
(\left\| u\right\| _{\dot{H}_{r}^{s}}+\left\| v\right\| _{\dot{H}_{r}^{s}})\left\| u-v\right\| _{L^{q}}\\
&~~~~~~+(\left\| u\right\| _{L^q}^{\max \{ \left\lceil s\right\rceil-1 ,\;\sigma-1\} } +\left\| v\right\| _{L^q}^{\max \{ \left\lceil s\right\rceil-1,\;\sigma-1 \} } )\left\|u\right\| _{\dot{H}_{r}^{s}}\left\| u-v\right\| _{L^q}^{\min \{ \sigma -\left\lceil s\right\rceil +1,\;1\} }.
\end{split}\end{eqnarray}
for any $1\le k\le s=\left\lceil s\right\rceil $.

\emph{Step 1.2.} Next, we estimate $\left\| II_{k} \right\| _{L^p}$, where $II_{k}$ is given in \eqref{GrindEQ__3_16_}.
If $k=1$, it follows directly from \eqref{GrindEQ__3_10_} and H\"{o}lder inequality that
\begin{eqnarray}\begin{split}\label{GrindEQ__3_24_}
\left\|II_{1}\right\|_{L^p} &=\left\|f'(v)(D^{\alpha } u -D^{\alpha} v )\right\|_{L^p}
\lesssim \left\|v\right\|_{L^q}^{\sigma}\left\|D^{\alpha } (u -v) \right\|_{L^r}
\lesssim \left\|v\right\|_{L^q}^{\sigma}\left\|u - v \right\|_{\dot{H}_r^{s}}.
\end{split}\end{eqnarray}
Next, we consider the case $k\ge 2$. Noticing that
\footnote[1]{We assume that $\prod _{j=1}^{0}a_{i}  =\prod _{j=N+1}^{N}b_{i}  =0$.}
\begin{equation} \label{GrindEQ__3_25_}
\prod _{i=1}^{N}a_{i}  -\prod _{i=1}^{N}b_{i}  =\sum _{i=1}^{N}\prod _{j=1}^{i-1}a_{j}  \prod _{j=i+1}^{N}b_{j}  \left(a_{i} -b_{i} \right) ,
\end{equation}
we have
\begin{eqnarray}\begin{split} \label{GrindEQ__3_26_}
\left\| II_{k} \right\| _{L^p} &=\left\| f^{(k)} (v)\left(\prod _{i=1}^{k}D^{\alpha _{i} } u -\prod _{i=1}^{k}D^{\alpha _{i} } v \right)\right\| _{L^p}\\
&\le \sum _{i=1}^{k}\left\| f^{(k)} (v)\left(\prod _{j=1}^{i-1}D^{\alpha _{j} } u \prod _{j=i+1}^{k}D^{\alpha _{j} } v \left(D^{\alpha _{i} } u-D^{\alpha _{i} } v\right)\right)\right\| _{L^p}  .
\end{split}\end{eqnarray}
It follows from \eqref{GrindEQ__3_10_}, \eqref{GrindEQ__3_18_}, H\"{o}lder inequality and Lemma \ref{lem 2.7.} that
\begin{eqnarray}\begin{split} \label{GrindEQ__3_27_}
&\left\| f^{(k)} (v)\left(\prod _{j=1}^{i-1}D^{\alpha _{j} } u \prod _{j=i+1}^{k}D^{\alpha _{j} } v \left(D^{\alpha _{i} } u-D^{\alpha _{i} } v\right)\right)\right\| _{L^p}\\
&\lesssim \left\| u\right\| _{L^q}^{\sigma +1-k} \left\| u-v\right\| _{\dot{H}_{\alpha_{k_i} }^{|\alpha _{i}|} } \prod _{j=1}^{i-1}\left\| u\right\| _{\dot{H}_{\alpha_{k_j} }^{|\alpha _{j}|} }  \prod _{j=i+1}^{k}\left\| v\right\| _{\dot{H}_{\alpha_{k_j} }^{|\alpha _{j}|} }\\
&\lesssim \left\| u\right\| _{L^q}^{\sigma +1-k} \left\|u-v\right\| _{\dot{H}_{r}^{s}}^{\frac{|\alpha_{i}|}{s}}\left\|u-v\right\| _{L^{q}}^{1-\frac{|\alpha_{i}|}{s}}
\prod_{j=1}^{i-1} (\left\|u\right\| _{\dot{H}_{r}^{s}}^{\frac{|\alpha_{j}|}{s}}\left\|u\right\| _{L^{q}}^{1-\frac{|\alpha_{j}|}{s}})
\prod_{j=i+1}^{k} (\left\|v\right\| _{\dot{H}_{r}^{s}}^{\frac{|\alpha_{j}|}{s}}\left\|v\right\| _{L^{q}}^{1-\frac{|\alpha_{j}|}{s}})\\
&\lesssim \left[(\left\|u\right\| _{\dot{H}_{r}^{s}}+\left\|v\right\| _{\dot{H}_{r}^{s}})\left\|u-v\right\|_{L^q}\right]^{1-\frac{|\alpha_{i}|}{s}}
\left[\left\|u-v\right\| _{\dot{H}_{r}^{s}}(\left\|u\right\|_{L^q}+\left\|v\right\|_{L^q})\right]^{\frac{|\alpha_{i}|}{s}}
(\left\|u\right\|_{L^q}+\left\|v\right\|_{L^q})^{\sigma-1}\\
&\lesssim(\left\| u\right\| _{L^q}^{\sigma } +\left\| v\right\| _{L^q}^{\sigma })\left\| u-v\right\| _{\dot{H}_{r}^{s}}+
(\left\| u\right\| _{L^q}^{\sigma-1} +\left\| v\right\| _{L^q}^{\sigma-1})
(\left\| u\right\| _{\dot{H}_{r}^{s}}+\left\| v\right\| _{\dot{H}_{r}^{s}})\left\| u-v\right\| _{L^{q}},
\end{split}\end{eqnarray}
where the last inequality follows from the fact:
$$\beta_{1}^{\gamma_1}\beta_{2}^{\gamma_2}\le (\beta_{1}+\beta_{2})^{\gamma_1+\gamma_2},~ \textrm{for}~\beta_{1},\beta_{2},\gamma_1,\gamma_2>0.$$
In view of \eqref{GrindEQ__3_24_}, \eqref{GrindEQ__3_26_} and \eqref{GrindEQ__3_27_}, we have
\begin{equation} \label{GrindEQ__3_28_}
\left\| II_{k} \right\| _{L^p} \lesssim(\left\| u\right\| _{L^q}^{\sigma } +\left\| v\right\| _{L^q}^{\sigma })\left\| u-v\right\| _{\dot{H}_{r}^{s}}+
(\left\| u\right\| _{L^q}^{\sigma-1} +\left\| v\right\| _{L^q}^{\sigma-1})
(\left\| u\right\| _{\dot{H}_{r}^{s}}+\left\| v\right\| _{\dot{H}_{r}^{s}})\left\| u-v\right\| _{L^{q}},
\end{equation}
for any $1\le k\le s=\left\lceil s\right\rceil $.
\eqref{GrindEQ__3_12_} follows directly from \eqref{GrindEQ__3_23_} and \eqref{GrindEQ__3_28_}.
This completes the proof of \eqref{GrindEQ__3_17_} in the case $s\in \mathbb N$.

\textbf{Case 2.} We consider the case $s\notin \mathbb N$.

\emph{Step 2.1.} First, we estimate $\left\| I_{k} \right\| _{\dot{H}_{p}^{v} } $, where $I_{k}$ is given in \eqref{GrindEQ__3_16_}.

It follows from Lemma \ref{lem 2.5.} (fractional product rule) that
\begin{eqnarray}\begin{split} \label{GrindEQ__3_29_}
\left\| I_{k} \right\| _{\dot{H}_{p}^{v} } &\lesssim\left\| f^{(k)} (u)-f^{(k)} (v)\right\| _{p_{1_k} } \left\| \prod _{i=1}^{k}D^{\alpha _{i} } u \right\| _{\dot{H}_{a_{k} }^{v} } +\left\| f^{(k)} (u)-f^{(k)} (v)\right\| _{\dot{H}_{p_{2_k} }^{v} } \left\| \prod _{i=1}^{k}D^{\alpha _{i} } u \right\| _{b_{k} }\\
&\equiv I_{k_1} +I_{k_2} ,
\end{split}\end{eqnarray}
where
\begin{equation} \label{GrindEQ__3_30_}
\frac{1}{a_{k} }:=\frac{1}{r}+\frac{k-1}{q},~\frac{1}{p_{2_k} }: =\frac{\sigma-k}{q}+\frac{v}{rs}+\frac{[s]}{qs},~\frac{1}{b_{k}} :=\frac{[s]}{rs}+\left(k-\frac{[s]}{s}\right)\frac{1}{q},
\end{equation}
and $p_{1_k}$ is given in \eqref{GrindEQ__3_18_}.

First, we estimate $I_{k_1}$. If $k=1$, we can see that $\left|\alpha _{1} \right|=[s]$ and $a_{k} =r$. Hence, we immediately get
\[\left\| D^{\alpha _{1}} u\right\| _{\dot{H}_{a_{k}}^{v} } \lesssim\left\| u\right\| _{\dot{H}_{r}^{s} } .\]
We consider the case $k>1$. For $1\le i\le k$, putting
\begin{equation} \label{GrindEQ__3_31_}
\frac{1}{\tilde{\alpha}_{k_i}}:=\frac{|\alpha_{i}|+v}{s}\frac{1}{r}+\left(1-\frac{|\alpha_{i}|+v}{s}\right)\frac{1}{q},
\end{equation}
we can see that
\begin{equation} \label{GrindEQ__3_32_}
\frac{1}{a_{k} } =\sum _{j\in I_{k}^{i} }\frac{1}{\alpha_{k_j} }  +\frac{1}{\tilde{\alpha}_{k_i} },
\end{equation}
where $I_{k}^{i} =\left\{j\in \mathbb N:\;1\le j\le k,\;j\ne i\right\}$ and $\alpha_{k_j}$ is given in \eqref{GrindEQ__3_18_}.
By using Corollary \ref{cor 2.6.}, Lemma \ref{lem 2.8.} and \eqref{GrindEQ__3_32_}, we have
\begin{eqnarray}\begin{split} \label{GrindEQ__3_33_}
\left\| \prod _{i=1}^{k}D^{\alpha _{i} } u \right\| _{\dot{H}_{a_k}^{v}}
&\lesssim \sum _{i=1}^{k}(\left\| D^{\alpha _{i} } u\right\| _{\dot{H}_{\tilde{\alpha}_{k_i}}^{v} } \prod _{j\in I_{k}^{i} }\left\| D^{\alpha _{j} } u \right\| _{L^{\alpha_{k_j}}})\\
&\lesssim\sum _{i=1}^{k}(\left\| u\right\| _{\dot{H}_{\tilde{\alpha}_{k_i}}^{|\alpha _{i}|+v} } \prod _{j\in I_{k}^{i} }\left\| u\right\| _{\dot{H}_{\alpha_{k_j}}^{|\alpha _{j}|} })\\
&\lesssim \sum _{i=1}^{k} \left[\left\| u\right\| _{\dot{H}_{r}^{s} }^{\frac{|\alpha _{i}|+v}{s}}\left\|u\right\|_{L^{q}}^{1-\frac{|\alpha _{i}|+v}{s}}
\prod _{j\in I_{k}^{i} }(\left\| u\right\| _{\dot{H}_{r}^{s} }^{\frac{|\alpha _{j}|}{s}}\left\|u\right\|_{L^{q}}^{1-\frac{|\alpha _{j}|}{s}})\right]\\
&=\left\| u\right\| _{\dot{H}_{r}^{s} }\left\|u\right\|_{L^{q}}^{k-1}.
\end{split}\end{eqnarray}
Hence, for any $1\le k\le [s]$, we have
\begin{equation} \label{GrindEQ__3_34_}
I_{k_1} \lesssim \left\| f^{(k)} (u)-f^{(k)} (v)\right\| _{L^{p_{1_k}}} \left\| u\right\| _{\dot{H}_{r}^{s} }\left\|u\right\|_{L^{q}}^{k-1}.
\end{equation}
Since $k\le [s]<\left\lceil s\right\rceil $, it follows from \eqref{GrindEQ__3_10_} and H\"{o}lder inequality that
\begin{eqnarray}\begin{split} \label{GrindEQ__3_35_}
\left\| f^{(k)} (u)-f^{(k)} (v)\right\| _{L^{p_{1_k}}}&=\left\| \left(u-v\right)\int _{0}^{1}f^{\left(k+1\right)} \left(v+t\left(u-v\right)\right)dt \right\| _{L^{p_{1_k}}}\\
&\lesssim \left\| \left(u-v\right)\int _{0}^{1}\left|v+t\left(u-v\right)\right|^{\sigma -k} dt \right\| _{L^{p_{1_k}}}\\
&\lesssim \left\| u-v\right\| _{L^{q} } (\left\| u\right\| _{L^{q}}^{\sigma -k} +\left\| v\right\| _{L^{q}}^{\sigma -k}).
\end{split}\end{eqnarray}
In view of \eqref{GrindEQ__3_34_} and \eqref{GrindEQ__3_35_}, we immediately get
\begin{eqnarray}\begin{split} \label{GrindEQ__3_36_}
I_{k_1}\lesssim
(\left\| u\right\| _{L^q}^{\sigma-1} +\left\| v\right\| _{L^q}^{\sigma-1})
\left\| u\right\| _{\dot{H}_{r}^{s}}\left\| u-v\right\| _{L^{q}}.
\end{split}\end{eqnarray}

Next, we estimate $I_{k_2}$. Using \eqref{GrindEQ__3_18_}, \eqref{GrindEQ__3_30_}, H\"{o}lder inequality and Lemma \ref{lem 2.8.}, we have
\begin{eqnarray}\begin{split} \label{GrindEQ__3_37_}
\left\| \prod _{i=1}^{k}D^{\alpha _{i} } u \right\| _{L^{b_k}}
&\lesssim \prod _{i=1}^{k} \left\| D^{\alpha _{i} } u \right\| _{L^{\alpha_{k_i}}}
\lesssim \prod _{i=1}^{k}(\left\| u\right\| _{\dot{H}_{r}^{s} }^{\frac{|\alpha _{i}|}{s}}\left\|u\right\|_{L^{q}}^{1-\frac{|\alpha _{i}|}{s}})
=\left\| u\right\| _{\dot{H}_{r}^{s} }^{\frac{[s]}{s}}\left\|u\right\|_{L^{q}}^{k-\frac{[s]}{s}}.
\end{split}\end{eqnarray}
It also follows from \eqref{GrindEQ__3_30_} and Lemma \ref{lem 2.8.} that
\begin{eqnarray}\begin{split} \label{GrindEQ__3_38_}
\left\| f^{(k)} (u)-f^{(k)} (v)\right\| _{\dot{H}_{p_{2_k} }^{v} }&\le \left\| f^{(k)} (u)-f^{(k)} (v)\right\| _{L^{p_{1_k}}}^{1-v}
\left\| f^{(k)} (u)-f^{(k)} (v)\right\| _{\dot{H}_{p_{3_k} }^{1} }^{v},
\end{split}\end{eqnarray}
where
\begin{eqnarray}\begin{split} \label{GrindEQ__3_39_}
\frac{1}{p_{3_k}}:=\left(\sigma-k+1-\frac{1}{s}\right)\frac{1}{q}+\frac{1}{rs}.
\end{split}\end{eqnarray}
Using Lemma \ref{lem 2.7.}, we can get
\begin{eqnarray}\begin{split} \label{GrindEQ__3_40_}
\left\| f^{(k)} (u)-f^{(k)} (v)\right\| _{\dot{H}_{p_{3_k} }^{1} }&\le \sum_{j=1}^{d}\left\| f^{(k+1)}(u)\partial_{j}u-f^{(k+1)} (v)\partial_{j}v\right\| _{L^{p_{3_k} }}.
\end{split}\end{eqnarray}
Putting
\begin{eqnarray}\begin{split} \label{GrindEQ__3_41_}
\frac{1}{p_{4_k}}:=\frac{\sigma-k}{q},~\frac{1}{p_{5_k}}:=\frac{1}{rs}+\left(1-\frac{1}{s}\right)\frac{1}{q},
\end{split}\end{eqnarray}
it follows from \eqref{GrindEQ__3_10_}, \eqref{GrindEQ__3_39_}, H\"{o}lder inequality and Lemma \ref{lem 2.8.} that
\begin{eqnarray}\begin{split} \label{GrindEQ__3_42_}
&\left\| f^{(k+1)}(u)\partial_{j}u-f^{(k+1)} (v)\partial_{j}v\right\| _{L^{p_{3_k} }}\\
&~~~\le \left\| f^{(k+1)}(u)-f^{(k+1)}(v)\right\|_{L^{p_{4_{k}}}}\left\|\partial_{j}u\right\| _{L^{p_{5_k} }}+
\left\| f^{(k+1)}(v)\right\|_{L^{p_{4_{k}}}}\left\|\partial_{j}u-\partial_{j}v\right\| _{L^{p_{5_k}}}\\
&~~~\le \left\| f^{(k+1)}(u)-f^{(k+1)}(v)\right\|_{L^{p_{4_{k}}}}\left\|u\right\| _{\dot{H}_{r}^{s}}^{\frac{1}{s}}\left\|u\right\|_{L^q}^{1-\frac{1}{s}}+
\left\|v\right\|_{L^{q}}^{\sigma-k}\left\|u-v\right\| _{\dot{H}_{r}^{s}}^{\frac{1}{s}}\left\|u-v\right\| _{L^{q}}^{1-\frac{1}{s}}.
\end{split}\end{eqnarray}

$\cdot$ If $k<[s]$, then it follows from \eqref{GrindEQ__3_10_} and H\"{o}lder inequality
\begin{eqnarray}\begin{split} \label{GrindEQ__3_43_}
\left\| f^{(k+1)} (u)-f^{(k+1)} (v)\right\| _{L^{p_{4_k}}}&=\left\| \left(u-v\right)\int _{0}^{1}f^{(k+2)} \left(v+t\left(u-v\right)\right)dt \right\| _{L^{p_{4_k}}}\\
&\lesssim \left\| \left(u-v\right)\int _{0}^{1}\left|v+t\left(u-v\right)\right|^{\sigma -k-1} dt \right\| _{L^{p_{4_k}}}\\
&\lesssim \left\| u-v\right\| _{L^{q} } (\left\| u\right\| _{L^{q}}^{\sigma -k-1} +\left\| v\right\| _{L^{q}}^{\sigma -k-1}).
\end{split}\end{eqnarray}
In view of \eqref{GrindEQ__3_35_}, \eqref{GrindEQ__3_37_}--\eqref{GrindEQ__3_43_}, we have
\begin{eqnarray}\begin{split}\label{GrindEQ__3_44_}
I_{k_2}&=\left\| f^{(k)} (u)-f^{(k)} (v)\right\| _{\dot{H}_{p_{2_k} }^{v} } \left\| \prod _{i=1}^{k}D^{\alpha _{i} } u \right\| _{b_{k} }\\
&\lesssim \left\| u\right\| _{\dot{H}_{r}^{s} }^{\frac{[s]}{s}}\left\|u\right\|_{L^{q}}^{k-\frac{[s]}{s}}
\left\| f^{(k)} (u)-f^{(k)} (v)\right\| _{L^{p_{1_k}}}^{1-v}
(A+B)^{v}\\
&\lesssim \left\| u\right\| _{\dot{H}_{r}^{s} }^{\frac{[s]}{s}}\left\|u\right\|_{L^{q}}^{k-\frac{[s]}{s}}
\left[\left\| u-v\right\| _{L^{q} } (\left\| u\right\| _{L^{q}}^{\sigma -k} +\left\| v\right\| _{L^{q}}^{\sigma -k})\right]^{1-v}
(A^v+B^v),
\end{split}\end{eqnarray}
where
\begin{eqnarray}\begin{split}\label{GrindEQ__3_45_}
A:=\left\| u-v\right\| _{L^{q} } (\left\| u\right\| _{L^{q}}^{\sigma -k-\frac{1}{s}}+\left\| v\right\| _{L^{q}}^{\sigma -k-\frac{1}{s}})\left\|u\right\| _{\dot{H}_{r}^{s}}^{\frac{1}{s}},~B:=\left\|v\right\|_{L^{q}}^{\sigma-k}\left\|u-v\right\| _{\dot{H}_{r}^{s}}^{\frac{1}{s}}\left\|u-v\right\| _{L^{q}}^{1-\frac{1}{s}}.
\end{split}\end{eqnarray}
An easy computation shows that
\begin{eqnarray}\begin{split}\label{GrindEQ__3_46_}
\left\| u\right\| _{\dot{H}_{r}^{s} }^{\frac{[s]}{s}}\left\|u\right\|_{L^{q}}^{k-\frac{[s]}{s}}
&\left[\left\| u-v\right\| _{L^{q} } (\left\| u\right\| _{L^{q}}^{\sigma -k} +\left\| v\right\| _{L^{q}}^{\sigma -k})\right]^{1-v}A^v\\
&\lesssim \left\| u\right\| _{\dot{H}_{r}^{s} }(\left\| u\right\| _{L^{q}}^{\sigma -1} +\left\| v\right\| _{L^{q}}^{\sigma -1})\left\| u-v\right\| _{L^{q} }
\end{split}\end{eqnarray}
and
\begin{eqnarray}\begin{split}\label{GrindEQ__3_47_}
\left\| u\right\| _{\dot{H}_{r}^{s} }^{\frac{[s]}{s}}&\left\|u\right\|_{L^{q}}^{k-\frac{[s]}{s}}
\left[\left\| u-v\right\| _{L^{q} } (\left\| u\right\| _{L^{q}}^{\sigma -k} +\left\| v\right\| _{L^{q}}^{\sigma -k})\right]^{1-v}B^v\\
&\lesssim (\left\| u\right\| _{L^{q}}^{\sigma -1} +\left\| v\right\| _{L^{q}}^{\sigma -1})(\left\|u\right\|_{L^{q}}\left\| u-v\right\| _{\dot{H}_{r}^{s} })^{\frac{v}{s}}(\left\| u\right\| _{\dot{H}_{r}^{s} }\left\| u-v\right\| _{L^{q}})^{\frac{[s]}{s}}\\
&\lesssim  (\left\| u\right\| _{L^{q}}^{\sigma -1} +\left\| v\right\| _{L^{q}}^{\sigma -1})(\left\|u\right\|_{L^{q}}\left\| u-v\right\| _{\dot{H}_{r}^{s} }+\left\| u\right\| _{\dot{H}_{r}^{s} }\left\| u-v\right\| _{L^{q}})\\
&\lesssim (\left\| u\right\| _{L^{q}}^{\sigma} +\left\| v\right\| _{L^{q}}^{\sigma})\left\| u-v\right\| _{\dot{H}_{r}^{s} }
+(\left\| u\right\| _{L^{q}}^{\sigma -1} +\left\| v\right\| _{L^{q}}^{\sigma -1})\left\| u\right\| _{\dot{H}_{r}^{s} }\left\| u-v\right\| _{L^{q}}.
\end{split}\end{eqnarray}
\eqref{GrindEQ__3_44_}--\eqref{GrindEQ__3_47_} yield
\begin{eqnarray}\begin{split}\label{GrindEQ__3_48_}
I_{k_2}\lesssim (\left\| u\right\| _{L^{q}}^{\sigma} +\left\| v\right\| _{L^{q}}^{\sigma})\left\| u-v\right\| _{\dot{H}_{r}^{s} }
+(\left\| u\right\| _{L^{q}}^{\sigma -1} +\left\| v\right\| _{L^{q}}^{\sigma -1})\left\| u\right\| _{\dot{H}_{r}^{s} }\left\| u-v\right\| _{L^{q}}.
\end{split}\end{eqnarray}
In view of \eqref{GrindEQ__3_29_}, \eqref{GrindEQ__3_36_} and \eqref{GrindEQ__3_48_}, we immediately get
 \begin{eqnarray}\begin{split}\label{GrindEQ__3_49_}
\left\| I_{k} \right\| _{\dot{H}_{p}^{v} }\lesssim (\left\| u\right\| _{L^{q}}^{\sigma} +\left\| v\right\| _{L^{q}}^{\sigma})\left\| u-v\right\| _{\dot{H}_{r}^{s} }
+(\left\| u\right\| _{L^{q}}^{\sigma -1} +\left\| v\right\| _{L^{q}}^{\sigma -1})\left\| u\right\| _{\dot{H}_{r}^{s} }\left\| u-v\right\| _{L^{q}},
\end{split}\end{eqnarray}
for any $k<[s]$.

$\cdot$ If $k=[s]$, then it follows from \eqref{GrindEQ__3_11_} and H\"{o}lder inequality that
\begin{eqnarray}\begin{split} \label{GrindEQ__3_50_}
\left\| f^{(k+1)} (u)-f^{(k+1)} (v)\right\| _{L^{p_{4_k}}}&\lesssim\left\| \left|u-v\right|^{\min \{ \sigma -\left\lceil s\right\rceil +1,\;1\} } \left(|u|+|v|\right)^{\max \{ 0,\;\sigma -\left\lceil s\right\rceil \} } \right\| _{L^{p_{4_k}}}  \\
&\lesssim\left\| u-v\right\| _{L^q}^{\min \{ \sigma -\left\lceil s\right\rceil +1,\;1\} } (\left\| u\right\|_{L^q}+\left\|v\right\|_{L^q})^{\max \{ 0,\;\sigma -\left\lceil s\right\rceil \} }.
\end{split}\end{eqnarray}
If $\sigma\ge \left\lceil s\right\rceil$, we can repeat the same argument as in the case $k<[s]$ to obtain \eqref{GrindEQ__3_49_}. Hence, it suffices to consider the case $\left \lceil s\right\rceil-1\le \sigma<\left\lceil s\right\rceil$.
Notice that
\begin{eqnarray}\begin{split} \label{GrindEQ__3_51_}
(|z_1|^{c-1}+|z_2|^{c-1})|z_1-z_2|\lesssim |z_1-z_2|^c,~\textrm{for}~0<c<1~\textrm{and}~z_1,~z_2\in \mathbb C.
\end{split}\end{eqnarray}
In fact, with out loss of generality, we assume $|z_1|\ge|z_2|>0$. If $|z_1-z_2|\le |z_1|$, then (since $c<1$)
$$(|z_1|^{c-1}+|z_2|^{c-1})|z_1-z_2|\le 2|z_1|^{c-1}|z_1-z_2|\le 2|z_1-z_2|^{c}.$$
If $|z_1-z_2|\ge |z_1|$, then (since $|z_1-z_2|\le |z_1|+|z_2|\le 2|z_1|$) we see that
$$(|z_1|^{c-1}+|z_2|^{c-1})|z_1-z_2|\le 2|z_1|^{c-1}|z_1-z_2|\le 4|z_1|^{c}\le 4|z_1-z_2|^{c}.$$
It follows from \eqref{GrindEQ__3_10_}, \eqref{GrindEQ__3_18_}, \eqref{GrindEQ__3_51_} and H\"{o}lder inequality that
\begin{eqnarray}\begin{split} \label{GrindEQ__3_52_}
\left\| f^{([s])} (u)-f^{([s])} (v)\right\| _{L^{p_{1_k}}}&=\left\| \left(u-v\right)\int _{0}^{1}f^{(\lceil s\rceil)} \left(v+t\left(u-v\right)\right)dt \right\| _{L^{p_{1_k}}}\\
&\lesssim \left\| \left(u-v\right)\int _{0}^{1}\left|v+t\left(u-v\right)\right|^{\sigma -[s]} dt \right\| _{L^{p_{1_k}}}\\
&\lesssim \left\| |u-v|(|u|^{\sigma -[s]}+|v|^{\sigma -[s]})\right\| _{L^{p_{1_k}}}\\
&\lesssim \left\| |u-v|(|u|^{\sigma -[s]-1}+|v|^{\sigma -[s]-1})(|u|+|v|)\right\| _{L^{p_{1_k}}}\\
&\lesssim \left\| |u-v|^{\sigma -[s]}(|u|+|v|)\right\| _{L^{p_{1_k}}}\\
&\lesssim \left\|u-v\right\|_{L^q}^{\sigma -[s]}(\left\|u\right\|_{L^q}+\left\|u\right\|_{L^q})
\end{split}\end{eqnarray}
In view of \eqref{GrindEQ__3_35_}, \eqref{GrindEQ__3_37_}--\eqref{GrindEQ__3_42_}, \eqref{GrindEQ__3_50_} and \eqref{GrindEQ__3_52_}, we have
\begin{eqnarray}\begin{split}\label{GrindEQ__3_53_}
I_{k_2}&=\left\| f^{(k)} (u)-f^{(k)} (v)\right\| _{\dot{H}_{p_{2_k} }^{v} } \left\| \prod _{i=1}^{k}D^{\alpha _{i} } u \right\| _{b_{k} }\\
&\lesssim \left\| u\right\| _{\dot{H}_{r}^{s} }^{\frac{[s]}{s}}\left\|u\right\|_{L^{q}}^{[s]-\frac{[s]}{s}}
\left\| f^{(k)} (u)-f^{(k)} (v)\right\| _{L^{p_{1_k}}}^{1-v}
(C+B)^{v}\\
&\lesssim \left\| u\right\| _{\dot{H}_{r}^{s} }^{\frac{[s]}{s}}\left\|u\right\|_{L^{q}}^{[s]-\frac{[s]}{s}}
\left[\left\|u-v\right\|_{L^q}^{\sigma -[s]-1}(\left\|u\right\|_{L^q}+\left\|u\right\|_{L^q})\right]^{1-v}
C^v\\
&~~+\left\| u\right\| _{\dot{H}_{r}^{s} }^{\frac{[s]}{s}}\left\|u\right\|_{L^{q}}^{[s]-\frac{[s]}{s}}
\left[\left\| u-v\right\| _{L^{q} } (\left\| u\right\| _{L^{q}}^{\sigma -[s]} +\left\| v\right\| _{L^{q}}^{\sigma -[s]})\right]^{1-v}
B^v,
\end{split}\end{eqnarray}
where $B$ is given in \eqref{GrindEQ__3_45_} and
\begin{eqnarray}\begin{split}\label{GrindEQ__3_54_}
C:=\left\| u-v\right\| _{L^q}^{\sigma -[s]}\left\|u\right\| _{\dot{H}_{r}^{s}}^{\frac{1}{s}}\left\|u\right\| _{L^{q}}^{1-\frac{1}{s}}.
\end{split}\end{eqnarray}
An easy computation shows that
\begin{eqnarray}\begin{split}\label{GrindEQ__3_55_}
\left\| u\right\| _{\dot{H}_{r}^{s} }^{\frac{[s]}{s}}\left\|u\right\|_{L^{q}}^{[s]-\frac{[s]}{s}}
\left[\left\|u-v\right\|_{L^q}^{\sigma -[s]-1}(\left\|u\right\|_{L^q}+\left\|u\right\|_{L^q})\right]^{1-v}
C^v\\
\lesssim \left\|u\right\|_{\dot{H}_{r}^{s}}(\left\|u\right\|_{L^q}^{[s]}+\left\|u\right\|_{L^q}^{[s]})\left\|u-v\right\|_{L^q}^{\sigma-[s]}.
\end{split}\end{eqnarray}
In view of \eqref{GrindEQ__3_47_}, \eqref{GrindEQ__3_53_} and \eqref{GrindEQ__3_55_}, we have
\begin{eqnarray}\begin{split}\label{GrindEQ__3_56_}
I_{k_2}&\lesssim (\left\| u\right\| _{L^{q}}^{\sigma} +\left\| v\right\| _{L^{q}}^{\sigma})\left\| u-v\right\| _{\dot{H}_{r}^{s} }
+(\left\| u\right\| _{L^{q}}^{\sigma -1} +\left\| v\right\| _{L^{q}}^{\sigma -1})\left\| u\right\| _{\dot{H}_{r}^{s} }\left\| u-v\right\| _{L^{q}}\\
&~~+\left\|u\right\|_{\dot{H}_{r}^{s}}(\left\|u\right\|_{L^q}^{[s]}+\left\|u\right\|_{L^q}^{[s]})\left\|u-v\right\|_{L^q}^{\sigma-[s]},
\end{split}\end{eqnarray}
for $k=[s]$ and $\lceil s\rceil-1 \le \sigma<\lceil s\rceil$.
Collecting the above three cases ($k<[s]$; $k=[s]$ and $\sigma\ge \lceil s\rceil$; $k=[s]$ and $\lceil s\rceil-1 \le \sigma<\lceil s\rceil$),
we have
\begin{eqnarray}\begin{split}\label{GrindEQ__3_57_}
I_{k_2}&\lesssim(\left\| u\right\| _{L^q}^{\sigma } +\left\| v\right\| _{L^q}^{\sigma })\left\| u-v\right\| _{\dot{H}_{r}^{s} }
+(\left\| u\right\| _{L^q}^{\sigma-1} +\left\| v\right\| _{L^q}^{\sigma-1})
\left\| u\right\| _{\dot{H}_{r}^{s}}\left\| u-v\right\| _{L^{q}}\\
&~~+(\left\| u\right\| _{L^q}^{\max \{ \left\lceil s\right\rceil-1 ,\;\sigma-1\} } +\left\| v\right\| _{L^q}^{\max \{ \left\lceil s\right\rceil-1,\;\sigma-1 \} } )\left\|u\right\| _{\dot{H}_{r}^{s}}\left\| u-v\right\| _{L^q}^{\min \{ \sigma -\left\lceil s\right\rceil +1,\;1\} },
\end{split}\end{eqnarray}
for any $1\le k\le [s]$.
\eqref{GrindEQ__3_36_} and \eqref{GrindEQ__3_57_} imply that
\begin{eqnarray}\begin{split}\label{GrindEQ__3_58_}
I_{k}&\lesssim(\left\| u\right\| _{L^q}^{\sigma } +\left\| v\right\| _{L^q}^{\sigma })\left\| u-v\right\| _{\dot{H}_{r}^{s} }
+(\left\| u\right\| _{L^q}^{\sigma-1} +\left\| v\right\| _{L^q}^{\sigma-1})
\left\| u\right\| _{\dot{H}_{r}^{s}}\left\| u-v\right\| _{L^{q}}\\
&~~+(\left\| u\right\| _{L^q}^{\max \{ \left\lceil s\right\rceil-1 ,\;\sigma-1\} } +\left\| v\right\| _{L^q}^{\max \{ \left\lceil s\right\rceil-1,\;\sigma-1 \} } )\left\|u\right\| _{\dot{H}_{r}^{s}}\left\| u-v\right\| _{L^q}^{\min \{ \sigma -\left\lceil s\right\rceil +1,\;1\} },
\end{split}\end{eqnarray}
for any $1\le k\le [s]$.

\emph{Step 2.2.} Next, we estimate $\left\| II_{k} \right\| _{\dot{H}_{p}^{v} } $, where $II_{k}$ is given in \eqref{GrindEQ__3_16_}.
Lemma \ref{lem 2.5.} (fractional product rule) yields that
\begin{eqnarray}\begin{split} \label{GrindEQ__3_59_}
\left\| II_{k} \right\| _{\dot{H}_{p}^{v} } &\lesssim\left\| f^{(k)} (v)\right\| _{p_{1_k} } \left\| \prod _{i=1}^{k}D^{\alpha _{i} } u -\prod _{i=1}^{k}D^{\alpha _{i} } v\right\| _{\dot{H}_{a_{k} }^{v} } \\
&~~~~~~~~+\left\| f^{(k)} (v)\right\| _{\dot{H}_{p_{2_k} }^{v} } \left\| \prod _{i=1}^{k}D^{\alpha _{i} } u-\prod _{i=1}^{k}D^{\alpha _{i} } v \right\| _{b_{k} }\\
&\equiv II_{k_1} +II_{k_2} ,
\end{split}\end{eqnarray}
where $p_{1_k}$ is given in \eqref{GrindEQ__3_18_} and $a_{k}$, $p_{2_k} $, $b_{k}$ are given in \eqref{GrindEQ__3_30_}.

First, we estimate $II_{k_1}$. If $k=1$, it follows from \eqref{GrindEQ__3_10_} that
\begin{eqnarray}\begin{split} \label{GrindEQ__3_60_}
II_{k_1}\lesssim \left\| f'(v)\right\| _{p_{1_k} }\left\| D^{\alpha _{1}} u-D^{\alpha _{1}} v\right\| _{\dot{H}_{a_{k}}^{v} } \lesssim \left\|v\right\|_{L^q}^{\sigma}\left\| u-v\right\| _{\dot{H}_{r}^{s} } .
\end{split}\end{eqnarray}
We consider the case $k>1$. In view of \eqref{GrindEQ__3_25_}, we have
\begin{eqnarray}\begin{split} \label{GrindEQ__3_61_}
&II_{k_1}\lesssim \left\| f^{(k)} (v)\right\| _{L^{p_{1_k}}}\sum_{i=1}^{k}{\left\| \prod _{j=1}^{i-1}D^{\alpha _{j} } u \prod _{j=i+1}^{k}D^{\alpha _{j} } v \left(D^{\alpha _{i} } u-D^{\alpha _{i} } v\right)\right\| _{\dot{H}_{a_k}^{v}}}.
\end{split}\end{eqnarray}
Combining the argument in the proof of \eqref{GrindEQ__3_27_} with that in the proof of \eqref{GrindEQ__3_33_}, we have
\begin{eqnarray}\begin{split} \label{GrindEQ__3_62_}
&\left\| f^{(k)} (v)\right\| _{L^{p_{1_k}}}\left\| \prod _{j=1}^{i-1}D^{\alpha _{j} } u \prod _{j=i+1}^{k}D^{\alpha _{j} } v \left(D^{\alpha _{i} } u-D^{\alpha _{i} } v\right)\right\| _{\dot{H}_{a_k}^{v}}\\
&\lesssim \left\|v\right\|_{L^q}^{\sigma+1-k}\left\|D^{\alpha _{i} } u-D^{\alpha _{i} } v\right\| _{\dot{H}_{\tilde{\alpha}_{k_i}}^{v}}
\left\|\prod _{j=1}^{i-1}D^{\alpha _{j} } u \prod _{j=i+1}^{k}D^{\alpha _{j} } v\right\| _{L^{\tilde{b}_{k_i}}}\\
&~~+\left\|v\right\|_{L^q}^{\sigma+1-k}\left\|D^{\alpha _{i} } u-D^{\alpha _{i} } v\right\| _{L^{\alpha_{k_{i}}}}
\left\|\prod _{j=1}^{i-1}D^{\alpha _{j} } u \prod _{j=i+1}^{k}D^{\alpha _{j} } v\right\| _{\dot{H}_{b_{k_i}}^v}\\
&\lesssim\left[(\left\|u\right\| _{\dot{H}_{r}^{s}}+\left\|v\right\| _{\dot{H}_{r}^{s}})\left\|u-v\right\|_{L^q}\right]^{1-\frac{|\tilde{\alpha}_{i}|}{s}}
\left[\left\|u-v\right\| _{\dot{H}_{r}^{s}}(\left\|u\right\|_{L^q}+\left\|v\right\|_{L^q})\right]^{\frac{|\tilde{\alpha}_{i}|}{s}}
(\left\|u\right\|_{L^q}+\left\|v\right\|_{L^q})^{\sigma-1}\\
&~~+ \left[(\left\|u\right\| _{\dot{H}_{r}^{s}}+\left\|v\right\| _{\dot{H}_{r}^{s}})\left\|u-v\right\|_{L^q}\right]^{1-\frac{|\alpha_{i}|}{s}}
\left[\left\|u-v\right\| _{\dot{H}_{r}^{s}}(\left\|u\right\|_{L^q}+\left\|v\right\|_{L^q})\right]^{\frac{|\alpha_{i}|}{s}}
(\left\|u\right\|_{L^q}+\left\|v\right\|_{L^q})^{\sigma-1}\\
&\lesssim(\left\| u\right\| _{L^q}^{\sigma } +\left\| v\right\| _{L^q}^{\sigma })\left\| u-v\right\| _{\dot{H}_{r}^{s}}+
(\left\| u\right\| _{L^q}^{\sigma-1} +\left\| v\right\| _{L^q}^{\sigma-1})
(\left\| u\right\| _{\dot{H}_{r}^{s}}+\left\| v\right\| _{\dot{H}_{r}^{s}})\left\| u-v\right\| _{L^{q}},
\end{split}\end{eqnarray}
where
$$
\frac{1}{\tilde{b}_{k_i}}:=\frac{1}{a_k}-\frac{1}{\tilde{\alpha}_{k_i}},~\frac{1}{b_{k_i}}:=\frac{1}{a_k}-\frac{1}{\alpha_{k_i}},
$$
and $\alpha_{k_i}$, $\tilde{\alpha}_{k_i}$ are given in \eqref{GrindEQ__3_18_} and \eqref{GrindEQ__3_31_} respectively.
In view of \eqref{GrindEQ__3_60_}--\eqref{GrindEQ__3_62_}, we have
\begin{eqnarray}\begin{split} \label{GrindEQ__3_63_}
I_{k_1}\lesssim(\left\| u\right\| _{L^q}^{\sigma } +\left\| v\right\| _{L^q}^{\sigma })\left\| u-v\right\| _{\dot{H}_{r}^{s}}+
(\left\| u\right\| _{L^q}^{\sigma-1} +\left\| v\right\| _{L^q}^{\sigma-1})
(\left\| u\right\| _{\dot{H}_{r}^{s}}+\left\| v\right\| _{\dot{H}_{r}^{s}})\left\| u-v\right\| _{L^{q}},
\end{split}\end{eqnarray}
for any $1\le k\le [s]$.

Next, we estimate $II_{k_2}$. If follows from \eqref{GrindEQ__3_30_}, Lemma \ref{lem 2.4.} (fractional chain rule) and Lemma \ref{lem 2.8.} that
\begin{eqnarray}\begin{split} \label{GrindEQ__3_64_}
\left\| f^{(k)}(v)\right\| _{\dot{H}_{p_{2_k} }^{v} }&\lesssim \left\|f^{(k+1)}(v)\right\|_{L^{p_{6_k}}}\left\|v\right\|_{\dot{H}_r^s}
\lesssim \left\|v\right\|_{L^q}^{\sigma-k}\left\|v\right\|_{\dot{H}_r^s}^{\frac{v}{s}}\left\|v\right\|_{L^q}^{\frac{[s]}{s}},
\end{split}\end{eqnarray}
where $\frac{1}{p_{6_k}}:=\frac{\sigma-k}{q}$ and $\frac{1}{p_{7_k}}:=\frac{v}{rs}+\frac{[s]}{qs}$.
If $k=1$, if follows from \eqref{GrindEQ__3_30_}, \eqref{GrindEQ__3_64_} and Lemma \ref{lem 2.8.} that
\begin{eqnarray}\begin{split} \label{GrindEQ__3_65_}
II_{k_2}&= \left\| f'(v)\right\| _{\dot{H}_{p_{2_k} }^{v} } \left\|D^{\alpha _{1} } u-D^{\alpha _{1} } v \right\| _{b_{k}}
\lesssim \left\|v\right\|_{L^q}^{\sigma-k}\left\|v\right\|_{\dot{H}_r^s}^{\frac{v}{s}}\left\|v\right\|_{L^q}^{\frac{[s]}{s}}
\left\|u-v\right\|_{\dot{H}_r^s}^{\frac{[s]}{s}}\left\|u-v\right\|_{L^q}^{\frac{v}{s}}\\
&\lesssim \left\|v\right\|_{L^q}^{\sigma-1}(\left\|v\right\|_{\dot{H}_r^s}\left\|u-v\right\|_{L^q}^{\frac{v}{s}}+\left\|v\right\|_{L^q}
\left\|u-v\right\|_{\dot{H}_r^s}).
\end{split}\end{eqnarray}
Next, we consider the case $k\ge 2$. In view of \eqref{GrindEQ__3_25_}, we have
\begin{eqnarray}\begin{split} \label{GrindEQ__3_66_}
&II_{k_2}\lesssim \left\| f^{(k)}(v)\right\| _{\dot{H}_{p_{2_k} }^{v} }\sum_{i=1}^{k}{\left\| \prod _{j=1}^{i-1}D^{\alpha _{j} } u \prod _{j=i+1}^{k}D^{\alpha _{j} } v \left(D^{\alpha _{i} } u-D^{\alpha _{i} } v\right)\right\| _{L^{b_k}}}.
\end{split}\end{eqnarray}
Using \eqref{GrindEQ__3_18_}, \eqref{GrindEQ__3_30_}, \eqref{GrindEQ__3_64_}, and Lemma \ref{lem 2.8.}, we have
\begin{eqnarray}\begin{split} \label{GrindEQ__3_67_}
\left\| f^{(k)}(v)\right\| _{\dot{H}_{p_{2_k} }^{v} }&\left\| \prod _{j=1}^{i-1}D^{\alpha _{j} } u \prod _{j=i+1}^{k}D^{\alpha _{j} } v \left(D^{\alpha _{i} } u-D^{\alpha _{i} } v\right)\right\| _{L^{b_k}}\\
&\lesssim \left\|v\right\|_{L^q}^{\sigma-k}\left\|v\right\|_{\dot{H}_r^s}^{\frac{v}{s}}\left\|v\right\|_{L^q}^{\frac{[s]}{s}} \left\| u-v\right\| _{\dot{H}_{\alpha_{k_i} }^{|\alpha _{i}|} } \prod _{j=1}^{i-1}\left\| u\right\| _{\dot{H}_{\alpha_{k_j} }^{|\alpha _{j}|} }  \prod _{j=i+1}^{k}\left\| v\right\| _{\dot{H}_{\alpha_{k_j} }^{|\alpha _{j}|} }\\
&\lesssim \left\|v\right\|_{L^q}^{\sigma-k}\left\|v\right\|_{\dot{H}_r^s}^{\frac{v}{s}}\left\|v\right\|_{L^q}^{\frac{[s]}{s}} \left\|u-v\right\| _{\dot{H}_{r}^{s}}^{\frac{|\alpha_{i}|}{s}}\left\|u-v\right\| _{L^{q}}^{1-\frac{|\alpha_{i}|}{s}}\\
&~~~~~~~~~\cdot\prod_{j=1}^{i-1} (\left\|u\right\| _{\dot{H}_{r}^{s}}^{\frac{|\alpha_{j}|}{s}}\left\|u\right\| _{L^{q}}^{1-\frac{|\alpha_{j}|}{s}})
\prod_{j=i+1}^{k} (\left\|v\right\| _{\dot{H}_{r}^{s}}^{\frac{|\alpha_{j}|}{s}}\left\|v\right\| _{L^{q}}^{1-\frac{|\alpha_{j}|}{s}})\\
&\lesssim (\left\|u\right\|_{L^q}+\left\|v\right\|_{L^q})^{\sigma-1}
\left[(\left\|u\right\| _{\dot{H}_{r}^{s}}+\left\|v\right\| _{\dot{H}_{r}^{s}})\left\|u-v\right\|_{L^q}\right]^{1-\frac{|\alpha_{i}|}{s}}\\
&~~~~~~~~~\cdot\left[\left\|u-v\right\| _{\dot{H}_{r}^{s}}(\left\|u\right\|_{L^q}+\left\|v\right\|_{L^q})\right]^{\frac{|\alpha_{i}|}{s}}\\
&\lesssim (\left\| u\right\| _{L^q}^{\sigma-1} +\left\| v\right\| _{L^q}^{\sigma-1})
(\left\| u\right\| _{\dot{H}_{r}^{s}}+\left\| v\right\| _{\dot{H}_{r}^{s}})\left\| u-v\right\| _{L^{q}}\\
&~~~~~~~~~+(\left\| u\right\| _{L^q}^{\sigma } +\left\| v\right\| _{L^q}^{\sigma })\left\| u-v\right\| _{\dot{H}_{r}^{s}},
\end{split}\end{eqnarray}
In view of \eqref{GrindEQ__3_65_}--\eqref{GrindEQ__3_67_}, we have
\begin{eqnarray}\begin{split} \label{GrindEQ__3_68_}
II_{k_2}\lesssim(\left\| u\right\| _{L^q}^{\sigma } +\left\| v\right\| _{L^q}^{\sigma })\left\| u-v\right\| _{\dot{H}_{r}^{s}}+
(\left\| u\right\| _{L^q}^{\sigma-1} +\left\| v\right\| _{L^q}^{\sigma-1})
(\left\| u\right\| _{\dot{H}_{r}^{s}}+\left\| v\right\| _{\dot{H}_{r}^{s}})\left\| u-v\right\| _{L^{q}},
\end{split}\end{eqnarray}
for any $1\le k\le [s]$. \eqref{GrindEQ__3_63_} and \eqref{GrindEQ__3_68_} imply that
\begin{eqnarray}\begin{split} \label{GrindEQ__3_69_}
\left\| II_{k} \right\| _{\dot{H}_{p}^{v} }&\lesssim II_{k_1}+II_{k_2}\\
&\lesssim(\left\| u\right\| _{L^q}^{\sigma } +\left\| v\right\| _{L^q}^{\sigma })\left\| u-v\right\| _{\dot{H}_{r}^{s}}+
(\left\| u\right\| _{L^q}^{\sigma-1} +\left\| v\right\| _{L^q}^{\sigma-1})
(\left\| u\right\| _{\dot{H}_{r}^{s}}+\left\| v\right\| _{\dot{H}_{r}^{s}})\left\| u-v\right\| _{L^{q}},
\end{split}\end{eqnarray}
for any $1\le k\le [s]$. This completes the proof in the case $s\notin \mathbb N$.
\end{proof}

\begin{remark}\label{rem 3.3.}
\textnormal{Let $f(z)$ be a polynomial in $z$ and $\bar{z}$ satisfying $\deg(f)=\sigma $.
Using the same argument as in the proof of Lemma \ref{lem 3.2.}, we can easily get
\begin{eqnarray}\begin{split}\nonumber
\left\| f(u)-f(v)\right\| _{\dot{H}_{p}^{s} } \lesssim(\left\| u\right\| _{L^q}^{\sigma } +\left\| v\right\| _{L^q}^{\sigma })\left\| u-v\right\| _{\dot{H}_{r}^{s}}
+(\left\| u\right\| _{L^q}^{\sigma-1} +\left\| v\right\| _{L^q}^{\sigma-1})
(\left\| u\right\| _{\dot{H}_{r}^{s}}+\left\| v\right\| _{\dot{H}_{r}^{s}})\left\| u-v\right\| _{L^{q}},
\end{split}\end{eqnarray}
for $s\ge 1$ and $1<p,r<\infty$, $1<q\le \infty$ satisfying $\frac{1}{p}=\frac{1}{r}+\frac{\sigma}{q}$.}
\end{remark}

\begin{remark}\label{rem 3.4.}
\textnormal{Lemma \ref{lem 3.2.} generalizes Corollary 3.5 of \cite{D18I}, which only covers the case $\sigma\ge \lceil s\rceil$. Lemma \ref{lem 3.2.} also improves Lemmas 3.2 and 3.9 of \cite{AKC22}.
In fact, putting $\frac{1}{q}:=\frac{1}{r}-\frac{s}{d}$, Lemma 3.2 of \cite{AKC22} follows directly from Lemma \ref{lem 3.2.} by using the embedding $\dot{H}_r^s\hookrightarrow L^q$.
Putting $q:=\infty$, Lemma 3.9 of \cite{AKC22} also follows from Lemma \ref{lem 3.2.} by using the embedding $H_r^s\hookrightarrow L^{\infty}$.
Similarly, Lemma \ref{lem 3.1.} generalizes Lemmas 3.1 and 3.8 of \cite{AKC22}}
\end{remark}

We also have the following result which generalizes Lemmas 3.4 and 3.11 of \cite{AKC22}.
\begin{lemma}\label{lem 3.5.}
Let $\sigma >0$ and $1\le p,q, r\le \infty$ with $\frac{1}{p}=\frac{1}{r}+\frac{\sigma}{q}$.
Assume that $f\in C^{1} \left(\mathbb C\to \mathbb C\right)$ satisfies
\begin{equation} \label{GrindEQ__3_70_}
\left|f'(u)\right|\lesssim|u|^{\sigma } ,
\end{equation}
for any $u\in \mathbb C$. Then we have
\[\left\| f(u)-f(v)\right\| _{L^p} \lesssim\left(\left\| u\right\| _{L^q}^{\sigma } +\left\| v\right\| _{L^q}^{\sigma } \right)\left\| u-v\right\| _{L^r}.\]
\end{lemma}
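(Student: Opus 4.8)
The plan is to reduce the estimate to a single pointwise bound followed by one application of H\"older's inequality, since no fractional smoothness is involved in this $L^p$ statement. First I would represent the difference by the fundamental theorem of calculus,
\[
f(u)-f(v)=(u-v)\int_{0}^{1}f'\bigl(v+t(u-v)\bigr)\,dt,
\]
treating $f$ as a function of a real variable exactly as in the proof of Lemma \ref{lem 3.1.}, so that the chain rule applies without fuss.

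Next I would bound the integral factor pointwise. Using the growth hypothesis \eqref{GrindEQ__3_70_} together with the elementary inequality $|v+t(u-v)|\le |u|+|v|$ valid for all $t\in[0,1]$, one obtains
\[
\Bigl|\int_{0}^{1}f'\bigl(v+t(u-v)\bigr)\,dt\Bigr|\lesssim \max_{t\in[0,1]}|v+t(u-v)|^{\sigma}\lesssim |u|^{\sigma}+|v|^{\sigma}.
\]
This is the only place where the structural assumption on $f$ enters, and it reduces the problem to estimating a product of two explicit factors.

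The final step is H\"older's inequality. Rewriting the exponent relation as $\tfrac{1}{p}=\tfrac{1}{r}+\tfrac{1}{q/\sigma}$, I would apply H\"older with the pair $(r,\,q/\sigma)$ to get
\[
\|f(u)-f(v)\|_{L^{p}}\le \|u-v\|_{L^{r}}\,\bigl\||u|^{\sigma}+|v|^{\sigma}\bigr\|_{L^{q/\sigma}}\lesssim \|u-v\|_{L^{r}}\bigl(\|u\|_{L^{q}}^{\sigma}+\|v\|_{L^{q}}^{\sigma}\bigr),
\]
which is precisely the claimed bound. I expect no genuine obstacle here; the only points requiring attention are the bookkeeping that confirms $q/\sigma$ is the correct exponent so that $\tfrac{1}{r}+\tfrac{\sigma}{q}=\tfrac{1}{p}$, and the observation that the endpoint values $p,q,r\in\{1,\infty\}$ are permitted since the argument uses only H\"older's inequality and invokes no Sobolev embedding.
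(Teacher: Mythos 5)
Your proposal is correct and is essentially the paper's own proof: both represent $f(u)-f(v)$ by the fundamental theorem of calculus, deduce the pointwise bound $|f(u)-f(v)|\lesssim(|u|^{\sigma}+|v|^{\sigma})|u-v|$ from \eqref{GrindEQ__3_70_}, and conclude with a single application of H\"older's inequality using $\frac{1}{p}=\frac{1}{r}+\frac{\sigma}{q}$. The only cosmetic difference is that the paper writes the integral representation with the Wirtinger derivatives $f_{z}$, $f_{\bar z}$ rather than reducing to a real variable, which changes nothing in the estimate.
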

\begin{proof}
Since
\[f(u)-f(v)=\int _{0}^{1}\left[f_{z} \left(v+t\left(u-v\right)\right)\left(u-v\right)+f_{\bar{z}} \left(v+t\left(u-v\right)\right)\left(\bar{u}-\bar{v}\right)\right]dt ,\]
it follows from \eqref{GrindEQ__3_70_} that
\[\left|f(u)-f(v)\right|\lesssim\left(|u|^{\sigma } +|v|^{\sigma } \right)\left|u-v\right|.\]
Hence we have
\begin{eqnarray}\begin{split}\nonumber
\left\| f(u)-f(v)\right\| _{L^p}&\lesssim\left\| \left(|u|^{\sigma } +|v|^{\sigma } \right)\left(u-v\right)\right\| _{L^p} \le \left(\left\| u\right\| _{L^q}^{\sigma } +\left\| v\right\| _{L^q}^{\sigma } \right)\left\| u-v\right\| _{L^r},
\end{split}\end{eqnarray}
this completes the proof.
\end{proof}

\begin{remark}\label{rem 3.5.}
\textnormal{Let $s>0$ and $\sigma>0$. If $\sigma$ is not an even integer, assume that $\sigma\ge \lceil s\rceil-1$. If $s<1$, in addition, suppose further that $\sigma\ge 1$. Then we can see that $f(u)=\lambda |u|^{\sigma}u$ with $\lambda\in \mathbb C$ satisfies the assumptions of Lemmas \ref{lem 3.1.}--\ref{lem 3.5.}. See \cite{CFH11,DYC13} for example.}
\end{remark}
\section{Proof of Theorem \ref{thm 1.2.}}\label{sec 4.}

In this section, we prove Theorem \ref{thm 1.2.}.
First, we recall the following useful fact.
\begin{remark}[\cite{AK212}]\label{rem 4.1.}
\textnormal{Let $1<r<\infty$, $s\ge0$ and $b>0$. Let $\chi\in C_{0}^{\infty}(\mathbb R^{d})$ satisfy $\chi(x)=1$ for $|x|\le 1$ and $\chi(x)=0$ for $|x|\ge 2$.
If $b+s<\frac{d}{r}$, then $\chi(x)|x|^{-b} \in \dot{H}_{r}^{s}$.
If $b+s>\frac{d}{r}$, then $(1-\chi(x))|x|^{-b}\in \dot{H}_{r}^{s}$.}
\end{remark}

Using Remark \ref{rem 4.1.} and the estimates of $f(u)-f(v)$ established in Section \ref{sec 3.}, we have the following estimates of the term $|x|^{-b}|u|^{\sigma}u-|x|^{-b}|v|^{\sigma}v$. Such estimates play the crucial role in proving Theorem \ref{thm 1.2.}.
\begin{lemma}\label{lem 4.2.}
$d\in \mathbb N$, $0< s <\min \{2+\frac{d}{2},\frac{3}{2}d\}$, $0<b<\min\{4,d,\frac{3}{2}d-s,\frac{d}{2}+2-s\}$  and $0<\sigma<\sigma_{c}(s)$. If $\sigma$ is not an even integer, assume \eqref{GrindEQ__1_10_}. Then there exist $(a_{i},b_{i})\in A$ $(i=1,2)$, $(p_{j},q_{j})\in B_{0}$ $(j=1,2,3,4)$ and $\theta_k>0$ $(k=1,2,3)$ such that that following assertions hold.

\textnormal{(1)} If $\sigma$ is an even integer, or if not we assume further that $\sigma \ge \hat{\sigma}_{\star}(s)$, then we have
\begin{eqnarray}\begin{split} \label{GrindEQ__4_1_}
&\left\|\chi(x)|x|^{-b}(|u|^{\sigma}u-|v|^{\sigma}v)\right\|_{L^{a_{1}'}(I,\dot{H}_{b_{1}'}^{s-\gamma_{a_{1}',b_{1}'}-4})}\\
&~~~~~~~~\lesssim |I|^{\theta_1}(\left\|u\right\|_{L^{p_{1}}(I,H_{q_{1}}^{s})}^{\sigma}+\left\|v\right\|_{L^{p_{1}}(I,H_{q_{1}}^{s})}^{\sigma})
    \left\|u-v\right\|_{L^{p_{1}}(I,H_{q_{1}}^{s})},
\end{split}\end{eqnarray}
\begin{eqnarray}\begin{split} \label{GrindEQ__4_2_}
&\left\|(1-\chi(x))|x|^{-b}(|u|^{\sigma}u-|v|^{\sigma}v)\right\|_{L^{a_{2}'}(I,\dot{H}_{b_{2}'}^{s-\gamma_{a_{2}',b_{2}'}-4})}\\
&~~~~~~~\lesssim (|I|^{\theta_2}+|I|^{\theta_3})\max_{j\in\{2,3,4\}}(\left\|u\right\|_{L^{p_{j}}(I,H_{q_{j}}^{s})}^{\sigma}+\left\|v\right\|_{L^{p_{j}}(I,H_{q_{j}}^{s})}^{\sigma})
\max_{j\in\{2,3,4\}}
      \left\|u-v\right\|_{L^{p_{j}}(I,H_{q_{j}}^{s})},
\end{split}\end{eqnarray}
where $I(\subset \mathbb R)$ is an interval and $\chi\in C_{0}^{\infty}(\mathbb R^{d})$ is given in Remark \ref{rem 4.1.}.

\textnormal{(2)} If $\sigma$ is not an even integer satisfying $\hat{\sigma}_{\star}(s)>\sigma \ge \tilde{\sigma}_{\star}(s)$, then we have
\begin{eqnarray}\begin{split} \label{GrindEQ__4_3_}
&\left\|\chi(x)|x|^{-b}(|u|^{\sigma}u-|v|^{\sigma}v)\right\|_{L^{a_{1}'}(I,\dot{H}_{b_{1}'}^{s-\gamma_{a_{1}',b_{1}'}-4})}\\
&~~~~~~~~\lesssim |I|^{\theta_1}(\left\|u\right\|_{L^{p_{1}}(I,H_{q_{1}}^{s})}^{\sigma}+\left\|v\right\|_{L^{p_{1}}(I,H_{q_{1}}^{s})}^{\sigma})
    \left\|u-v\right\|_{L^{p_{1}}(I,H_{q_{1}}^{s})}\\
&~~~~~~~~~~~~~~~~~+|I|^{\tilde{\theta}_1}(\left\|u\right\|_{L^{p_{1}}(I,H_{q_{1}}^{s})}^{\hat{\sigma}_{\star}(s)} +\left\|v\right\|_{L^{p_{1}}(I,H_{q_{1}}^{s})}^{\hat{\sigma}_{\star}(s)}) \left\|u-v\right\|_{L^{\tilde{p}_{1}}(I,H_{\tilde{q}_{1}}^{s-\varepsilon_1})}^{\sigma-\hat{\sigma}_{\star}(s)+1},
\end{split}\end{eqnarray}
\begin{eqnarray}\begin{split} \label{GrindEQ__4_4_}
&\left\|(1-\chi(x))|x|^{-b}(|u|^{\sigma}u-|v|^{\sigma}v)\right\|_{L^{a_{2}'}(I,\dot{H}_{b_{2}'}^{s-\gamma_{a_{2}',b_{2}'}-4})}\\
&~~~~~~~\lesssim (|I|^{\theta_2}+|I|^{\theta_3})\max_{j\in\{2,3,4\}}(\left\|u\right\|_{L^{p_{j}}(I,H_{q_{j}}^{s})}^{\sigma}+\left\|v\right\|_{L^{p_{j}}(I,H_{q_{j}}^{s})}^{\sigma})
\max_{j\in\{2,3,4\}}
      \left\|u-v\right\|_{L^{p_{j}}(I,H_{q_{j}}^{s})}\\
&~~~~~~~~~~~~~~~~~+|I|^{\tilde{\theta}_2}(\left\|u\right\|_{L^{p_{2}}(I,H_{q_{2}}^{s})}^{\hat{\sigma}_{\star}(s)-1} +\left\|v\right\|_{L^{p_{2}}(I,H_{q_{2}}^{s})}^{\hat{\sigma}_{\star}(s)-1}) \left\|u\right\|_{L^{p_{3}}(I,H_{q_{3}}^{s})} \left\|u-v\right\|_{L^{\tilde{p}_{2}}(I,H_{\tilde{q}_{2}}^{s-\varepsilon_2})}^{\sigma-\hat{\sigma}_{\star}(s)+1},
\end{split}\end{eqnarray}
for some
$(\tilde{p}_{j},\tilde{q}_{j})\in B_{0}$, $\tilde{\theta}_{j}>0$, $\varepsilon_{j}>0$ with $s-\varepsilon_j>0$ $(j=1,2)$.
\end{lemma}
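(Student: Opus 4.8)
The plan is to reduce the weighted difference estimate to the unweighted nonlinear estimates of Section \ref{sec 3.} by peeling off the factor $|x|^{-b}$ with the fractional product rule, and then to turn the resulting fixed-time bounds into space--time Strichartz norms by a H\"older argument in the time variable. The first step is the splitting $|x|^{-b}=\chi(x)|x|^{-b}+(1-\chi(x))|x|^{-b}$, which is forced by the fact that $|x|^{-b}$ is singular at the origin and decays only slowly at infinity, so that the two pieces lie in different homogeneous Sobolev spaces (Remark \ref{rem 4.1.}). The near-origin piece $\chi(x)|x|^{-b}$ will be handled with a single biharmonic-admissible pair $(p_{1},q_{1})$, producing \eqref{GrindEQ__4_1_}/\eqref{GrindEQ__4_3_}; the far-field piece $(1-\chi(x))|x|^{-b}$ is more rigid and needs several pairs $(p_{j},q_{j})$, $j\in\{2,3,4\}$, which is the source of the $\max_{j}$ and of the two interval powers $|I|^{\theta_{2}}+|I|^{\theta_{3}}$ in \eqref{GrindEQ__4_2_}/\eqref{GrindEQ__4_4_}.

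For the fixed-time spatial estimate I would first select the admissible pair $(a_{i},b_{i})\in A$ and estimate the weighted nonlinearity in $\dot H^{s}_{\tilde b_{i}}$ at an intermediate integrability exponent $\tilde b_{i}$, passing to the target space $\dot H^{s-\gamma_{a_{i}',b_{i}'}-4}_{b_{i}'}$ afterwards by the Sobolev embedding of Lemma \ref{lem 2.1.} (which is available since one can arrange $\gamma_{a_{i}',b_{i}'}\ge-4$ while keeping $(a_{i},b_{i})$ admissible). Writing $N:=|u|^{\sigma}u-|v|^{\sigma}v$ and letting $w$ stand for either weight, the fractional product rule (Lemma \ref{lem 2.5.}) yields a bound of the schematic form
\[
\|wN\|_{\dot H^{s}_{\tilde b_{i}}}\lesssim \|w\|_{\dot H^{s}_{m}}\,\|N\|_{L^{n}}+\|w\|_{L^{m'}}\,\|N\|_{\dot H^{s}_{n'}},
\]
with the exponents tied by the appropriate H\"older relations. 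The weight factors $\|w\|_{\dot H^{s}_{m}}$ and $\|w\|_{L^{m'}}$ are finite by Remark \ref{rem 4.1.}: on the near-origin piece one needs $b+s<d/m$, on the far-field piece $b+s>d/m$, and it is precisely these two requirements, balanced against admissibility, that the hypothesis $b<\min\{4,d,\tfrac{3}{2}d-s,\tfrac{d}{2}+2-s\}$ guarantees. The difference factors $\|N\|_{L^{n}}$ and $\|N\|_{\dot H^{s}_{n'}}$ are then controlled by Lemma \ref{lem 3.5.} and by Lemma \ref{lem 3.1.} (when $s<1$) or Lemma \ref{lem 3.2.} (when $s\ge1$); the purely Lebesgue and homogeneous factors these lemmas output are finally collected into the nonhomogeneous norms $\|\cdot\|_{H^{s}_{q_{j}}}$ via Lemmas \ref{lem 2.1.}--\ref{lem 2.2.} and Corollary \ref{cor 2.3.}, the auxiliary exponent $q$ of Section \ref{sec 3.} being fixed by the embedding $\dot H^{s}_{q_{j}}\hookrightarrow L^{q}$. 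When $\sigma$ is an even integer or $\sigma\ge\hat\sigma_{\star}(s)$, the last, non-Lipschitz term of Lemma \ref{lem 3.2.} is absorbed into the leading ones and one obtains the clean bounds \eqref{GrindEQ__4_1_}/\eqref{GrindEQ__4_2_}; when only $\tilde\sigma_{\star}(s)\le\sigma<\hat\sigma_{\star}(s)$ holds, that term survives and produces the extra contributions of \eqref{GrindEQ__4_3_}/\eqref{GrindEQ__4_4_}, carrying the fractional power $\sigma-\hat\sigma_{\star}(s)+1$ and the derivative loss $H^{s-\varepsilon_{j}}$.

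The last step is integration in time. Taking the $L^{a_{i}'}_{t}$ norm of the fixed-time bound and applying H\"older against a relation of the type $\tfrac{1}{a_{i}'}=\tfrac{\sigma+1}{p_{j}}+\theta$ (with the analogous relation involving $\hat\sigma_{\star}(s)$ for the extra terms) produces the gain $|I|^{\theta}$; the strict positivity $\theta>0$ is exactly the subcriticality condition $\sigma<\sigma_{c}(s)$, i.e. $s>s_{c}$. The main obstacle is not any single estimate but the simultaneous bookkeeping: one must exhibit pairs $(a_{i},b_{i})\in A$, $(p_{j},q_{j})\in B_{0}$ and intermediate exponents $m,n$ that at once place each weight in the Sobolev space demanded by Remark \ref{rem 4.1.}, make the relation $\tfrac1p=\tfrac1r+\tfrac{\sigma}{q}$ and the embeddings of Section \ref{sec 3.} applicable, keep every pair biharmonic-admissible so that Lemma \ref{lem 2.9.} applies, and still leave a strictly positive power of $|I|$. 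Showing that this feasible region of exponents is nonempty under the stated ranges of $d,s,b,\sigma$ --- with the far-field piece genuinely requiring more than one pair --- is the technical heart of the proof, and the constraints $b<\min\{4,d,\tfrac{3}{2}d-s,\tfrac{d}{2}+2-s\}$ together with $\sigma<\sigma_{c}(s)$ are exactly what make it so.
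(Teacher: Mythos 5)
Your proposal follows essentially the same route as the paper's proof: the same $\chi/(1-\chi)$ splitting of the weight, the same fractional-product-rule step that places each piece of $|x|^{-b}$ in a Lebesgue or homogeneous Sobolev space via Remark \ref{rem 4.1.}, the same use of Lemmas \ref{lem 3.1.}, \ref{lem 3.2.} and \ref{lem 3.5.} for the difference $|u|^{\sigma}u-|v|^{\sigma}v$, the same H\"older-in-time step with positivity of $\theta$ coming from subcriticality, and the same mechanism for assertion (2) (the non-Lipschitz term of Lemma \ref{lem 3.2.} surviving and being absorbed into a slightly perturbed pair $(\tilde{p}_j,\tilde{q}_j)\in B_{0}$ with an $\varepsilon_j$ derivative loss). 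The one step you explicitly leave open --- verifying that the region of exponents satisfying all constraints simultaneously is nonempty --- is precisely what the paper discharges not by a new computation but by importing the explicit exponent systems \eqref{GrindEQ__4_5_}, \eqref{GrindEQ__4_14_}, \eqref{GrindEQ__4_23_}, \eqref{GrindEQ__4_30_} and the admissible pairs already constructed in Lemma 3.2 of \cite{AKR22}, treated separately in the two cases $s<\frac{d}{2}$ and $s\ge\frac{d}{2}$; to make your plan a complete proof you would either cite that construction or reproduce it.
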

\begin{proof} Without loss of generality, we assume that $\sigma$ is not an even integer.
We use the similar argument as in the proof of Lemma 3.2 of \cite{AKR22}.
We divide the study in two cases: $0\le s<\frac{d}{2}$ and $s>\frac{d}{2}$.

{\bf Case 1.} We consider the case $0\le s<\frac{d}{2}$.

First, we prove \eqref{GrindEQ__4_1_} and \eqref{GrindEQ__4_3_}.
We choose the same admissible pairs $(a_{1},b_{1})\in A$ and $(p_{1},q_{1})\in B_{0}$ as in Case 1 in the proof of Lemma 3.2 of \cite{AKR22}, which satisfy the following system:
\begin{equation} \label{GrindEQ__4_5_}
\left\{\begin{array}{l}
{\frac{1}{b_{1}'}-\sigma\left(\frac{1}{q_{1}}-\frac{s}{d}\right)-\left(\frac{1}{q_{1}}-\frac{\gamma_{a_{1}',b_{1}'}+4}{d}\right)>\frac{b}{d},}\\
{\frac{1}{a_{1}'}-\frac{\sigma +1}{p_{1}}>0,}\\
{\frac{1}{q_{1}}-\frac{s}{d}>0,}\\
{s-\gamma_{a_{1}',b_{1}'}-4\ge0,}\\
{\gamma_{a_{1}',b_{1}'}+4\ge 0.}\\
\end{array}\right.
\end{equation}
Putting
\begin{equation} \label{GrindEQ__4_6_}
\left\{\begin{array}{l}
{\frac{1}{\alpha_{1}}:=\frac{1}{q_{1}}-\frac{s}{d},~\frac{1}{\beta_{1}}:=\frac{1}{q_{1}}-\frac{\gamma_{a_{1}',b_{1}'}+4}{d},}\\
{\frac{1}{r_{1}}:=\frac{\sigma}{\alpha_{1}}+\frac{1}{\beta_{1}},~\frac{1}{\gamma _{1}}:=\frac{1}{b_{1}'}-\frac{1}{r_{1}},}\\
{\frac{1}{\bar{r}_{1}}:=\frac{\sigma}{\alpha_{1}},~~\frac{1}{\bar{\gamma}_{1}}:=\frac{1}{b_{1}'}-\frac{1}{\bar{r}_{1}},} \\
\end{array}\right.
\end{equation}
it follows from Lemma \ref{lem 2.1.}, the third and last equations in \eqref{GrindEQ__4_5_} that
\begin{equation} \label{GrindEQ__4_7_}
\dot{H}_{q_{1} }^{s}\hookrightarrow L^{\alpha_{1}}, ~\dot{H}_{q_{1} }^{s}\hookrightarrow\dot{H}_{\beta_{1} }^{s-\gamma_{a_{1}',b_{1}'}-4}.
\end{equation}
Furthermore, it follows from Remark \ref{rem 4.1.} and the first equation in \eqref{GrindEQ__4_5_} that
\begin{equation} \label{GrindEQ__4_8_}
\left\| \chi(x)|x|^{-b} \right\| _{L^{\gamma _{1}}}<\infty, ~\left\| \chi(x)|x|^{-b} \right\| _{\dot{H}_{\bar{\gamma}_{1} }^{s-\gamma_{a_{1}',b_{1}'}-4} }<\infty.
\end{equation}
Hence, using \eqref{GrindEQ__4_6_}--\eqref{GrindEQ__4_8_} and Lemma \ref{lem 3.5.} that
\begin{eqnarray}\begin{split} \label{GrindEQ__4_9_}
&\left\| \chi(x)|x|^{-b} (|u|^{\sigma}u-|v|^{\sigma}v)\right\| _{\dot{H}_{b_{1}'}^{s-\gamma_{a_{1}',b_{1}'}-4}}\\
&~~~\lesssim \left\| \chi(x)|x|^{-b} \right\| _{L^{\gamma _{1}}} \left\| |u|^{\sigma}u-|v|^{\sigma}v\right\| _{\dot{H}_{r_{1} }^{s-\gamma_{a_{1}',b_{1}'}-4} }
          +\left\| \chi(x)|x|^{-b} \right\| _{\dot{H}_{\bar{\gamma}_{1} }^{s-\gamma_{a_{1}',b_{1}'}-4} } \left\| |u|^{\sigma}u-|v|^{\sigma}v\right\| _{L^{\bar{r}_{1} } }\\
&~~~\lesssim \left\| |u|^{\sigma}u-|v|^{\sigma}v\right\| _{\dot{H}_{r_{1} }^{s-\gamma_{a_{1}',b_{1}'}-4} }
          +(\left\| u\right\| _{\dot{H}_{q_{1} }^{s}}^{\sigma } +\left\| v\right\| _{\dot{H}_{q_{1} }^{s}}^{\sigma })
              \left\|u-v\right\| _{\dot{H}_{q_{1} }^{s}}.
\end{split}\end{eqnarray}

First, we consider the case $s\le 1$. Using the fifth equation in \eqref{GrindEQ__4_5_}, we can see that
$$
s-\gamma_{a_{1}',b_{1}'}-4\le s\le 1.
$$
Hence, it follows from the assumption \eqref{GrindEQ__1_10_}, \eqref{GrindEQ__4_6_}, \eqref{GrindEQ__4_7_} and Lemma \ref{lem 3.1.} that
\begin{equation}\nonumber
\left\| |u|^{\sigma}u-|v|^{\sigma}v\right\| _{\dot{H}_{r_{1} }^{s-\gamma_{a_{1}',b_{1}'}-4} }
\le (\left\| u\right\| _{\dot{H}_{q_{1} }^{s}}^{\sigma } +\left\| v\right\| _{\dot{H}_{q_{1} }^{s}}^{\sigma })
              \left\|u-v\right\| _{\dot{H}_{q_{1} }^{s}}.
\end{equation}
Combining the above inequality with \eqref{GrindEQ__4_9_}, \eqref{GrindEQ__4_10_}, the second equation in \eqref{GrindEQ__4_5_} and H\"{o}lder inequality, we get \eqref{GrindEQ__4_1_}.

Next, we consider the case $1<s<\frac{d}{2}$ with $d\ge 3$. Since $\sigma\ge \tilde{\sigma}_{\star}(s)\ge 1$, we can choose $(a_{1},b_{1}):=(2,\frac{2d}{d-2})$ as in the proof of Lemma 3.2 of \cite{AKR22}. Note that $s-\gamma_{a_{1}',b_{1}'}-4=s-1$. If either $s\le 2$ or $s>2$ and $\sigma\ge \lceil s\rceil-1$, we can also get \eqref{GrindEQ__4_1_} by using Lemmas \ref{lem 3.1.}, \ref{lem 3.2.} and the same argument as in the case $s\le 1$. It remains to consider the case $s>2$ and $\lceil s\rceil-1>\sigma\ge \lceil s\rceil-2$. Using \eqref{GrindEQ__4_6_}, \eqref{GrindEQ__4_7_}, \eqref{GrindEQ__4_9_} and Lemma \ref{lem 3.2.}, we have
\begin{eqnarray}\begin{split} \label{GrindEQ__4_10_}
&\left\| \chi(x)|x|^{-b} (|u|^{\sigma}u-|v|^{\sigma}v)\right\| _{\dot{H}_{b_{1}'}^{s-\gamma_{a_{1}',b_{1}'}-4}}\\
&~~~\lesssim(\left\| u\right\| _{\dot{H}_{q_{1} }^{s}}^{\sigma } +\left\| v\right\| _{\dot{H}_{q_{1} }^{s}}^{\sigma })
              \left\|u-v\right\| _{\dot{H}_{q_{1} }^{s}}
          +(\left\| u\right\| _{\dot{H}_{q_{1} }^{s}}^{\hat{\sigma}_{\star}(s)} +\left\| v\right\| _{\dot{H}_{q_{1} }^{s}}^{\hat{\sigma}_{\star}(s)})\left\| u-v\right\| _{L^{\alpha_{1}}}^{\sigma -\hat{\sigma}_{\star}(s)+1}.
\end{split}\end{eqnarray}
Putting $\theta_{1}:=\frac{1}{a_{1}'}-\frac{\sigma +1}{p_{1}}>0$, the second equation in the system \eqref{GrindEQ__4_5_} imply that $\theta_{1}>0$. Putting
$$
\frac{1}{\hat{p}_1}:=\frac{1}{p_1}+\frac{\theta_1}{\sigma-\hat{\sigma}_{\star}(s)+1},
$$
we can see that $0<\hat{p}_{1}<p_1$ and
\begin{equation} \label{GrindEQ__4_11_}
\frac{1}{a_{1}'}=\frac{\hat{\sigma}_{\star}(s)}{p_{1}}+\frac{\sigma-\hat{\sigma}_{\star}(s)+1}{\hat{p}_1}
\end{equation}
Hence, using \eqref{GrindEQ__4_10_}, \eqref{GrindEQ__4_11_} and H\"{o}lder inequality, we immediately get
\begin{eqnarray}\begin{split} \label{GrindEQ__4_12_}
&\left\|\chi(x)|x|^{-b}(|u|^{\sigma}u-|v|^{\sigma}v)\right\|_{L^{a_{1}'}(I,\dot{H}_{b_{1}'}^{s-\gamma_{a_{1}',b_{1}'}-4})}\\
&~~~~~~~~\lesssim |I|^{\theta_1}(\left\|u\right\|_{L^{p_{1}}(I,H_{q_{1}}^{s})}^{\sigma}+\left\|v\right\|_{L^{p_{1}}(I,H_{q_{1}}^{s})}^{\sigma})
    \left\|u-v\right\|_{L^{p_{1}}(I,H_{q_{1}}^{s})}\\
&~~~~~~~~~~~~~~~~~+(\left\|u\right\|_{L^{p_{1}}(I,H_{q_{1}}^{s})}^{\hat{\sigma}_{\star}(s)} +\left\|v\right\|_{L^{p_{1}}(I,H_{q_{1}}^{s})}^{\hat{\sigma}_{\star}(s)}) \left\|u-v\right\|_{L^{\hat{p}_{1}}(I,L^{\alpha_1})}^{\sigma-\hat{\sigma}_{\star}(s)+1},
\end{split}\end{eqnarray}
Since $(p_1,q_1)\in B_{0}$, $0<\hat{p}_{1}<p_1$ and $s>0$, we can take $\eta_1>0$ sufficiently small such that
\begin{equation} \label{GrindEQ__4_13_}
(\tilde{p}_1,\tilde{q}_1)\in B_0,~\hat{p}_{1}<\tilde{p}_1<p_1,~s-\varepsilon_1>0,
\end{equation}
where
\begin{equation} \nonumber
\tilde{q}_1:=q_1+\eta_1,~\varepsilon_1:=\frac{\eta_1d}{q_1(q_1+\eta_1)}.
\end{equation}
Noticing $\frac{1}{\alpha_{1}}=\frac{1}{q_{1}}-\frac{s}{d}=\frac{1}{\tilde{q}_{1}}-\frac{s-\varepsilon_1}{d}$, we have the embedding $\dot{H}_{\tilde{q}_1}^{s-\varepsilon_1}\hookrightarrow L^{\alpha_1}$. Hence, using \eqref{GrindEQ__4_12_}, \eqref{GrindEQ__4_13_} and H\"{o}lder inequality, we have \eqref{GrindEQ__4_3_} with $\tilde{\theta}_1:=\frac{1}{\hat{p}_1}-\frac{1}{\tilde{p}_1}>0$.

Next, we prove \eqref{GrindEQ__4_2_} and \eqref{GrindEQ__4_4_}.
We take the same admissible pairs $(a_{2},b_{2})\in A$ and $(p_{i},q_{i})\in B_{0}$ $(i=2,3,4)$ as in Case 1 in the proof of Lemma 3.2 of \cite{AKR22}, which satisfy the following system:
\begin{equation} \label{GrindEQ__4_14_}
\left\{\begin{array}{ll}
{\frac{1}{q_{2}}-\frac{s}{d}\le\frac{1}{\alpha_{2}}<\frac{1}{q_{2}},\frac{1}{q_{3}}-\frac{\gamma_{a_{2}',b_{2}'}+4}{d}\le\frac{1}{\beta_{2}}\le\frac{1}{q_{3}}},~&~~~~~~~~{(1)}\\
{0<\frac{1}{b_{2}'}-\frac{\sigma}{\alpha_{2}}-\frac{1}{\beta_{2}}<\frac{b}{d},}~&~~~~~~~~{(2)}\\
{\frac{1}{a_{2}'}-\frac{\sigma}{p_{2}}-\frac{1}{p_{3}}>0,}~&~~~~~~~~{(3)}\\
{\frac{1}{q_{2}}>\frac{s}{d},\frac{1}{q_{3}}>\frac{\gamma_{a_{2}',b_{2}'}+4}{d},}~&~~~~~~~~{(4)}\\
{\frac{1}{q_{4}}-\frac{s}{d}\le\frac{1}{\alpha_{4}}\le\frac{1}{q_{4}},}~&~~~~~~~~{(5)}\\
{0<\frac{1}{b_{2}'}-\frac{\sigma+1}{\alpha_{4}}<\frac{b+s-\gamma_{a_{2}',b_{2}'}-4}{d},}~&~~~~~~~~{(6)}\\
{\frac{1}{a_{2}'}-\frac{\sigma+1}{p_{4}}>0,\frac{1}{q_{4}}>\frac{s}{d},}~&~~~~~~~~{(7)}\\
{s-\gamma_{a_{2}',b_{2}'}-4\ge0,\gamma_{a_{2}',b_{2}'}+4\ge 0.}~&~~~~~~~~{(8)}\\
\end{array}\right.
\end{equation}
Using Lemma \ref{lem 2.2.}, Corollary \ref{cor 2.3.} and the equations (1), (4), (5), (7), (8) in the system \eqref{GrindEQ__4_14_}, we have the embeddings:
\begin{equation} \label{GrindEQ__4_15_}
H_{q_{2}}^{s}\hookrightarrow L^{\alpha_{2}},~H_{q_{3}}^{s}\hookrightarrow \dot{H}_{\beta_{2}}^{s-\gamma_{a_{2}',b_{2}'}-4},
~H_{q_{4} }^{s}\hookrightarrow L^{\alpha_{4}}.
\end{equation}
Putting
\begin{equation} \label{GrindEQ__4_16_}
\left\{\begin{array}{l}
{\frac{1}{r_{2}}:=\frac{\sigma}{\alpha_{2}}+\frac{1}{\beta_{2}},~\frac{1}{\gamma _{2}}:=\frac{1}{b_{2}'}-\frac{1}{r_{2}},}\\
{\frac{1}{r_{4}}:=\frac{\sigma+1}{\alpha_{4}},~\frac{1}{\gamma _{4}}:=\frac{1}{b_{2}'}-\frac{1}{r_{4}},}\\
\end{array}\right.
\end{equation}
it follows from Remark \ref{rem 4.1.} and the equations (2), (6) in \eqref{GrindEQ__4_14_} that
\begin{equation} \label{GrindEQ__4_17_}
\left\|(1-\chi(x))|x|^{-b} \right\| _{L^{\gamma _{2}}}<\infty,
~\left\| (1-\chi(x))|x|^{-b}\right\| _{\dot{H}_{\gamma_{4}}^{s-\gamma_{a_{2}',b_{2}'}-4}}<\infty.
\end{equation}
Hence, using \eqref{GrindEQ__4_14_}--\eqref{GrindEQ__4_17_} and Lemma \ref{lem 3.5.}, we have
\begin{eqnarray}\begin{split} \label{GrindEQ__4_18_}
&\left\| (1-\chi(x))|x|^{-b} (|u|^{\sigma}u-|v|^{\sigma}v)\right\| _{\dot{H}_{b_{2}'}^{s-\gamma_{a_{2}',b_{2}'}-4}}\\
&~~~~~~~~~~~~\lesssim \left\| (1-\chi(x))|x|^{-b} \right\| _{L^{\gamma _{2}}} \left\| |u|^{\sigma}u-|v|^{\sigma}v\right\| _{\dot{H}_{r_{2} }^{s-\gamma_{a_{2}',b_{2}'}-4} }\\
&~~~~~~~~~~~~~~~~~~~+\left\| (1-\chi(x))|x|^{-b} \right\| _{\dot{H}_{\gamma_{4} }^{s-\gamma_{a_{2}',b_{2}'}-4} } \left\| |u|^{\sigma}u-|v|^{\sigma}v\right\| _{L^{r_4}}\\
&~~~~~~~~~~~~\lesssim\left\| |u|^{\sigma}u-|v|^{\sigma}v\right\| _{\dot{H}_{r_{2} }^{s-\gamma_{a_{2}',b_{2}'}-4} }+(\left\| u\right\| _{H_{q_{4} }^{s}}^{\sigma } +\left\| v\right\| _{H_{q_{4} }^{s}}^{\sigma }) \left\|u-v\right\| _{\dot{H}_{q_{4} }^{s}}.
\end{split}\end{eqnarray}

If $s\le 1$, we can get \eqref{GrindEQ__4_2_} by using Lemma \ref{lem 3.1.}, \eqref{GrindEQ__4_15_}--\eqref{GrindEQ__4_18_} and the assumption \eqref{GrindEQ__1_10_}.

Let us consider the case $1<s<\frac{d}{2}$ with $d\ge 3$. Since $\sigma\le 1$, we can choose $(a_{2},b_{2}):=(2,\frac{2d}{d-2})$ as in the proof of Lemma 3.2 of \cite{AKR22}. Note also that $s-\gamma_{a_{2}',b_{2}'}-4=s-1$. Hence, if either $s\le 2$ or $s>2$ and $\sigma\ge \lceil s\rceil-1$, we can also get \eqref{GrindEQ__4_2_} by using Lemmas \ref{lem 3.1.}, \ref{lem 3.2.} and the same argument as in the case $s\le 1$. It remains to consider the case $s>2$ and $\lceil s\rceil-1>\sigma\ge \lceil s\rceil-2$. Using \eqref{GrindEQ__4_15_}, \eqref{GrindEQ__4_16_}, \eqref{GrindEQ__4_18_} and Lemma \ref{lem 3.2.}, we have
\begin{eqnarray}\begin{split} \label{GrindEQ__4_19_}
&\left\| (1-\chi(x))|x|^{-b} (|u|^{\sigma}u-|v|^{\sigma}v)\right\| _{\dot{H}_{b_{2}'}^{s-\gamma_{a_{2}',b_{2}'}-4}}\\
&~~~~~~~~~~~~\lesssim(\left\| u\right\| _{H_{q_{2} }^{s}}^{\sigma } +\left\| v\right\| _{H_{q_{2} }^{s}}^{\sigma })
              \left\|u-v\right\| _{\dot{H}_{q_{3} }^{s}}+(\left\| u\right\| _{H_{q_{4} }^{s}}^{\sigma } +\left\| v\right\| _{H_{q_{4} }^{s}}^{\sigma })
              \left\|u-v\right\| _{\dot{H}_{q_{4} }^{s}}\\
&~~~~~~~~~~~~~~~~~~~+(\left\| u\right\| _{H_{q_{2} }^{s}}^{\hat{\sigma}_{\star}(s)-1} +\left\| v\right\| _{H_{q_{2} }^{s}}^{\hat{\sigma}_{\star}(s)-1})\left\| u\right\| _{H_{q_{3} }^{s}}\left\| u-v\right\| _{L^{\alpha_{2}}}^{\sigma -\hat{\sigma}_{\star}(s)+1}.
\end{split}\end{eqnarray}
Putting
$$
\theta_{2}:=\frac{1}{a_{2}'}-\frac{\sigma }{p_{2}}-\frac{1}{p_3}>0,~\theta_{3}:=\frac{1}{a_{2}'}-\frac{\sigma+1}{p_{4}},
$$
the equations (3) and (7) in the system \eqref{GrindEQ__4_14_} imply that $\theta_{2}>0$ and $\theta_{3}>0$.
Putting
$$
\frac{1}{\hat{p}_2}:=\frac{1}{p_2}+\frac{\theta_2}{\sigma-\hat{\sigma}_{\star}(s)+1},
$$
we can see that $0<\tilde{p}_{2}<p_2$ and
\begin{equation} \label{GrindEQ__4_20_}
\frac{1}{a_{2}'}=\frac{\sigma-\hat{\sigma}_{\star}(s)+1}{\hat{p}_2}+\frac{\hat{\sigma}_{\star}(s)-1}{p_{2}}+\frac{1}{p_3}.
\end{equation}
Hence, using \eqref{GrindEQ__4_19_}, \eqref{GrindEQ__4_20_} and H\"{o}lder inequality, we immediately get
\begin{eqnarray}\begin{split} \label{GrindEQ__4_21_}
&\left\|(1-\chi(x))|x|^{-b}(|u|^{\sigma}u-|v|^{\sigma}v)\right\|_{L^{a_{2}'}(I,\dot{H}_{b_{2}'}^{s-\gamma_{a_{2}',b_{2}'}-4})}\\
&~~~~~~~\lesssim (|I|^{\theta_2}+|I|^{\theta_3})\max_{j\in\{2,4\}}(\left\|u\right\|_{L^{p_{j}}(I,H_{q_{j}}^{s})}^{\sigma}+\left\|v\right\|_{L^{p_{j}}(I,H_{q_{j}}^{s})}^{\sigma})
\max_{j\in\{3,4\}}
      \left\|u-v\right\|_{L^{p_{j}}(I,H_{q_{j}}^{s})}\\
&~~~~~~~~~~~~~~~~~+(\left\|u\right\|_{L^{p_{2}}(I,H_{q_{2}}^{s})}^{\hat{\sigma}_{\star}(s)-1} +\left\|v\right\|_{L^{p_{2}}(I,H_{q_{2}}^{s})}^{\hat{\sigma}_{\star}(s)-1}) \left\|u\right\|_{L^{p_{3}}(I,H_{q_{3}}^{s})} \left\|u-v\right\|_{L^{\hat{p}_{2}}(I,L^{\alpha_2})}^{\sigma-\hat{\sigma}_{\star}(s)+1},
\end{split}\end{eqnarray}
Since $(p_2,q_2)\in B_{0}$, $0<\hat{p}_{2}<p_2$ and $s>0$, we can take $\eta_2>0$ sufficiently small such that
\begin{equation} \label{GrindEQ__4_22_}
\tilde{q}_2<\alpha_2,~(\tilde{p}_2,\tilde{q}_2)\in B_0,~\hat{p}_{2}<\tilde{p}_2<p_2,~s-\varepsilon_2>0,
\end{equation}
where
\begin{equation} \nonumber
\tilde{q}_2:=q_2+\eta_2,~\varepsilon_2:=\frac{\eta_2d}{q_2(q_2+\eta_2)}.
\end{equation}
Noticing $\frac{1}{\tilde{q}_{2}}-\frac{s-\varepsilon_2}{d}\le \frac{1}{\alpha_{2}}<\frac{1}{q_{2}}$, we have the embedding $H_{\tilde{q}_2}^{s-\varepsilon_2}\hookrightarrow L^{\alpha_2}$. Hence using \eqref{GrindEQ__4_21_}, \eqref{GrindEQ__4_22_} and H\"{o}lder inequality, we get \eqref{GrindEQ__4_4_} with $\tilde{\theta}_2:=\frac{1}{\hat{p}_2}-\frac{1}{\tilde{p}_2}>0$.

{\bf Case 2.} We consider the case $s\ge\frac{d}{2}$.

First, we prove \eqref{GrindEQ__4_1_} and \eqref{GrindEQ__4_3_}. We also use the same admissible pair $(a_{1},b_{1})\in S$ as in Case 2  in the proof of Lemma 3.2 of \cite{AKR22}, which satisfy
\begin{equation} \label{GrindEQ__4_23_}
\left\{\begin{array}{l}
{2<\alpha_{1},\bar{\alpha}_{1}<\infty,~2\le \beta_{1}\le b_{1}},\\
{\frac{1}{b_{1}'}-\frac{\sigma}{\alpha_{1}}-\frac{1}{\beta_{1}}>\frac{b}{d},~\frac{1}{b_{1}'}-\frac{\sigma}{\bar{\alpha}_{1}}>\frac{b+s-\frac{2}{a_{1}}}{d}}.\\
\end{array}\right.
\end{equation}
Using Lemma \ref{lem 2.2.}, Corollary \ref{cor 2.3.} and the first equation in the system \eqref{GrindEQ__4_23_}, we have the embeddings:
\begin{equation} \label{GrindEQ__4_24_}
H^{s}\hookrightarrow L^{\alpha_{1}},~H^{s}\hookrightarrow L^{\bar{\alpha}_{1}},~H^{s}\hookrightarrow \dot{H}_{\beta_{1}}^{s-\frac{2}{a_{1}}},
\end{equation}
where we used the fact $\frac{2}{a_1}+\frac{d}{b_1}=\frac{d}{2}$.
Putting
\begin{equation} \label{GrindEQ__4_25_}
\left\{\begin{array}{l}
{\frac{1}{r_{1}}:=\frac{\sigma}{\alpha_{1}}+\frac{1}{\beta_{1}},~\frac{1}{\gamma _{1}}:=\frac{1}{b_{1}'}-\frac{1}{r_{1}},}\\
{\frac{1}{\bar{r}_{1}}:=\frac{\sigma+1}{\bar{\alpha}_{1}},~\frac{1}{\bar{\gamma}_{1}}:=\frac{1}{b_{1}'}-\frac{1}{\bar{r}_{1}}.}\\
\end{array}\right.
\end{equation}
it follows from Remark \ref{rem 4.1.} and the second equation in \eqref{GrindEQ__4_23_} that
\begin{equation} \label{GrindEQ__4_26_}
\left\|\chi(x)|x|^{-b} \right\| _{L^{\gamma _{1}}}<\infty,
~\left\| \chi(x)|x|^{-b}\right\| _{\dot{H}_{\bar{\gamma}_{1}}^{s-\frac{2}{a_1}}}<\infty.
\end{equation}
Hence, it follows from \eqref{GrindEQ__4_24_}--\eqref{GrindEQ__4_26_} and Lemma \ref{lem 3.5.} that
\begin{eqnarray}\begin{split} \label{GrindEQ__4_27_}
&\left\| \chi(x)|x|^{-b} (|u|^{\sigma}u-|v|^{\sigma}v)\right\| _{\dot{H}_{b_{1}'}^{s-\frac{2}{a_{1}}}}\\
&~~~~~~~~~\lesssim \left\|\chi(x)|x|^{-b} \right\| _{L^{\gamma _{1}}} \left\| |u|^{\sigma}u-|v|^{\sigma}v\right\|_{\dot{H}_{r_{1}}^{s-\frac{2}{a_{1}}}}
          +\left\| \chi(x)|x|^{-b}\right\| _{\dot{H}_{\bar{\gamma}_{1}}^{s-\frac{2}{a_{1}}}} \left\||u|^{\sigma}u-|v|^{\sigma}v\right\|_{L^{\bar{r}_{1}}}\\
&~~~~~~~~~\lesssim\left\| |u|^{\sigma}u-|v|^{\sigma}v\right\|_{\dot{H}_{r_{1}}^{s-\frac{2}{a_{1}}}}
+(\left\| u\right\| _{H^{s}}^{\sigma } +\left\| v\right\| _{H^{s}}^{\sigma })
              \left\|u-v\right\| _{H^{s}}.
\end{split}\end{eqnarray}
As in the proof of Lemma 3.2 of \cite{AKR22}, we take $b_{1}:=\frac{2d}{d-2}$ if $d\ge 3$, and $b_{1}(>2)$ large enough such that $\lceil s-\frac{d}{2}+\frac{2}{b_1}\rceil=\left[s-\frac{d}{2}\right]$ if $d\le 2$.
If $\sigma\ge \hat\sigma_{\star}(s)$, then we can get \eqref{GrindEQ__4_1_} by using the assumption \eqref{GrindEQ__1_10_}, \eqref{GrindEQ__4_24_}--\eqref{GrindEQ__4_27_}, Lemmas \ref{lem 3.1.} and \ref{lem 3.2.}.
It remains to consider the case $\hat\sigma_{\star}(s)>\sigma\ge \tilde\sigma_{\star}(s)$.
It follows from \eqref{GrindEQ__4_24_}--\eqref{GrindEQ__4_27_}, Lemma \ref{lem 3.2.} and H\"{o}lder inequality that
\begin{eqnarray}\begin{split} \label{GrindEQ__4_28_}
&\left\|\chi(x)|x|^{-b}(|u|^{\sigma}u-|v|^{\sigma}v)\right\|_{L^{a_{1}'}(I,\dot{H}_{b_{1}'}^{s-\frac{2}{a_1}})}\\
&~~~~~~~~\lesssim |I|^{\theta_1}(\left\|u\right\|_{L^{\infty}(I,H^{s})}^{\sigma}+\left\|v\right\|_{L^{\infty}(I,H^{s})}^{\sigma})
    \left\|u-v\right\|_{L^{\infty}(I,H^{s})}\\
&~~~~~~~~~~~~~~~~~+(\left\|u\right\|_{L^{\infty}(I,H^{s})}^{\hat{\sigma}_{\star}(s)} +\left\|v\right\|_{L^{\infty}(I,H^{s})}^{\hat{\sigma}_{\star}(s)}) \left\|u-v\right\|_{L^{\hat{p}_{1}}(I,L^{\alpha_1})}^{\sigma-\hat{\sigma}_{\star}(s)+1},
\end{split}\end{eqnarray}
where $\theta_1:=1-\frac{1}{a_1}$ and $\frac{\sigma-\lceil s\rceil+1}{\hat{p}_1}:=\frac{1}{a_{1}'}$. We then take $\eta_1>0$ sufficiently small such that
\begin{equation} \label{GrindEQ__4_29_}
\tilde{q}_1:=2+\eta_1<\alpha_1,~(\tilde{p}_1,\tilde{q}_1)\in B_0,~\hat{p}_{1}<\tilde{p}_1<\infty,~s-\varepsilon_1>0,
\end{equation}
where $\varepsilon_1:=\frac{\eta_1d}{2(2+\eta_1)}$.
Using \eqref{GrindEQ__4_29_} and Corollary \ref{cor 2.3.}, we have the embedding $H_{\tilde{q}_1}^{s-\varepsilon_1}\hookrightarrow L^{\alpha_1}$. Hence, using \eqref{GrindEQ__4_28_}, \eqref{GrindEQ__4_29_} and H\"{o}lder inequality, we immediately get \eqref{GrindEQ__4_3_} with $\tilde{\theta}_1:=\frac{1}{\hat{p}_1}-\frac{1}{\tilde{p}_1}>0$.

Next, we prove \eqref{GrindEQ__4_2_} and \eqref{GrindEQ__4_4_}. We take the same admissible pairs $(a_{2},b_{2})\in A$ and $(p_{i},q_{i})\in B_{0}$ $(i=2,3,4)$ as in Case 2 in the proof of Lemma 3.2 of \cite{AKR22}, which satisfy the following system:
\begin{equation} \label{GrindEQ__4_30_}
\left\{\begin{array}{ll}
{0<\frac{1}{\alpha_{2}}<\frac{1}{q_{2}},\frac{1}{q_{3}}-\frac{\gamma_{a_{2}',b_{2}'}+4}{d}\le\frac{1}{\beta_{2}}\le\frac{1}{q_{3}}},~&~~~~~~~~{(1)}\\
{0<\frac{1}{b_{2}'}-\frac{\sigma}{\alpha_{2}}-\frac{1}{\beta_{2}}<\frac{b}{d},}~&~~~~~~~~{(2)}\\
{\frac{1}{a_{2}'}-\frac{\sigma}{p_{2}}-\frac{1}{p_{3}}>0,}~&~~~~~~~~{(3)}\\
{\frac{1}{q_{3}}>\frac{\gamma_{a_{2}',b_{2}'}+4}{d},}~&~~~~~~~~{(4)}\\
{0<\frac{1}{\alpha_{4}}\le\frac{1}{q_{4}},}~&~~~~~~~~{(5)}\\
{0<\frac{1}{b_{2}'}-\frac{\sigma+1}{\alpha_{4}}<\frac{b+s-\gamma_{a_{2}',b_{2}'}-4}{d},}~&~~~~~~~~{(6)}\\
{\frac{1}{a_{2}'}-\frac{\sigma+1}{p_{4}}>0,}~&~~~~~~~~{(7)}\\
{s-\gamma_{a_{2}',b_{2}'}-4\ge0,\gamma_{a_{2}',b_{2}'}+4\ge 0.}~&~~~~~~~~{(8)}\\
\end{array}\right.
\end{equation}
Using \eqref{GrindEQ__4_30_} and combing the argument above with that in the proof of Lemma 3.2 of \cite{AKR22}, we can get the desired result, whose proof will be omitted.
\end{proof}
Using Lemma 3.3 of \cite{AKR22} and the fact
$$
\left||u|^{\sigma}u-|v|^{\sigma}v\right|\lesssim (|u|^\sigma+|v|^\sigma)|u-v|,
$$
we immediately have the following result.
\begin{lemma}\label{lem 4.3.}
Let $d\in \mathbb N$, $s\ge 0$, $0<b<\min\left\{4,d\right\}$ and $0<\sigma<\sigma_{c}(s)$.
Then there exist $(a_{i},b_{i})\in B_{0}$ $(i=3,4)$, $(p_{j},q_{j})\in B_{0}$ $(j=5,6,7)$ and $\theta_k>0$ $(k=4,5)$ such that
\begin{equation} \label{GrindEQ__4_31_}
\left\|\chi(x)|x|^{-b}|u|^{\sigma}|u-v|\right\|_{L^{a_{3}'}(I,L^{b_{3}'})}
\lesssim |I|^{\theta_4}\left\|u\right\|^{\sigma}_{L^{p_{5}}(I,H_{q_{5}}^{s})}\left\|u-v\right\|_{L^{p_{6}}(I,L^{q_{6}})},
\end{equation}
\begin{equation} \label{GrindEQ__4_32_}
\left\|(1-\chi(x))|x|^{-b}|u|^{\sigma}|u-v|\right\|_{L^{a_{4}'}(I,L^{b_{4}'})}
\lesssim |I|^{\theta_5}\left\|u\right\|^{\sigma}_{L^{p_{7}}(I,H_{q_{7}}^{s})}\left\|u-v\right\|_{L^{p_{7}}(I,L^{q_{7}})}.
\end{equation}
where $I(\subset \mathbb R)$ is an interval and $\chi\in C_{0}^{\infty}(\mathbb R^{d})$ is given in Remark \ref{rem 4.1.}.
\end{lemma}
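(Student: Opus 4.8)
The plan is to recognize Lemma~\ref{lem 4.3.} as a direct specialization of the estimates for the pure nonlinear term $|x|^{-b}|u|^{\sigma}u$ proved in Lemma 3.3 of \cite{AKR22}. The expression $|x|^{-b}|u|^{\sigma}|u-v|$ has exactly the same multilinear structure as $|x|^{-b}|u|^{\sigma}u$: it is a product of the singular weight $|x|^{-b}$, the factor $|u|^{\sigma}$ that carries all of the Sobolev regularity, and a single trailing factor ($u-v$ in place of $u$) that only needs to be measured in a plain Lebesgue space. Consequently, the admissible pairs $(a_{i},b_{i})\in B_{0}$ $(i=3,4)$, $(p_{j},q_{j})\in B_{0}$ $(j=5,6,7)$ and the exponents $\theta_{4},\theta_{5}>0$ supplied by Lemma 3.3 of \cite{AKR22}, together with the entire H\"older/embedding chain used there, transfer verbatim once the trailing factor $u$ is replaced by $u-v$.

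Concretely, first I would split $|x|^{-b}=\chi(x)|x|^{-b}+(1-\chi(x))|x|^{-b}$ and invoke Remark~\ref{rem 4.1.} to place $\chi(x)|x|^{-b}$ in a suitable $L^{\gamma}$ and $(1-\chi(x))|x|^{-b}$ in the complementary Lebesgue space. For the localized piece I would apply H\"older's inequality in the space variable,
\[
\bigl\|\chi(x)|x|^{-b}|u|^{\sigma}|u-v|\bigr\|_{L^{b_{3}'}}
\lesssim \bigl\|\chi(x)|x|^{-b}\bigr\|_{L^{\gamma}}\,
\bigl\||u|^{\sigma}\bigr\|_{L^{\alpha/\sigma}}\,
\|u-v\|_{L^{q_{6}}},
\]
where $\tfrac{1}{b_{3}'}=\tfrac{1}{\gamma}+\tfrac{\sigma}{\alpha}+\tfrac{1}{q_{6}}$, and then use the Sobolev embedding $H_{q_{5}}^{s}\hookrightarrow L^{\alpha}$ (solvable precisely because $0<\sigma<\sigma_{c}(s)$, i.e. $s>s_{c}$) to bound $\bigl\||u|^{\sigma}\bigr\|_{L^{\alpha/\sigma}}=\|u\|_{L^{\alpha}}^{\sigma}\lesssim\|u\|_{H_{q_{5}}^{s}}^{\sigma}$. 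A final application of H\"older's inequality in time produces the factor $|I|^{\theta_{4}}$ and yields \eqref{GrindEQ__4_31_}. The identical argument applied to the far piece $(1-\chi(x))|x|^{-b}$ gives \eqref{GrindEQ__4_32_}.

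The only point requiring a word of justification is why all the exponent and admissibility constraints from \cite{AKR22} remain satisfiable when the trailing factor carries no derivatives, and this is automatic: since the full regularity budget is assigned to the factor $|u|^{\sigma}$, the factor $u-v$ sits in a weaker, derivative-free space than the original $u$ did, so every Sobolev embedding used in \cite{AKR22} only becomes easier and no constraint is tightened. Finally, the cited pointwise bound $\bigl||u|^{\sigma}u-|v|^{\sigma}v\bigr|\lesssim(|u|^{\sigma}+|v|^{\sigma})|u-v|$ records the reason these product estimates are exactly what is needed downstream: it reduces the $L^{p}L^{q}$-level control of the difference $|x|^{-b}(|u|^{\sigma}u-|v|^{\sigma}v)$ to the two terms $|x|^{-b}|u|^{\sigma}|u-v|$ and $|x|^{-b}|v|^{\sigma}|u-v|$, each handled by \eqref{GrindEQ__4_31_}--\eqref{GrindEQ__4_32_}. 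There is thus no genuine obstacle here; the content of the lemma is entirely inherited from \cite{AKR22}, and the hard analytic work was already carried out in Lemmas~\ref{lem 3.1.}--\ref{lem 3.5.} and in Lemma~\ref{lem 4.2.}.
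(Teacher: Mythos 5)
Your proposal is correct and follows the paper's own route: the paper proves Lemma \ref{lem 4.3.} in exactly this way, citing Lemma 3.3 of \cite{AKR22} (whose H\"older/Sobolev-embedding chain treats the trailing factor as an independent Lebesgue-norm factor, so replacing $u$ by $u-v$ changes nothing) together with the pointwise bound $\left||u|^{\sigma}u-|v|^{\sigma}v\right|\lesssim (|u|^{\sigma}+|v|^{\sigma})|u-v|$, which serves precisely the downstream purpose you identify of reducing the difference terms in the proof of Theorem \ref{thm 1.2.} to \eqref{GrindEQ__4_31_}--\eqref{GrindEQ__4_32_}. Your added sketch of the space-time H\"older steps and the use of Remark \ref{rem 4.1.} is consistent with how the cited lemma is established.
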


Now we are ready to prove Theorem \ref{thm 1.2.}.
\begin{proof}[{\bf Proof of Theorem \ref{thm 1.2.}}]
It follows from Theorem \ref{thm 1.1.} that given $u_0 \in H^{s}$, there exist $T_{\max } ,\, T_{\min } \in (0,\, \infty ]$ such that \eqref{GrindEQ__1_1_} has a unique, maximal solution $u\in C((-T_{\min } ,\;T_{\max }),H^{s})$. There exists $0<T<T_{\max } ,\, T_{\min } $ such that if $u_{0,n} \to u_0$ in $H^{s}$ and if $u_{n} $ denotes the solution of \eqref{GrindEQ__1_1_} with the initial data $u_{0,n}$, then $\left\| u _{0,n} \right\| _{H^{s} } \le 2\left\| u_0 \right\| _{H^{s} } $ for $n$ large, and we have $0<T<T_{\max } (u _{0,n} ),\;T_{\min } (u _{0,n} )$ for all sufficiently large $n$ and $u_{n} $ is bounded in $L^{p}((-T,\;T),\;H_{q}^{s})$ for any $(p,q)\in B$.
Since $u$, $u_{n} $ satisfy integral equations:
\[u(t)=e^{it\Delta^2 }u_0 -i\lambda \int _{0}^{t}e^{i(t-\tau )\Delta^2 } |x|^{-b}|u(\tau)|^{\sigma}u(\tau)d\tau,\]
\[u_{n}(t)=e^{it\Delta^2} u_{0,n} -i\lambda \int _{0}^{t}e^{i(t-\tau)\Delta^2} |x|^{-b} |u_n(\tau)|^{\sigma}u_n(\tau)d\tau ,\]
respectively, we have
\begin{equation} \label{GrindEQ__4_33_}
u_{n}(t)-u(t)=e^{it\Delta^2}(u_{0,n} -u_0)-i\lambda \int _{0}^{t}e^{i(t-\tau )\Delta^2 } |x|^{-b}(|u_n(\tau)|^{\sigma}u_n(\tau)-|u(\tau)|^{\sigma}u(\tau)) d\tau
\end{equation}

Using the standard argument (see \cite{CFH11, DYC13} and Chapter 3 or 4 of \cite{C03}), it suffices to prove that there exits $T>0$ sufficiently small such that as $n\to \infty $,
\begin{equation} \label{GrindEQ__4_34_}
u_{n} \to u ~~\textrm{in}~~ L^{p}((-T,\, T),\, H_{q}^{s} ),
\end{equation}
for every $(p,q)\in B$.
We put $I=(-T,T)$ and
$$
M:=\max_{i=\overline{1,7}}(\left\|u\right\|_{L^{p_{i}}(I, H_{q_{i}}^{s})}+\sup_{n\ge 1}{\left\|u_n\right\|_{L^{p_{i}}(I, H_{q_{i}}^{s})}}),
$$
where $(p_{i},q_{i})\in B_{0}$ are given in Lemmas \ref{lem 4.2.} and \ref{lem 4.3.}.
Using \eqref{GrindEQ__4_33_} and Lemma 2.9 (Strichartz estimates), we have
\begin{eqnarray}\begin{split} \label{GrindEQ__4_35_}
\max_{i=\overline{1,7}}\left\|u_n-u\right\|_{L^{p_{i}}(I,H_{q_{i}}^{s})}
&\lesssim \left\| u_{0,n}-u_{0} \right\| _{H^{s}}
+\left\|\chi(x)|x|^{-b}(|u|^{\sigma}u-|v|^{\sigma}v)\right\|_{L^{a_{1}'}(I,\dot{H}_{b_{1}'}^{s-\gamma_{a_{1}',b_{1}'}-4})}\\
&~+\left\|(1-\chi(x))|x|^{-b}(|u|^{\sigma}u-|v|^{\sigma}v)\right\|_{L^{a_{2}'}(I,\dot{H}_{b_{2}'}^{s-\gamma_{a_{2}',b_{2}'}-4})}\\
&~+\left\|\chi(x)|x|^{-b}(|u|^{\sigma}u-|v|^{\sigma}v)\right\|_{L^{a_{3}'}(I,L^{b_{3}'})}\\
&~+\left\|(1-\chi(x))|x|^{-b}(|u|^{\sigma}u-|v|^{\sigma}v)\right\|_{L^{a_{4}'}(I,L^{b_{4}'})},
\end{split}\end{eqnarray}
where $(a_{i}, b_{i})$ $(i=\overline{1,4})$ are given in Lemmas \ref{lem 4.2.} and \ref{lem 4.3.}.

\textbf{Case 1.} If $\sigma$ is an even integer, or if not we assume further that $\sigma\ge \tilde{\sigma}_{\star}$, it follows from \eqref{GrindEQ__4_35_}, Lemmas \ref{lem 4.2.} and \ref{lem 4.3.} that
\begin{equation}\label{GrindEQ__4_36_}
\max_{i=\overline{1,7}}\left\|u_n-u\right\|_{L^{p_{i}}(I,H_{q_{i}}^{s})} \le C \left\| u_{0,n}-u_{0} \right\| _{H^{s} }+C(2T)^{\theta}M^{\sigma}\max_{i=\overline{1,7}}\left\|u_n-u\right\|_{L^{p_{i}}(I,H_{q_{i}}^{s})},
\end{equation}
where $\theta:=\max_{k=\overline{1,5}}{\theta_{k}}$ if $|2T|\ge 1$, and $\theta:=\min_{k=\overline{1,5}}{\theta_{k}}$ if $|2T|<1$.
If we take $T>0$ such that $(2T)^{\theta}M^{\sigma }\le 1/2$, we can deduce from \eqref{GrindEQ__4_36_} that as $n\to \infty $,
\begin{equation}\label{GrindEQ__4_37_}
\max_{i=\overline{1,7}}\left\|u_n-u\right\|_{L^{p_{i}}(I,H_{q_{i}}^{s})}\lesssim\left\| u_{0,n} -u_0 \right\| _{H^{s} } \to 0.
\end{equation}
Using Lemma \ref{lem 2.9.} (Strichartz estimates), Lemma \ref{lem 4.2.}, Lemma \ref{lem 4.3.} and \eqref{GrindEQ__4_37_}, we have
$$
\left\|u_n-u\right\|_{L^{p}(I,H_{q}^{s})}\lesssim\left\| u_{0,n} -u_0 \right\| _{H^{s} } \to 0,~\textrm{as}~n\to \infty
$$
for any $(p,q)\in B$. So the solution flow is locally Lipschitz.

\textbf{Case 2.} If $\sigma$ is not an even integer satisfying $\tilde{\sigma}_{\star}>\sigma\ge\hat{\sigma}_{\star}$. Using \eqref{GrindEQ__4_35_}, Lemmas \ref{lem 4.2.} and \ref{lem 4.3.}, we have
\begin{eqnarray}\begin{split}\label{GrindEQ__4_38_}
\max_{i=\overline{1,7}}\left\|u_n-u\right\|_{L^{p_{i}}(I,H_{q_{i}}^{s})} &\le C \left\| u_{0,n}-u_{0} \right\| _{H^{s} }+C(2T)^{\theta}M^{\sigma}\max_{i=\overline{1,7}}\left\|u_n-u\right\|_{L^{p_{i}}(I,H_{q_{i}}^{s})}\\
&+C(2T)^{\tilde{\theta}}M^{\sigma}
\max_{i=1,2}\left\|u-v\right\|_{L^{\tilde{p}_{i}}(I,H_{\tilde{q}_{i}}^{s-\varepsilon_i})}^{\sigma-\hat{\sigma}_{\star}(s)+1},
\end{split}\end{eqnarray}
where $\tilde{\theta}:=\max_{k=1,2}{\tilde{\theta}_{k}}$ if $|2T|\ge 1$, and $\tilde{\theta}:=\min_{k=1,2}{\tilde{\theta}_{k}}$ if $|2T|< 1$.
Meanwhile, Theorem \ref{thm 1.1.} shows that $\left\|u-v\right\|_{L^{\tilde{p}_{i}}(I,H_{\tilde{q}_{i}}^{s-\varepsilon_i})}\to 0$ as $n\to \infty$.
Hence, using the same argument as in Case 1, we can see that
$$\left\|u_n-u\right\|_{L^{p}(I,H_{q}^{s})}\to 0~ \textrm{as}~n\to \infty,
$$
for any $(p,q)\in B$. This completes the proof.
\end{proof}



\begin{thebibliography}{35}
\bibitem{AT21}\small{L. Aloui and S. Tayachi, Local well-posedness for the inhomogeneous nonlinear Schr\"{o}dinger equation, \emph{Discrete Contin. Dyn. Syst.}, {\bf 41} (2021), 5409--5437.}
\bibitem{AK211} \small{J. An and J. Kim, Local well-posedness for the inhomogeneous nonlinear Schr\"{o}dinger equation in $H^{s}(\mathbb R^{n} )$, \emph{Nonlinear Anal. Real World Appl.}, {\bf 59} (2021), 103268.}
\bibitem{AK212} \small{J. An and J. Kim, Small data global well-posedness and scattering for the inhomogeneous nonlinear Schr\"{o}dinger equation in $H^{s}(\mathbb R^{n} )$,  \emph{Z. Anal. Anwend.}, {\bf 40} (2021), 453--475.}
\bibitem{AK23} \small{J. An and J. Kim, The Cauchy problem for the critical inhomogeneous nonlinear Schr\"{o}dinger equation in $H^{s}(\mathbb R^{n} )$,  \emph{Evol. Equ. Contr. Theor.}, {\bf 12} (2023), 1039--1055.}
\bibitem{AKC22} \small{J. An, J. Kim and K. Chae, Continuous dependence of the Cauchy problem for the inhomogeneous nonlinear Schr\"{o}dinger equation in $H^{s}(\mathbb R^{n} )$,  \emph{Discrete Contin. Dyn. Syst. Ser. B}, {\bf 27} (2022), 4143--4172.}
\bibitem{AKR22} \small{J. An, J. Kim and P. Ryu, Local well-posedness for the inhomogeneous biharmonic nonlinear Schr\"{o}dinger equation in Sobolev spaces, \emph{Z. Anal. Anwend.},  {\bf 41} (2022), 239--258.}
\bibitem{ARK23} \small{J. An, P. Ryu and J. Kim, Small data global well-posedness for the inhomogeneous biharmonic NLS in Sobolev spaces, \emph{Discrete Contin. Dyn. Syst. Ser. B}, {\bf 28} (2023), 2789--2802.}
\bibitem{C21} {\small L. Campos, Scattering of radial solutions to the inhomogeneous nonlinear Schr\"{o}dinger equation,  \emph{Nonlinear Anal.}, {\bf 202} (2021), 112118.}
\bibitem{CG21} {\small L. Campos and C. M. Guzm\'{a}n, Scattering for the non-radial inhomogenous biharmonic NLS equation, \emph{Calc. Var. Partial Differential Equations}, {\bf 61} (2022), 156.}
\bibitem{CGP22} {\small M. Cardoso, C. M. Guzm\'{a}n and A. Pastor, Global well-posedness and critical norm concentration for inhomogeneous biharmonic NLS, \emph{Monatsh. Math.}, {\bf 198} (2022), 1--29.}
\bibitem{C03} \small{T. Cazenave, \emph{Semilinear Schr\"{o}dinger Equations}, Courant Lect. Notes Math. 10, New York University, Courant Institute of Mathematical Sciences, New York; American Mathematical Society, Providence, 2003.}
\bibitem{CFH11} {\small T. Cazenave, D. Fang and Z. Han, Continuous dependence for NLS in fractional order spaces, \emph{Ann. Inst. H. Poincar\'{e} Anal. Non Lin\'{e}aire}, \textbf{28} (2011), 135--147.}
\bibitem{CW91} {\small M. Christ and M. Weinstein, Dispersion of small amplitude solutions of the generalized Korteweg--de Vries equation,  \emph{J. Funct. Anal.}, {\bf 100} (1991), 87--109.}
\bibitem{DYC13} {\small W. Dai, W. Yang and D. Cao, Continuous dependence of Cauchy problem for nonlinear Schr\"{o}dinger equation in $H^{s} $, \emph{J. Differential Equations}, \textbf{255} (2013), 2018--2064.}
\bibitem{D18B} \small{V. D. Dinh, On well-posedness, regularity and ill-posedness of the nonlinear fourth-order Schr\"{o}dinger equation,  \emph{Bull. Belg. Math. Soc. Simon Stevin}, {\bf 25} (2018), 415--437.}
\bibitem{D18I} \small{V. D. Dinh, Well-posedness of nonlinear fractional Schr\"{o}dinger and wave equations in Sobolev spaces,  \emph{Int. J. Appl. Math.}, {\bf 31} (2018), 843--525.}
\bibitem{D21} \small{V. D. Dinh, Dynamics of radial solutions for the focusing fourth-order nonlinear Schr\"{o}dinger equations,  \emph{Nonlinearity}, {\bf 34} (2021), 776--821.}
\bibitem{DK21} \small{V. D. Dinh and S. Keraani, Long time dynamics of non-radial solutions to inhomogeneous nonlinear Schr\"{o}dinger equations,  \emph{SIAM J. Math. Anal.}, {\bf 53} (2021), 4765--4811.}
\bibitem{GC07} \small{A. Guo and S. Cui, On the Cauchy problem of fourth-order nonlinear Schr\"{o}dinger equations,  \emph{Nonlinear Anal.}, {\bf 66} (2007), 2911--2930.}
\bibitem{GM21} {\small C. M. Guzm\'{a}n and J. Murphy, Scattering for the non-radial energy-critical inhomogeneous NLS,  \emph{J. Differential Equations}, {\bf 295} (2021), 187--210.}
\bibitem{GP20} {\small C. M. Guzm\'{a}n and A. Pastor, On the inhomogeneous biharmonic nonlinear Schr\"{o}dinger equation: Local, global and stability results,  \emph{Nonlinear Anal. Real World Appl.}, {\bf 56} (2020), 103174.}
\bibitem{GP22} {\small C. M. Guzm\'{a}n and A. Pastor, Some remarks on the inhomogeneous biharmonic NLS equation, \emph{Nonlinear Anal. Real World Appl.}, {\bf 67} (2022), 103643.}
\bibitem{K96} \small{V. I. Karpman, Stabilization of soliton instabilities by higher-order dispersion: Fourth order nonlinear Schr\"{o}dinger-type equations,  \emph{Phys. Rev. E}, {\bf 53} (1996), 1336--1339.}
\bibitem{KS97} \small{V. I. Karpman and A. G. Shagalov, Solitons and their stability in high dispersive systems. I. Fourth-order nonlinear Schr\"{o}dinger-type equations with power-law nonlinearities, \emph{Phys. Lett. A}, {\bf 228} (1997), 59--65.}
\bibitem{KPV93} {\small C. E. Kenig, G. Ponce and L. Vega, Well-posedness and scattering results for the generalized Korteweg-de Vries equation via the contraction principle, \emph{Comm. Pure Appl. Math.}, \textbf{46 (4)} (1993), 527--620.}
\bibitem{LZ21} \small{X. Liu and T. Zhang, The Cauchy problem for the fourth-order Schr\"{o}dinger equation,  \emph{J. Math. Phys.}, {\bf 62} (2021), 071501.}
\bibitem{LZ212} \small{X. Liu and T. Zhang, Bilinear Strichartz's type estimates in Besov spaces with application to inhomogeneous nonlinear biharmonic Schr\"{o}dinger equation,  \emph{J. Differential Equations}, {\bf 296} (2021), 335--368.}
\bibitem{MZ16} \small{C. Miao and J. Zhang, Scattering theory for the defocusing fourth-order Schr\"{o}dinger equation,  \emph{Nonlinearity}, {\bf 29} (2016), 692--736.}
\bibitem{MMZ21} \small{C. Miao, J. Murphy and J. Zheng, Scattering for the non-radial inhomogeneous NLS, \emph{Math. Res. Lett.}, {\bf 28} (2021), 1481--1504..}
\bibitem{P07} \small{B. Pausader, Global well-posedness for energy critical fourth-order Schr\"{o}dinger equations in the radial case.  \emph{Dyn. Partial Differ. Equ.}, {\bf 4} (2007), 197--225.}
\bibitem{PX13} \small{B. Pausader and S. Xia, Scattering theory for the fourth-order Schr\"{o}dinger equation in low dimensions,  \emph{Nonlinearity}, {\bf 26} (2013), 2175--2191.}
\bibitem{S21} \small{T. Saanouni, Energy scattering for radial focusing inhomogeneous bi-harmonic Schr\"{o}dinger equations,  \emph{Calc. Var. Partial Differential Equations}, {\bf 60} (2021), 113.}
\bibitem{S22} \small{T. Saanouni, Scattering for radial defocusing inhomogeneous bi-harmonic Schr\"{o}dinger equations,  \emph{Potential Anal.}, {\bf 56} (2022), 649--667.}
\bibitem{SG22} \small{T. Saanouni and R. Ghanmi, A note on the inhomogeneous fourth-order Schr\"{o}dinger equation,  \emph{J. Pseudo-Differ. Oper. Appl.}, {\bf 56} (2022), 13:56.}
\bibitem{WHHG11} \small{B. X. Wang, Z. Huo, C. Hao and Z. Guo,  \emph{Harmonic Analysis Method for Nonlinear Evolution Equations, I}, World Scientific Publishing, Hackensack, 2011.}
\end{thebibliography}
\end{document}